\author{Erik Dahlen}
\date{}
\begin{document}
\begin{titlepage}

   \begin{center}
       \vspace*{1cm}

        \large
       \textbf{On the $b$-chromatic number of star graph operators}

    \normalsize
        \textit{Combinatorics and Graph Theory}
     
        \textit{Erik Dahlen$^1$}

\vspace{0.8cm}

\begin{abstract}
A $b$-coloring is a proper coloring such that for each color class, there exists at least one vertex that is adjacent to at least one vertex in every other color class.
The $b$-chromatic number of a graph $G$ is the maximum number $k$ such that $G$ admits a $b$-coloring with $k$ colors.
This paper focuses on the $b$-chromatic number of the power graph of the Cartesian product of star graphs.  
In addition, we also study the total graph and the line graph of the Cartesian product of star graphs. 
Our main result generalizes the result shown in \cite{qn} on the b-chromatic number of the Cartesian product of two stars. We find exact values for the b-chromatic number of particular Cartesian products of complete graphs and explore the bounds of the generalized Cartesian product of complete graphs.
\end{abstract}
      \end{center}
      
       \textbf{Key words}: $b$-coloring, $b$-chromatic number, Cartesian product, star graph, graph power, graph operators, complete graph.

        \textbf{AMS subject classification (2020)}: 05C69 

\vspace{2.0cm}

\end{titlepage}

\section{Introduction}

\addtocounter{page}{1}

In this paper, we study the $b$-chromatic number of numerous graph families that stem from the star graph. All graphs in this paper are finite, simple, undirected, and connected. Denote this graph $G(V,E)$ with the vertex set $V(G)$ and the edge set $E(G)$.

For any graph $G$, a \textit{proper coloring} assigns a color to every vertex $v\in V(G)$ such that no adjacent vertices share the same color. All colors exist in a color set $C$. It is common to represent colors as integers such that $C = \{1,2,3,4,... k\}$ for some $k\in\mathbb{Z}$. We can partition the set of vertices of $G$ into disjoint color classes, $C_1,\ldots, C_k$, such that $C_i$ is the set of vertices with color $i$.

\begin{ex} Below in Figure~\ref{figure1}, is an example of a proper coloring on a path of 4 vertices, $P_4$.
\begin{figure}[h]
\centering
 \begin{tikzpicture}
	\begin{pgfonlayer}{nodelayer}
		\node [style=red] (0) at (-4, 0) {};
		\node [style=blue] (1) at (-3, 0) {};
		\node [style=red] (2) at (-2, 0) {};
		\node [style=blue] (3) at (-1, 0) {};
	\end{pgfonlayer}
	\begin{pgfonlayer}{edgelayer}
		\draw (0) to (1);
		\draw (1) to (2);
		\draw (2) to (3);
	\end{pgfonlayer}
    \end{tikzpicture}
    \caption{Proper coloring of $P_4$.}
    \label{figure1}
\end{figure}
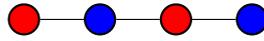

\end{ex}

Something of immediate interest is the smallest number of colors for any proper coloring of a graph $G$. The number of colors satisfying these conditions is called the \textit{chromatic number} of $G$, denoted $\chi(G)$. On the other hand, the $b$-chromatic number, $\varphi(G)$, introduced by Irving and Manlove in 1997\cite{Manlove}, can intuitively be used to denote the \textit{maximum} number of colors used to color a graph such that each color class is adjacent to all other colors. We define the $b$-chromatic number precisely below,

\begin{defn}\label{Definition 2} 
In a proper coloring of $G$ with $k$ colors, a vertex $v$ with color $i$ is called a \textit{$b$-vertex} if for each color $j\neq i$, $v$ is adjacent to at least one vertex of color $j$. A proper coloring is a \textit{$b$-coloring} if there exists a $b$-vertex for each color $i\in[k]$.
The \textit{b-chromatic number}, denoted $\varphi(G)$, is the largest integer $k$ such that there exists a $b$-coloring with $k$ colors.

\end{defn}

\begin{ex}

Below is Figure~\ref{figure2}, which shows the \textit{b-coloring} of the graph $S_3\square S_3$, using 5 colors. In Theorem \ref{Theorem_box_prod}, we will show that this must be its maximal $b$-coloring. 

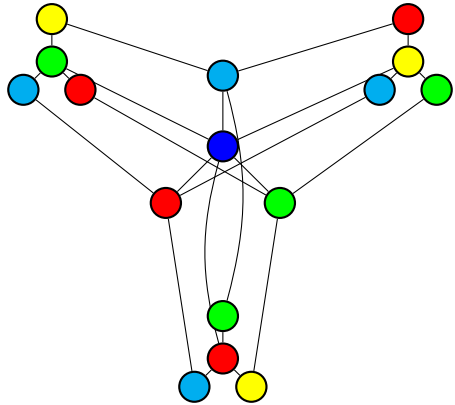
\begin{figure}[h]
    \centering
     \begin{tikzpicture}[scale=0.75]
	\begin{pgfonlayer}{nodelayer}
		\node [style=blue] (1) at (0, 0) {};
		\node [style=aqua] (4) at (0, 1.25) {};
		\node [style=gn] (5) at (1, -1) {};
		\node [style=gn] (6) at (-3, 1.5) {};
		\node [style=yn] (7) at (-3, 2.25) {};
		\node [style=aqua] (8) at (-3.5, 1) {};
		\node [style=red] (9) at (-2.5, 1) {};
		\node [style=red] (10) at (3.25, 2.25) {};
		\node [style=aqua] (11) at (-0.5, -4.25) {};
		\node [style=aqua] (12) at (2.75, 1) {};
		\node [style=gn] (13) at (3.75, 1) {};
		\node [style=yn] (14) at (3.25, 1.5) {};
		\node [style=red] (15) at (-1, -1) {};
		\node [style=red] (16) at (0, -3.75) {};
		\node [style=gn] (17) at (0, -3) {};
		\node [style=yn] (18) at (0.5, -4.25) {};
	\end{pgfonlayer}
	\begin{pgfonlayer}{edgelayer}
		\draw (6) to (9);
		\draw (6) to (7);
		\draw (6) to (8);
		\draw (14) to (12);
		\draw (14) to (13);
		\draw (14) to (10);
		\draw (17) to (16);
		\draw (16) to (18);
		\draw (16) to (11);
		\draw (1) to (15);
		\draw (1) to (5);
		\draw (1) to (4);
		\draw (6) to (1);
		\draw (14) to (1);
		\draw [bend left=15] (16) to (1);
		\draw (7) to (4);
		\draw (9) to (5);
		\draw (8) to (15);
		\draw (10) to (4);
		\draw (12) to (15);
		\draw (13) to (5);
		\draw [bend right=15] (17) to (4);
		\draw (18) to (5);
		\draw (11) to (15);
	\end{pgfonlayer}
\end{tikzpicture}
\caption{$b$-coloring of $G$ such that $\varphi(G)=5$}
\label{figure2}
\end{figure}

\end{ex}

A common technique for finding the $b$-chromatic number of a particular graph $G$ is to calculate bounds on $\varphi(G)$. Well-known lower bounds of the $b$-chromatic number are the largest complete sub-graph in $G$, known as the clique number $\omega(G)$, and the chromatic number. The chromatic number is always greater than or equal to the clique number; hence, the chromatic number is generally used as the lower bound of $\varphi(G)$.
The general upper-bound used throughout this paper and in other works is the $m$-degree of $G$, defined as the size of the largest subset $S\subseteq V(G)$ such that each vertex has a degree of at least $|S|-1$. The $m$-degree can be used to find the \textit{feasible} number of colors that could be used to form a $b$-coloring.

In this paper, we study the $b$-chromatic number of star graphs, products of stars, and the power graph of star graphs (see Definitions \ref{boxproddef} and \ref{powergraphdef}). We write $S_n$ for the star graph with a central vertex $v_0$, which is adjacent to $n$ other vertices such that none of the $n$ vertices are adjacent to any of the other $n-1$ vertices. To avoid confusion, we will always refer to $v_0$ as the central vertex and $V(S_n)\setminus \{v_0\}$ as non-central vertices. The following is our first main result.

\begin{thm}[Theorem 4.2]
 \textit{Let $G$ be the $k$-th graph power of the Cartesian product of two graphs $S_n$ and $S_m$, denoted $(S_n\square S_m)^k$, where $n\ge m$. Then the following are true of $\varphi(G)$,}

    \begin{enumerate}
        \item \textit{If $k=1$, then $\varphi(G)= m+2$}

        \item \textit{If $k=2$, then $\varphi(G) = \begin{cases}
            m+n+1, & \textit{If $n>m$}\\
            2n+2, & \textit{If $n=m$}
        \end{cases}$}

        \item \textit{If $k=3$ and $n\ge m$, then;}
        \begin{enumerate}
            \item If $m\leq n< m(m-1)$, then $2n+m+1\leq \varphi(G)\leq m^2+n+1$.
            \item If $n\ge m(m-1)$, then $\varphi(G)=2n+m+1$.
        \end{enumerate}

        \item \textit{If $k\ge4$, then $\varphi(G)= nm+n+m+1$}
    \end{enumerate}
    
\end{thm}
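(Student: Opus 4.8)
The plan is to exploit the additive distance formula in a Cartesian product, $d_{S_n \square S_m}((i,j),(i',j')) = d_{S_n}(i,i') + d_{S_m}(j,j')$, together with the fact that each star has diameter $2$, so that $S_n \square S_m$ has diameter $4$. Classifying the $(n+1)(m+1)$ vertices as the center $(0,0)$, the $n+m$ axis vertices, and the $nm$ interior vertices, I can read off, for each fixed $k$, exactly which pairs lie at distance $\le k$ and hence describe the power $(S_n \square S_m)^k$ explicitly. This immediately settles $k \ge 4$: since the diameter is $4$, the graph is complete on $(n+1)(m+1) = nm+n+m+1$ vertices, whence $\varphi(G) = \chi(G) = nm+n+m+1$.

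For the remaining powers the strategy is the standard two-sided one: the chromatic number (or clique number) and an explicit $b$-coloring give lower bounds, while the $m$-degree and a refined count of candidate $b$-vertices give upper bounds. For $k=1$ the interior vertices have degree $2$ while the axis and central vertices carry the large degrees, and a direct computation of the degree sequence gives $m$-degree $=m+2$; pairing this with the construction generalizing \cite{qn} yields $\varphi(G)=m+2$. For $k=2$ I would first note that the central and axis vertices form a clique of size $n+m+1$, giving $\varphi(G)\ge \omega(G)=n+m+1$, while the degree sequence gives $m$-degree $=n+m+2$. When $n=m$ the extra symmetry of the interior lets a single interior vertex supply the last $b$-vertex, realizing $2n+2=n+m+2$. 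The subtle point is $n>m$, where the value $n+m+2$ must be excluded: in the second power only the center is universal, while an axis or interior vertex witnesses only a restricted set of colors, so a careful accounting of how many colors each candidate $b$-vertex can see rules out the top value and leaves $\varphi(G)=n+m+1$.

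The heart of the theorem is $k=3$. The decisive observation is that the interior vertices, two of which are now adjacent exactly when they share a coordinate, induce the rook's graph $K_n \square K_m$, while the center and the $n+m$ axis vertices form a clique of size $n+m+1$ that is completely joined to the interior; thus $G = K_{n+m+1} \vee (K_n \square K_m)$. I would prove the clean reduction
\[
\varphi(G) = (n+m+1) + \varphi(K_n \square K_m).
\]
Indeed, in any $b$-coloring the join forces the clique to use $n+m+1$ private colors, each clique vertex is automatically a $b$-vertex, and the remaining colors restrict to a $b$-coloring of the interior rook's graph; conversely any $b$-coloring of $K_n \square K_m$ extends by $n+m+1$ fresh colors on the clique. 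In particular $\chi(G)=(n+m+1)+n=2n+m+1$ furnishes the universal lower bound, since coloring each grid column by a full permutation of $n$ colors makes every interior vertex a $b$-vertex.

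It remains to bound $\varphi(K_n \square K_m)$ on the $n \times m$ grid, and this is the main obstacle. A cell $(i,j)$ is a $b$-vertex of the interior coloring precisely when the colors of row $i$ together with those of column $j$ exhaust all interior colors; since a row carries $m$ colors and a column $n$, any single cross covers at most $n+m-1$ colors. Upgrading this to the sharp threshold requires a global count over all colors at once: tracking, for each color, which rows and columns its $b$-vertex must meet, one shows that once $n \ge m(m-1)$ the available row capacity is too small to introduce any color beyond a single column's worth, forcing $\varphi(K_n \square K_m)=n$ and hence $\varphi(G)=2n+m+1$; for $m \le n < m(m-1)$ the same count yields only $\varphi(K_n \square K_m) \le m(m-1)$, i.e. $\varphi(G)\le m^2+n+1$, which with the lower bound $2n+m+1$ gives the stated interval. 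I expect the delicate part to be exactly this counting---pinning down the extremal distribution of colors on the grid encoded by the threshold $m(m-1)$---and it is precisely why the middle regime yields only bounds rather than an exact value.
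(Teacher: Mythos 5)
Your proposal follows the same architecture as the paper's proof: the same four-case split, the diameter argument for $k\ge 4$ (the graph power is complete on $(n+1)(m+1)$ vertices), the clique-plus-$m$-degree squeeze for $k=1,2$, and, for $k=3$, the identification of the third power as the join of the clique on the $n+m+1$ central and axis vertices with the rook graph $K_n\square K_m$ induced on the interior vertices. One place where you are actually sharper than the paper is the $k=3$ reduction: you state and prove the exact identity $\varphi(G)=(n+m+1)+\varphi(K_n\square K_m)$ in both directions (the join makes every clique vertex universal, so its colors are private and each clique vertex is automatically a $b$-vertex, and the restriction of any $b$-coloring to the interior is a $b$-coloring of the rook graph), whereas the paper argues this reduction only informally before applying the rook-graph bounds; your Latin-rectangle coloring of the grid also cleanly realizes the lower bound $2n+m+1$.

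Two steps, however, are left as sketches where concrete arguments are required. For $k=2$ with $n>m$, your ``careful accounting of how many colors each candidate $b$-vertex can see'' needs to be the following injection argument, which the paper carries out: in the square, the $n$ leaves of each inner star $(S_n)_{v_j}$ are pairwise at distance two and hence form a clique, so the hypothetical color $n+m+2$ can appear at most once per star; since the axis vertex $(w_i)_{v_0}$ sees that color only on leaves $(w_i)_{v_j}$ in its own row, the $n$ axis vertices force an injection from the $n$ rows into the $m$ stars, which is impossible when $n>m$. More seriously, for $k=3$ you propose to reprove the rook-graph bounds ($n\le\varphi(K_n\square K_m)\le m(m-1)$ when $m\le n<m(m-1)$, and $\varphi(K_n\square K_m)=n$ when $n\ge m(m-1)$) via a global count whose crux you describe only as ``the available row capacity is too small.'' This is a genuinely nontrivial result that the paper deliberately does not prove, citing it instead from \cite{qn} (Lemma~\ref{kouider_lemma}); as written, your one-sentence sketch of the threshold $m(m-1)$ is not a proof, and unless you intend to carry out that counting in full, you should do as the paper does and cite the known bounds, after which the rest of your argument closes correctly.
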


This theorem is motivated by the study of the b-chromatic number of non-regular graph powers.
The $b$-chromatic number has been computed for trees, regular graphs, cubic graphs, and Cartesian products; under specific conditions \cite{Manlove, Chuan, Klavzar, Jakovac, Mekkia, Maffray, qn}. In particular, power graphs provide specific recursive relation structures seen in computer networks and are thus well studied in theoretical computer science \cite{Brice}. Furthermore, Cartesian products provide us with generalizations to higher dimensions. 
For information on simple power graphs and regular power graphs, consider \cite{Brice} and Definition \ref{powergraphdef}. In addition to stars, this paper is interested in the \textit{rook graph} or the Cartesian product of two complete graphs. Bounds for the $b$-chromatic number of the rook graph are known, as shown in \cite{qn, Omoomi}, but in Theorem \ref{thm4.2} we give explicit computations under certain conditions and tighter bounds for certain cases of the rook graph.

Our next main result is a generalization of \cite[Proposition~7, part (a)]{qn}.

\begin{thm}[Theorem 3.1]
     \textit{For two graphs $S_n$ and $S_m$, where $n\ge m$, $\varphi(S_n\square S_m)=m+2$.}
\end{thm}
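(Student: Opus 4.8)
The plan is to prove the two inequalities $\varphi(S_n\square S_m)\le m+2$ and $\varphi(S_n\square S_m)\ge m+2$ separately. First I would fix coordinates: write $a_0$ for the centre of $S_n$ and $a_1,\dots,a_n$ for its leaves, and $b_0,b_1,\dots,b_m$ likewise for $S_m$, so the vertices of the product are the pairs $(a_i,b_j)$. A direct reading of the Cartesian-product adjacency shows there are exactly four degree types: the corner $(a_0,b_0)$ has degree $n+m$; the $m$ ``top-row'' vertices $(a_0,b_j)$ with $j\ge 1$ have degree $n+1$; the $n$ ``left-column'' vertices $(a_i,b_0)$ with $i\ge 1$ have degree $m+1$; and the $nm$ interior vertices $(a_i,b_j)$ with $i,j\ge 1$ have degree $2$.

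For the upper bound I would read off the $m$-degree from this list. Using $n\ge m$ and ordering degrees non-increasingly, the decisive entry sits at position $m+2$, which is a left-column degree $m+1$: there are $1+m+n\ge m+2$ vertices of degree at least $m+1$, but at most $m+1$ vertices of degree at least $m+2$ (the corner always, and the top row only when $n>m$). Hence the $m$-degree equals $m+2$, and since the $m$-degree bounds the $b$-chromatic number from above, $\varphi\le m+2$. Note that the clique and chromatic numbers are both $2$ here, so they give nothing useful; the matching lower bound must be produced by an explicit construction.

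For the lower bound I would exhibit a $b$-colouring with colour set $\{1,\dots,m+2\}$ whose $b$-vertices are the corner, the $m$ top-row vertices, and one distinguished left-column vertex. Concretely, colour the corner $1$, each top-row vertex $(a_0,b_j)$ with $j+1$, and $(a_1,b_0)$ with $m+2$; among these $m+2$ vertices the only adjacencies are corner-to-each, so distinct colours make this partial colouring proper, and the corner already sees $2,\dots,m+2$. The essential observation is that every interior vertex is adjacent only to its unique top neighbour and its unique left neighbour, so interior colours are otherwise free. What remains is to fill the interior (and the undistinguished left-column vertices) so that, for each $j$, column $j$ realises the palette $\{2,\dots,m+2\}\setminus\{j+1\}$, making each top-row vertex a $b$-vertex, and row $1$ realises $\{2,\dots,m+1\}$, making $(a_1,b_0)$ a $b$-vertex, all while avoiding the top colour $j+1$ in column $j$ and the left colour in each row.

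The main obstacle is this simultaneous packing. I would attempt it with a circulant pattern, assigning interior vertex $(a_i,b_j)$ the colour $\psi\bigl((i+j)\bmod(m+1)\bigr)$ for a suitable bijection $\psi\colon\{0,\dots,m\}\to\{2,\dots,m+2\}$, so that over one period each column already contains every colour and each row is a rainbow; one then recolours the $O(1)$ cells per column where the pattern collides with the top colour $j+1$. The genuinely tight case is $n=m$, where the interior is square and there is no spare row to absorb these corrections, so the cyclic shift must be chosen so that the single colliding cell in each column can be re-routed to exactly the one missing colour without clashing with the left column; I expect the boundary cell $(a_m,b_m)$ to require separate attention. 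For $n>m$ the extra rows provide slack and the verification of all $b$-vertices becomes routine once the $n=m$ pattern is settled.
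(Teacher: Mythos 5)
Your proposal is correct and follows essentially the same route as the paper: its Lemma 1 computes $m(S_n\square S_m)=m+2$ from the same four degree classes (your version even checks the upper direction more explicitly than the paper does), and the paper's coloring algorithm uses exactly your $b$-vertex set --- the corner, the $m$ top-row vertices, and one left-column vertex $(a_1,b_0)$ carrying the extra color $m+2$ --- with your row-1 shift $j\mapsto(j\bmod m)+1$ appearing verbatim as its step 3. The one step you leave unsettled, the simultaneous packing, is easier than you anticipate: interior vertices have degree $2$ and the rows and columns of the interior are independent sets, so once row 1 is fixed, the remaining $m-1$ colors of column $j$'s palette $\{2,\dots,m+2\}\setminus\{j+1\}$ can be placed in rows $2,\dots,m$ in any order whatsoever (excess cells take color $1$, and the undistinguished left-column vertices are colored last, avoiding color $1$ and their own row), which is precisely the paper's greedy step 4 --- no circulant pattern, no correction mechanism, and no special treatment of the $n=m$ case is needed.
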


Using Theorem \ref{Theorem_box_prod}, we arrive at the $b$-chromatic number of the line graph and total graph of $S_n\square S_m$.

\begin{thm}[Theorem 3.2]
\textit{For two graphs $S_n$ and $S_m$,} $\varphi(L(S_n\square S_m))=m+n$
\end{thm}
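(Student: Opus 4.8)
The plan is to sandwich $\varphi(L(G))$ between a clique lower bound and the $m$-degree upper bound, where $G=S_n\square S_m$ with $n\ge m$, and to show both equal $n+m$. Since a vertex of $L(G)$ is an edge of $G$, with two such vertices adjacent precisely when the corresponding edges share an endpoint, the $L(G)$-degree of an edge $e=uv$ is $\deg_G(u)+\deg_G(v)-2$. Thus the whole computation reduces to bookkeeping on the degree sequence of $G$, which I would record first.

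Writing $a_0$ and $b_0$ for the two central vertices, $G$ has four vertex types: the corner $(a_0,b_0)$ of degree $n+m$; the $m$ vertices $(a_0,b_j)$ of degree $n+1$; the $n$ vertices $(a_i,b_0)$ of degree $m+1$; and the $nm$ leaf--leaf vertices of degree $2$. Converting each of the $2nm+n+m$ edges of $G$ into its $L(G)$-degree yields four classes: the $m$ edges $(a_0,b_0)(a_0,b_j)$ of degree $2n+m-1$; the $n$ edges $(a_0,b_0)(a_i,b_0)$ of degree $n+2m-1$; the $nm$ edges $(a_0,b_j)(a_i,b_j)$ of degree $n+1$; and the $nm$ edges $(a_i,b_0)(a_i,b_j)$ of degree $m+1$. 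Because $n\ge m\ge 1$ these four values are already in non-increasing order.

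For the upper bound I would compute the $m$-degree, assuming $m\ge 2$. First, $t=n+m$ is feasible: the $m+n$ vertices of the first two classes all have degree at least $n+m-1$ (for $m=2$ the third class qualifies as well), so at least $n+m$ vertices meet the threshold $n+m-1$. Second, $t=n+m+1$ is infeasible: a vertex of degree at least $n+m$ must lie in the first two classes, since the third and fourth classes have degrees $n+1<n+m$ and $m+1<n+m$ (using $m\ge 2$ and $n\ge m\ge 2$); there are only $m+n$ such vertices, short of the $n+m+1$ required. Hence the $m$-degree of $L(G)$ is exactly $n+m$, giving $\varphi(L(G))\le n+m$.

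For the matching lower bound I would exhibit a clique: the $n+m$ edges incident with the centre $(a_0,b_0)$ pairwise share that vertex and so form a clique of size $n+m$ in $L(G)$, whence $\omega(L(G))\ge n+m$. The standard chain $\varphi\ge\chi\ge\omega$ then gives $\varphi(L(G))\ge n+m$, and combining the two bounds proves $\varphi(L(G))=n+m$. The main obstacle is the $m$-degree step: one must confirm that the abundant low-degree vertices (the $2nm$ edges of the last two classes) never raise the feasible threshold past $n+m$, and handle small parameters with care. In particular, for $m=1$ the $m$-degree equals $n+2$, so this argument is not tight there, and the case $m=1$, if it is to be included, needs a separate argument ruling out a $b$-coloring with $n+2$ colors.
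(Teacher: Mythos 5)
Your proposal is correct and takes essentially the same route as the paper: the paper's Lemma~2 computes $m\bigl(L(S_n\square S_m)\bigr)=m+n$ from the same degree classes (the $m$ edges $\{(w_0)_{v_0},(w_0)_{v_j}\}$ of degree $2n+m-1$ and the $n$ edges $\{(w_0)_{v_0},(w_i)_{v_0}\}$ of degree $2m+n-1$, with all remaining edges of lower degree), and the lower bound is the same clique of the $m+n$ edges through $(w_0)_{v_0}$ combined with $\omega\le\chi\le\varphi\le m(G)$. Your caveat about $m=1$ is a genuine improvement in care: there the $nm$ edges $\{(w_0)_{v_j},(w_i)_{v_j}\}$ have line-graph degree $n+1=m+n$, so the $m$-degree is $n+2$ rather than $m+n$, a boundary case the paper's lemma silently glosses over and which, as you say, would require a separate argument ruling out $n+2$ colors.
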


\begin{thm}[Theorem 3.3]
\label{thm:line_tot}
\textit{For $n\ge m\ge 3$, It follows that,} $$\varphi(T(S_n\square S_m))=\begin{cases}
    2m+n+1, & \textit{If $n> 2(m-1)$}\\
    2n+3 & \textit{If $n\leq 2(m-1)$}
\end{cases}$$
\end{thm}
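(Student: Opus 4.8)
The plan is to determine $\varphi(T(S_n\square S_m))$ by trapping it between the $m$-degree upper bound and a matching lower bound supplied by an explicit $b$-coloring, exactly as in the proofs of Theorems~\ref{Theorem_box_prod} and~\ref{thm4.2}. Write $a_0$ for the center of $S_n$ with leaves $a_1,\dots,a_n$ and $b_0$ for the center of $S_m$ with leaves $b_1,\dots,b_m$. In $G=S_n\square S_m$ I distinguish the center $O=(a_0,b_0)$, the $n$ row-centers $R_i=(a_i,b_0)$, the $m$ column-centers $K_j=(a_0,b_j)$, and the $nm$ corners $P_{ij}=(a_i,b_j)$, whose $G$-degrees are $n+m$, $m+1$, $n+1$, and $2$; the edges are the spokes $e^R_i=OR_i$ and $e^K_j=OK_j$ together with the corner edges $f_{ij}=R_iP_{ij}$ and $g_{ij}=K_jP_{ij}$. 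Using the standard total-graph degree rule (a vertex $v$ has degree $2\deg_G v$ and an edge $xy$ has degree $\deg_G x+\deg_G y$ in $T(G)$), the vertices of $T(G)$ acquire degrees $2n+2m$, $2m+2$, $2n+2$, and $4$, while the edges acquire degrees $n+2m+1$ ($e^R_i$), $2n+m+1$ ($e^K_j$), $m+3$ ($f_{ij}$), and $n+3$ ($g_{ij}$).

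For the upper bound I would compute the $m$-degree directly from this list. Since $n\ge m\ge 3$, the only families near the cutoff are $O$ (one vertex), the $m$ edges $e^K_j$, the $n$ edges $e^R_i$, and the $m$ column-centers $K_j$; the row-centers (degree $2m+2$) and all corner objects are too small to matter. The decisive comparison is between the column-center degree $2n+2$ and the threshold, i.e.\ between $n$ and $2(m-1)$, which is exactly where the $K_j$ either clear the bar or not and hence is the source of the case split. When $n>2(m-1)$ a short count shows there are precisely $2m+n+1$ vertices of degree $\ge 2m+n$, and these same vertices are exactly those of degree $\ge 2m+n+1$; since reaching $k=2m+n+2$ would require $2m+n+2$ vertices of degree $\ge 2m+n+1$, the $m$-degree equals $2m+n+1$. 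When $n\le 2(m-1)$ the column-centers drop to the lower threshold while the row-spokes $e^R_i$ (degree $n+2m+1\ge 2n+3$) remain above it, and the analogous count yields $m$-degree $=2n+3$. As $\varphi\le m$-degree, both upper bounds follow.

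For the lower bound I would exhibit a $b$-coloring on the claimed number of colors, taking the $b$-vertices to be the high-degree vertices above: the center $O$, all spokes $e^R_i,e^K_j$, and either all of the column-centers $K_j$ (when $n>2(m-1)$) or a suitable subset of exactly $n-m+2$ of them (when $n\le 2(m-1)$, so the total is $1+n+m+(n-m+2)=2n+3$). The center $O$ is adjacent in $T(G)$ to every spoke and every $K_j$, so it sees all other colors automatically; each spoke lies in a large clique around $O$ and needs only to pick up the $K_j$-colors, which I would feed to $e^R_i$ through its incident corner edges $f_{ij}$. The remaining freedom is in the corner vertices $P_{ij}$ and corner edges $f_{ij},g_{ij}$, which I would color by a structured (Latin-square / cyclic) scheme so that every selected column-center and every spoke $e^K_j$ receives each missing color through its incident $g_{ij}$'s and $P_{ij}$'s while the coloring stays proper.

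The main obstacle is precisely this corner-coloring, and within it the column-centers. In the case $n\le 2(m-1)$ a chosen $K_j$ has degree exactly $2n+2=k-1$, so it has zero slack: its entire neighborhood $\{O,\,e^K_j,\,P_{1j},\dots,P_{nj},\,g_{1j},\dots,g_{nj}\}$ must be rainbow, carrying all $k-1$ colors distinct from its own. Arranging the corner colors so that this rainbow condition holds simultaneously for every promoted $K_j$, while respecting properness along the shared corner edges (each $g_{ij}$ borders $e^K_j$, $K_j$, $P_{ij}$, $f_{ij}$, and the other $g_{i'j}$) and still supplying the spokes their required $K$-colors, is the delicate heart of the argument; verifying it is where the hypotheses $m\ge 3$ and $n\ge m$, together with the exact count $n-m+2$ of column-centers one must promote, are genuinely used.
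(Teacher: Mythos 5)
Your upper-bound half is essentially the paper's own argument: your classification of $V(T(S_n\square S_m))$ into the center $O$, the two families of spokes, the row- and column-centers, and the corner vertices and edges, with degrees $2n+2m$, $2n+m+1$, $n+2m+1$, $2n+2$, $2m+2$, $n+3$, $m+3$, $4$, is exactly the paper's partition $X_0,\dots,X_7$, and your counting in both cases (including locating the case split at the comparison $2n+2$ versus $2m+n+1$, i.e.\ $n$ versus $2(m-1)$) reproduces the paper's $m$-degree lemma. That half is correct and complete in outline.

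The genuine gap is the lower bound: you describe \emph{what} a $b$-coloring must achieve but never construct one, and you say so yourself (``a structured (Latin-square / cyclic) scheme \dots is the delicate heart of the argument''). This is not a detail one may defer: degree counts alone never certify a $b$-coloring --- indeed Theorem~\ref{thm4.2} shows $\varphi$ can fall strictly below $m(G)$ for closely related graphs, so the matching construction is the substantive half of the proof. The paper resolves precisely the zero-slack rainbow problem you flag, via the bijection $\phi_j\bigl(\{w_0,w_i\}_{v_0}\bigr)=(w_i)_{v_j}$: transporting the $n$ row-spoke colors $2,\dots,n+1$ onto the corner vertices of every column makes each column-center automatically adjacent to those $n$ colors, after which the exact counts close up --- at a promoted center the $n$ uncolored inner-star edges absorb exactly the $(m-1)+(n-m+1)=n$ remaining missing colors, at a non-promoted center the $m-2$ remaining inner-star edges together with the (uncolored) spoke absorb the $m-1$ missing center colors, and the greedy completion is licensed by the bound $2m+2<2n+3$ on the largest uncolored neighborhood. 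Two further remarks on your variant selection in the second case (promoting all $m$ column spokes and only $n-m+2$ column-centers, rather than the paper's all $m$ centers and $n-m+2$ spokes): it is plausibly workable and even reduces the number of zero-slack $b$-vertices from $m$ to $n-m+2$, but it carries the same unverified obligations, and you should make explicit that its feasibility for the row spokes --- which under your selection are missing the $n-m+2$ promoted-center colors and have only the $m$ corner edges $f_{ij}$ (plus the vertex $R_i$) available to receive them --- rests on $n-m+2\le m$, which is exactly the case hypothesis $n\le 2(m-1)$; this mirrors the paper's use of $m\ge n-m+2$ when choosing which spokes to promote. Until such a scheme is written down and checked for properness along the shared edges $g_{ij}$, the theorem is only half proved.
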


In Section~\ref{sec:prelims}, we will cover the necessary definitions of the $b$-chromatic number and the propositions that come directly from the definitions. In Section~\ref{sec:products}, we will cover the main theorems of this paper, exploring the Cartesian Product of two stars, the total graph of the Cartesian product of two stars, and the line graph of the Cartesian product of two stars. In Section~\ref{sec:powers}, we explore some results on the graph power of the Cartesian product of two stars and other corollaries.

\section{Preliminaries}\label{sec:prelims}

In this section, we will describe the definitions and propositions used throughout the paper.


A vertex $v\sim u$ if and only if, $\{u,v\}\in E(G)$. Furthermore, the set of neighbors of a given vertex $v\in G$ can be represented as $N(v) = \{u\in V(G): u\sim v\}$. The \textit{degree} of any $v\in G$ is therefore $|N(v)|$. Given a proper coloring of the graph $G$, we may often refer to a \textit{color} $c$ being in the neighborhood of a vertex, $v$, or a color $c$ being adjacent to $v$, as a shorthand of saying that a vertex of the color $c$ is adjacent to $v$.

We can observe that if $\varphi(G)=n$, then there must exist a sequence of vertices $v_1,v_2,\ldots,v_n$, or the $b$-vertices, such that each one of these vertices has a degree of \textit{at least} $n-1$.

\begin{defn} 

    Let $G$ be a graph with the vertices $v_1,v_2,\ldots, v_n$, such that $deg(v_1)\ge deg(v_2)\ge\dots\ge deg(v_n)$.
    The \textit{m-degree} of $G$, denoted $m(G)$ is defined by, \begin{equation*}
        m(G) := max\{i: deg(x_i)\ge i-1\}
    \end{equation*}
\end{defn}

The following useful proposition is immediate.

\begin{prop}\label{squeeze}
    $\omega(G)\leq\chi(G)\le\varphi(G)\le m(G)$ \cite{Manlove}
\end{prop}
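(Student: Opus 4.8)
The plan is to split the statement into its three constituent inequalities and prove each one independently, since every comparison is between adjacent quantities in the chain. The two outer inequalities, $\omega(G)\le\chi(G)$ and $\varphi(G)\le m(G)$, follow almost directly from the relevant definitions; the genuinely substantive step is the middle inequality $\chi(G)\le\varphi(G)$.

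For $\omega(G)\le\chi(G)$, I would take a clique $K$ of size $\omega(G)$ and observe that its vertices are pairwise adjacent, so any proper coloring must assign them pairwise distinct colors; hence at least $\omega(G)$ colors are required and $\chi(G)\ge\omega(G)$. For $\varphi(G)\le m(G)$, I would use the observation already recorded just before the definition of the $m$-degree: if $\varphi(G)=k$, a $b$-coloring with $k$ colors yields $k$ $b$-vertices $v_1,\dots,v_k$, one per color, and each $b$-vertex is by definition adjacent to a vertex of each of the other $k-1$ colors, so $\deg(v_i)\ge k-1$. Thus $G$ has at least $k$ vertices of degree at least $k-1$; ordering the degrees nonincreasingly gives $\deg(x_k)\ge k-1$, and the definition of $m(G)$ then forces $m(G)\ge k=\varphi(G)$.

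The main obstacle is $\chi(G)\le\varphi(G)$, which I would establish by proving that \emph{any} proper coloring using exactly $\chi(G)$ colors is automatically a $b$-coloring. Suppose toward a contradiction that an optimal coloring with classes $C_1,\dots,C_{\chi(G)}$ is not a $b$-coloring. Then some class, say $C_i$, contains no $b$-vertex, so every $v\in C_i$ misses at least one color in its neighborhood. I would recolor each such $v$ with a color absent from $N(v)$, eliminating the class $C_i$ and producing a proper coloring with only $\chi(G)-1$ colors, contradicting the minimality of $\chi(G)$. Since the optimal coloring is therefore a $b$-coloring with $\chi(G)$ colors, $\varphi(G)\ge\chi(G)$.

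The one point requiring care in this last step is that the recoloring stays proper. This is clean because a color class is an independent set: no two vertices of $C_i$ are adjacent, so the recolorings do not interfere with one another, and each $v\in C_i$ is reassigned a color that by construction appears on none of its neighbors. Making this simultaneity argument explicit is the only place where the proof must be stated with precision; everything else reduces to unwinding the definitions of $\omega$, $\chi$, $\varphi$, and $m(G)$.
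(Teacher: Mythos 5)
Your proof is correct and complete. Note that the paper does not actually prove this proposition---it only cites Irving and Manlove---so there is no in-paper argument to compare against; your write-up supplies exactly the reasoning the paper leaves implicit. Your three steps are the standard ones from the cited source: $\omega(G)\le\chi(G)$ because a clique forces pairwise distinct colors; $\varphi(G)\le m(G)$ by the degree count on the $k$ distinct $b$-vertices (one per color class, hence $k$ vertices of degree at least $k-1$, so $\deg(x_k)\ge k-1$ in the sorted sequence), which is precisely the observation the paper records just before its definition of the $m$-degree; and $\chi(G)\le\varphi(G)$ via the recoloring argument showing every proper coloring with exactly $\chi(G)$ colors is already a $b$-coloring. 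You correctly identify and dispatch the one genuinely delicate point in that last step: since $C_i$ is an independent set, no neighbor of any $v\in C_i$ lies in $C_i$, so the simultaneous recoloring leaves every relevant neighborhood's color set unchanged and each reassigned color (which is necessarily different from $i$, by the definition of a $b$-vertex) remains absent from $N(v)$; the class $C_i$ is thereby emptied, contradicting the minimality of $\chi(G)$. One could phrase the conclusion slightly more directly---rather than a contradiction, the recoloring shows any non-$b$-coloring with $k$ colors yields a proper coloring with $k-1$ colors, so iterating from any $\chi(G)$-coloring must terminate immediately---but this is cosmetic; your argument as stated is sound.
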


By this definition, $\varphi(G)\leq m(G)$. For many graphs, the $b$-chromatic number will either be equal to $m(G)$ or $m(G)-1$.

\begin{prop}
    \textit{Let $G=S_n$. Then,}
    \begin{equation*}
        \varphi(G)=\begin{cases}
            1 & \textit{, if $n=0$}\\
            2 & \textit{, if $n>0$}
        \end{cases}
    \end{equation*}
\end{prop}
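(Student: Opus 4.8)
The plan is to separate the degenerate case $n=0$ from the main case $n>0$, and in the latter to pin down $\varphi(S_n)$ exactly by squeezing it between its chromatic number and its $m$-degree via Proposition~\ref{squeeze}.

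First I would dispose of $n=0$. Here $S_0$ is a single isolated vertex $v_0$ with no neighbors, so any proper coloring uses exactly one color. That sole vertex is vacuously a $b$-vertex, since there are no other color classes for it to meet, and hence $\varphi(S_0)=1$.

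For the main case $n>0$ I would establish the two inequalities that together force $\varphi(S_n)=2$. For the lower bound, observe that $S_n$ contains at least one edge and is bipartite, with $v_0$ on one side and the $n$ non-central vertices on the other, so $\chi(S_n)=2$; Proposition~\ref{squeeze} then gives $\varphi(S_n)\ge\chi(S_n)=2$. For the upper bound I would read off the $m$-degree directly from the degree sequence: $v_0$ has degree $n$ while each non-central vertex has degree $1$, so the ordered degrees are $n,1,1,\dots,1$. Testing the defining condition $\deg(x_i)\ge i-1$, the index $i=2$ satisfies $1\ge 1$, whereas $i=3$ fails because $1<2$; hence $m(S_n)=2$. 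Applying Proposition~\ref{squeeze} once more yields $\varphi(S_n)\le m(S_n)=2$, and combining the two bounds gives $\varphi(S_n)=2$.

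Finally I would confirm the small cases fit this reasoning: for $n=1$ the graph is a single edge $P_2$ with degree sequence $1,1$, for which the $m$-degree test stops at $i=2$, while for $n\ge 2$ there are genuine leaves so the $i=3$ test is the one that fails; in either subcase $m(S_n)=2$. There is essentially no obstacle in this argument — the proposition is elementary — and the only point demanding a little care is the bookkeeping of the $m$-degree at the boundary index together with a sanity check of the edge cases $n=0$ and $n=1$.
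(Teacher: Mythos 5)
Your proposal is correct and follows essentially the same route as the paper: both handle $S_0$ trivially and then squeeze $\varphi(S_n)$ between $\chi(S_n)=2$ and $m(S_n)=2$ via Proposition~\ref{squeeze}, with your version merely spelling out the $m$-degree bookkeeping (and the $n=1$ boundary case) that the paper asserts without detail, in place of the paper's explicit two-coloring of the leaves.
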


\begin{proof}
    $S_0$ is the trivial graph, $K_1$. As there exists only one vertex to color, then 1 is the maximum number of colors. Let $n>0$. Assigning all of the vertices $v_1,\ldots,v_n$ the color 2 will form a proper $b$-coloring. The chromatic number is 2 and $m(S_n)=2$. We conclude that for $n>0$, $\varphi(S_n)=2$.
\end{proof}

\begin{defn}\label{boxproddef}

Let the vertices $V(G) = \{u_0,u_1,\ldots u_n\}$ and the vertices $V(H) =  \{v_0,v_1,\ldots v_m\}$.
The \textit{Cartesian product} of two graphs $G$ and $H$ defined such that $V(G\square H) = V(G) \times V(H)$ and vertices $(u,v),(u',v')\in V(G\square H)$ are adjacent if, and only if, either $v=v'$ and $u\sim u'$, \textit{or} $u=u'$ and $v\sim v'$. 
    We partition $G\square H$ into $m+1$ disjoint sub-graphs, each of which is isomorphic to $G$ as follows,
     $$\bigsqcup_{v\in V(H)} (G,v). $$
    We call any sub-graph $(G,v)$ an \emph{inner graph} of $G\square H$ and we write it as  $(G)_v$. 
    We also call the graph $H$ the \emph{skeleton}. 
   We will very often use the similar notation $(u)_{v}$, equivalent to saying $(u,v)\in V(G)\times V(H)$ to denote the vertex $u$ that is contained within the inner graph $(G)_v$. This usage can also be extended to any \textit{edge}, denoted, $\left(\left\{u,u'\right\}\right)_{v}$.

   Observe that $G\square H\cong H\square G$.

\end{defn}

\begin{ex}
    Below is the box product of two path graphs, $P_3\square P_3$, often referred to as the \textit{Lattice} Graph.

    \begin{figure}[h]
        \centering
        \begin{tikzpicture}
	\begin{pgfonlayer}{nodelayer}
		\node [style=White] (0) at (-4, 0) {};
		\node [style=White] (1) at (-3, 0) {};
		\node [style=White] (2) at (-2, 0) {};
            \node [style=White] (3) at (-4, -1) {};
		\node [style=White] (4) at (-3, -1) {};
		\node [style=White] (5) at (-2, -1) {};
            \node [style=White] (6) at (-4, -2) {};
		\node [style=White] (7) at (-3, -2) {};
		\node [style=White] (8) at (-2, -2) {};
	\end{pgfonlayer}
	\begin{pgfonlayer}{edgelayer}
		\draw (0) to (1);
		\draw (1) to (2);
            \draw (3) to (4);
		\draw (4) to (5);
            \draw (6) to (7);
		\draw (7) to (8);
        \draw (0) to (3);
        \draw (3) to (6);
        \draw (1) to (4);
        \draw (4) to (7);
        \draw (2) to (5);
        \draw (5) to (8);
	\end{pgfonlayer}
    \end{tikzpicture}
        \caption{The Cartesian Product of $P_3$ and $P_3$.}
        
    \end{figure}
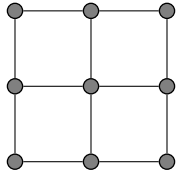
\end{ex}

The Cartesian product is one of many types of products that have been well-studied. Other product graphs that have been studied include the tensor product, the direct product, the rooted product, the Kronecker product, and others. Consider \cite{Chuan, Jakovac, Koch}.

\begin{prop}\label{prop2.2}
$\varphi(G\square H) \ge max\{\varphi(G),\varphi(H)\}$
\end{prop}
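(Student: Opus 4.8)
The plan is to exploit the product symmetry $G\square H \cong H\square G$ to reduce the claim to a single inequality, and then build an explicit $b$-coloring of $G\square H$ by overlaying a $b$-coloring of the larger factor onto every inner graph, ``twisted'' by a proper coloring of the skeleton so that properness survives the vertical edges. Concretely, I would assume without loss of generality that $\varphi(G)\ge\varphi(H)$, so that $\max\{\varphi(G),\varphi(H)\}=\varphi(G)$; the opposite case is identical after interchanging $G$ and $H$. Write $k=\varphi(G)$ and fix a $b$-coloring $c\colon V(G)\to\mathbb{Z}_k$ realizing $k$ colors, with $b$-vertices $w_1,\dots,w_k$. The observation that makes the construction work is that, by Proposition~\ref{squeeze}, $k=\varphi(G)\ge\varphi(H)\ge\chi(H)$, so $H$ admits a proper coloring $s\colon V(H)\to\mathbb{Z}_k$ using at most $k$ colors.

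Next I would color $G\square H$ by assigning to $(u)_v$ the color $\bigl(c(u)+s(v)\bigr)\bmod k$, and check properness against the two edge types of Definition~\ref{boxproddef}. On a horizontal edge inside an inner graph $(G)_v$, the endpoints $(u)_v$ and $(u')_v$ receive $c(u)+s(v)$ and $c(u')+s(v)$, which differ because $c$ is proper on $G$; on a vertical edge $(u)_v\sim(u)_{v'}$ with $v\sim v'$ in $H$, the colors $c(u)+s(v)$ and $c(u)+s(v')$ differ because $s$ is proper on $H$. Hence the coloring is proper, and since $c$ is onto $\mathbb{Z}_k$ all $k$ colors appear.

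Finally I would verify the $b$-condition. Adding the constant $s(v)$ modulo $k$ is a bijection of the color set, so the restriction of the coloring to any single inner graph $(G)_v$ is merely a relabeling of $c$ and is therefore itself a $b$-coloring of $(G)_v\cong G$. In particular, inside one fixed copy $(G)_v$ the shifted images of $w_1,\dots,w_k$ are already adjacent, within $(G)_v$ alone, to vertices of every other color, so they remain $b$-vertices in $G\square H$ and together cover all $k$ colors. This exhibits a $b$-coloring of $G\square H$ with $k$ colors, giving $\varphi(G\square H)\ge k=\varphi(G)=\max\{\varphi(G),\varphi(H)\}$.

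I expect the main obstacle to be exactly the vertical edges: the naive attempt to copy one fixed $b$-coloring identically into every inner graph fails to be proper along the edges joining adjacent inner graphs. The twist by $s$ repairs this, but only because $\chi(H)\le k$; the reduction to the case $\varphi(G)\ge\varphi(H)$ is precisely what guarantees this inequality through Proposition~\ref{squeeze}, so the symmetry step is not cosmetic but is what licenses the whole construction.
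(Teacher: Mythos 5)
Your proof is correct: the twisted coloring $(u)_v\mapsto (c(u)+s(v))\bmod k$ is proper on both edge types of Definition~\ref{boxproddef}, and since adding the constant $s(v)$ is a bijection of $\mathbb{Z}_k$, each inner copy $(G)_v$ carries a relabeled $b$-coloring of $G$ whose $b$-vertices remain $b$-vertices in the product, giving $\varphi(G\square H)\ge\varphi(G)=\max\{\varphi(G),\varphi(H)\}$ after the symmetry reduction. The paper offers no argument of its own for Proposition~\ref{prop2.2} --- it defers entirely to \cite{qn} --- and your construction, including the use of $\chi(H)\le\varphi(H)\le k$ from Proposition~\ref{squeeze} to obtain the proper coloring $s$ of the skeleton, is essentially the standard proof from that reference, so you have simply supplied in full the details the paper outsources.
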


\begin{proof}
    Shown in \cite{qn}.
\end{proof}

This proposition proves useful when we consider better lower bounds than the chromatic number. More often than not, $\chi(G\square H)=\max\{\varphi(G),\varphi(H)\}$.

\begin{defn}
    The \textit{line graph}, denoted as $L(G)$, is a graph with a vertex set $V$ such that each $v\in V$ corresponds to an element $e\in E(G)$. For any $v,u\in L(G)$, $v\sim u$ iff edges are adjacent to other vertices in $G$.

    When we color the vertices of $L(G)$, we instead color the edges of $G$. A \textit{ proper edge coloring} assigns a color $c(e)$ to an edge $e\in E(G)$ such that no two edges that share the same vertex have the same color.
\end{defn}

\begin{prop}
    Each proper coloring of the line graph corresponds to a proper edge coloring. Each proper coloring of the total graph corresponds to a proper edge-vertex coloring.
\end{prop}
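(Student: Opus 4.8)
The plan is to argue directly from the definitions, since the proposition asserts an exact correspondence rather than an inequality or an extremal value. For the line graph, I would first fix the canonical bijection $\beta : V(L(G)) \to E(G)$ sending each vertex of $L(G)$ to the edge of $G$ it represents. The heart of the argument is to verify that $\beta$ translates adjacency faithfully: two vertices $x,y \in V(L(G))$ are adjacent if and only if the edges $\beta(x),\beta(y)$ share an endpoint in $G$. This is precisely the defining adjacency of $L(G)$, so it holds in both directions. Consequently, a color assignment $c$ on $V(L(G))$ is proper (adjacent vertices receive distinct colors) exactly when the induced assignment $c \circ \beta^{-1}$ on $E(G)$ gives distinct colors to every pair of edges sharing a vertex, which is the definition of a proper edge coloring. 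The correspondence is thus a genuine bijection between proper colorings of $L(G)$ and proper edge colorings of $G$.

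For the total graph I would proceed identically, but track three adjacency types instead of one. Recall that $V(T(G))$ is identified with $V(G) \cup E(G)$, and that two elements are adjacent in $T(G)$ exactly when (i) both are vertices adjacent in $G$, (ii) both are edges sharing an endpoint in $G$, or (iii) one is a vertex and the other an edge incident to it. I would again fix the natural bijection from $V(T(G))$ onto $V(G) \cup E(G)$ and check that these three adjacency types correspond, respectively, to the three constraints of an edge-vertex (total) coloring: adjacent vertices receive distinct colors, edges sharing a vertex receive distinct colors, and each edge differs in color from both of its endpoints. Since these are exactly the conditions defining a proper edge-vertex coloring, a proper coloring of $T(G)$ and a proper edge-vertex coloring of $G$ are the same object under the bijection.

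The only point requiring care — rather than a genuine obstacle — is to confirm that the bijection introduces neither spurious adjacencies nor omissions: every constraint on one side must correspond to a constraint on the other, with no leftover pairs. Because the adjacency relations of $L(G)$ and $T(G)$ are defined precisely to encode the incidence structure of $G$, this matching is automatic once the three adjacency types are enumerated. No counting or extremal reasoning is needed; the statement is a restatement of definitions, and the proof consists of making that restatement explicit.
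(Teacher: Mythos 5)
Your proposal is correct, and it takes the only natural route: the paper states this proposition without any proof, treating it as an immediate consequence of the definitions of $L(G)$, $T(G)$, proper edge colorings, and proper edge-vertex colorings, and your argument simply makes that definitional correspondence explicit via the canonical bijections $V(L(G))\to E(G)$ and $V(T(G))\to V(G)\cup E(G)$ together with the adjacency-type check. Nothing is missing; your verification of the three adjacency types for $T(G)$ is exactly the content the paper leaves implicit.
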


\begin{defn}
    The \textit{total graph}, denoted as $T(G)$, is a graph with a vertex set $V$ such that each $v\in V$ corresponds to an element of $V(G)\cup E(G)$. For any $v,u\in T(G)$, $v\sim u$ iff vertices in $G$ are adjacent to other vertices, vertices are incident to other edges in $G$, or edges are adjacent to other edges.

    Similar to the line graph, we have a \textit{proper edge-vertex coloring} that has the further criteria that if an edge $e\in E(G)$ contains the vertex $v\in V(G)$, then $c(e)\neq c(v)$.
\end{defn}

The total graph is a generalization of the line graph.

\begin{prop}
    \textit{For $n>1$, the $b$-chromatic number of $L(S_n)=n-1$.}
\end{prop}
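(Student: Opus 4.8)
The plan is to first pin down the line graph explicitly rather than reasoning about it abstractly. Since $S_n$ is a star, every one of its edges is incident to the central vertex $v_0$, so any two edges of $S_n$ meet at $v_0$. By the definition of the line graph, two vertices of $L(S_n)$ are adjacent exactly when the corresponding edges share an endpoint; hence every pair of vertices of $L(S_n)$ is adjacent. I would record this as the single structural fact driving the whole argument: $L(S_n)$ is a complete graph whose vertex set is the edge set of $S_n$, i.e. $L(S_n)\cong K_{|E(S_n)|}$.

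Second, I would read off $\varphi$ from Proposition~\ref{squeeze} instead of constructing a $b$-coloring by hand. For a complete graph on $N$ vertices every vertex has degree $N-1$, so $\omega=N$ and $m=N$, and the chain $\omega(G)\le\chi(G)\le\varphi(G)\le m(G)$ squeezes $\varphi$ to $N$. For the lower bound one can also exhibit the witnessing coloring directly: giving each vertex its own color is proper, and since each vertex is adjacent to all others it sees every remaining color at once, so it is automatically a $b$-vertex. Thus no separate feasibility check or delicate $b$-vertex placement is required beyond this observation.

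The only genuinely delicate point — and the step I expect to be the main obstacle — is the bookkeeping that turns $N=|E(S_n)|$ into the stated closed form, because the value of $\varphi(L(S_n))$ is forced to equal the number of edges of the star and nothing else. Under the convention fixed earlier in the paper, where $S_n$ consists of the central vertex $v_0$ together with $n$ non-central vertices (so that $S_0\cong K_1$), the star has exactly $n$ edges, and the argument above gives $\varphi(L(S_n))=n$. I would therefore verify the leaf/edge count carefully against the definition of $S_n$ before committing to the final constant, since an answer of $n-1$ can only arise from a star on $n$ vertices (that is, $n-1$ leaves) rather than $n$ leaves; this is the one place where an off-by-one would change the stated result.
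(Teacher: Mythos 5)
Your approach coincides with the paper's: both identify $L(S_n)$ as a complete graph and then read the $b$-chromatic number off immediately (the paper simply states the isomorphism; you additionally justify the value via the chain of Proposition~\ref{squeeze}, which amounts to the same computation since $\omega=\chi=\varphi=m$ for a complete graph). More importantly, the off-by-one worry you flagged is real, and you resolved it on the correct side: under the convention fixed in this paper, $S_n$ has a central vertex $v_0$ adjacent to $n$ non-central vertices, hence exactly $n$ edges, so $L(S_n)\cong K_n$ and $\varphi(L(S_n))=n$, not $n-1$. The paper's own proof asserts $L(S_n)\cong K_{n-1}$, which presupposes the other convention $S_n=K_{1,n-1}$, and this is inconsistent with the rest of the paper: the companion proposition computes $\varphi(T(S_n))=n+1$ by counting the $n$ edges of $S_n$ together with $v_0$, and the remark following Theorem~\ref{thm:line_chrom} uses $L(K_{1,m+n})\cong K_{m+n}$, both of which force $|E(S_n)|=n$. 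So your proof is sound, and the discrepancy with the printed statement lies in the paper's constant, not in your bookkeeping.
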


\begin{proof}
    The line graph of $S_n$ is isomorphic to the complete graph, $K_{n-1}$. Thus, the $b$-chromatic number of $L(S_n)$ must be $n-1$.
\end{proof}

\begin{prop}
    \textit{For $n>1$, the $b$-chromatic number of $T(S_n)=n+1$.}
\end{prop}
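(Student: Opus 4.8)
The plan is to pin down $\varphi(T(S_n))$ by sandwiching it between a clique lower bound and the $m$-degree upper bound, both supplied by Proposition~\ref{squeeze}. First I would fix notation for the $2n+1$ vertices of $T(S_n)$: write $v_0$ for the vertex coming from the centre, $v_1,\dots,v_n$ for the leaves, and $e_1,\dots,e_n$ for the vertices arising from the edges $\{v_0,v_i\}$. The adjacencies in $T(S_n)$ then split into three familiar types. The vertex--vertex edges give $v_0\sim v_i$ for every $i$ (and no two leaves are adjacent); the edge--edge adjacencies make $e_1,\dots,e_n$ into a clique, since every $e_i$ contains $v_0$; and the incidences give $v_0\sim e_i$ for all $i$ together with $v_i\sim e_i$ for each $i$.

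For the upper bound I would read the degree sequence straight off this description: $\deg(v_0)=2n$, while $\deg(e_i)=n+1$ for each $i$ (namely the $n-1$ adjacent edge-vertices plus the two incidences with $v_0$ and $v_i$), and $\deg(v_i)=2$ for each leaf. Sorting these in decreasing order yields one vertex of degree $2n$, then $n$ vertices of degree $n+1$, then $n$ vertices of degree $2$. Checking the condition $\deg(x_i)\ge i-1$ down this list, every vertex through position $n+1$ qualifies, whereas the first leaf sits at position $n+2$ with degree $2 < n+1$ as soon as $n>1$. Hence $m(T(S_n))=n+1$, and Proposition~\ref{squeeze} gives $\varphi(T(S_n))\le n+1$.

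For the matching lower bound I would exhibit the clique $\{v_0,e_1,\dots,e_n\}$: the $e_i$ are pairwise adjacent, and each $e_i$ is adjacent to $v_0$, so these $n+1$ vertices induce a $K_{n+1}$. Thus $\omega(T(S_n))\ge n+1$, and another application of Proposition~\ref{squeeze} gives $\varphi(T(S_n))\ge n+1$. Combining the two bounds forces $\varphi(T(S_n))=n+1$.

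There is no serious obstacle here; the argument is essentially a degree count combined with spotting the right clique. The only points requiring care are computing the incidence degrees correctly (each $e_i$ must be counted as adjacent to \emph{both} of its endpoints $v_0$ and $v_i$, as well as to the other $n-1$ edge-vertices) and noting where the hypothesis $n>1$ actually enters: it is precisely what keeps the degree-$2$ leaves from pushing the $m$-degree past $n+1$. For $n=1$ the star degenerates, which is why the statement restricts to $n>1$.
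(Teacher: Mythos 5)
Your proof is correct and takes essentially the same approach as the paper: both bound $\varphi(T(S_n))$ below by the clique $\{v_0,e_1,\ldots,e_n\}$ of size $n+1$ and above by $m(T(S_n))=n+1$, concluding via Proposition~\ref{squeeze}. Your version simply makes explicit the degree counts ($\deg(v_0)=2n$, $\deg(e_i)=n+1$, $\deg(v_i)=2$) that the paper asserts without computation.
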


\begin{proof}
    The largest clique in $T(S_n)$ is always $n+1$. This is formed by the $n$ edges and the central vertex $v_0\in S_n$. Additionally, the $m$-degree is also always $n+1$. By Proposition~\ref{squeeze}, the $b$-chromatic number of $T(S_n)$ must be $n+1$.
\end{proof}

\begin{defn}
    $d(u,v)$ represents the distance between the vertices $u$ and $v$. The \textit{distance} between $u$ and $v$ is the shortest path of edges from $u$ to $v$. The \textit{Diameter} of a graph represents the longest distance between two points $u,v\in G$.
\end{defn}

\begin{defn}\label{powergraphdef}\label{power}
    The power graph of $G$, denoted as $G^p$, has a vertex set $V(G)$ and an edge set $E(G^p) = \{\{u,v\}\in E(G) : d(u,v)\le p\}$.
\end{defn}

    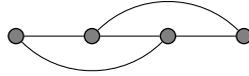
\begin{figure}[h]
        \centering
        \begin{tikzpicture}
	\begin{pgfonlayer}{nodelayer}
		\node [style=White] (0) at (-4, 0) {};
		\node [style=White] (1) at (-3, 0) {};
		\node [style=White] (2) at (-2, 0) {};
		\node [style=White] (3) at (-1, 0) {};
	\end{pgfonlayer}
	\begin{pgfonlayer}{edgelayer}
		\draw (0) to (1);
		\draw (1) to (2);
		\draw (2) to (3);
        \draw [bend right=45](0) to (2);
        \draw [bend left=45](1) to (3);
	\end{pgfonlayer}
    \end{tikzpicture}
        \caption{The second power of $P_4$.}
    \end{figure}

\section{Cartesian Product of Star Graphs}\label{sec:products}

In this section, we prove the explicit formulas for the $b$-chromatic number of $S_n\square S_m$, $L(S_n\square S_m)$, and $T(S_n\square S_m)$.

\begin{thm}\label{Theorem_box_prod}
   \textit{For two graphs $S_n$ and $S_m$, where $n\ge m\ge1$, $\varphi(S_n\square S_m)=m+2$.}
\end{thm}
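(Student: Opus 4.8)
The plan is to prove the two bounds $\varphi(S_n\square S_m)\le m+2$ and $\varphi(S_n\square S_m)\ge m+2$ separately: the upper bound from the $m$-degree, the lower bound by an explicit $b$-coloring. First I would fix notation as in Definition~\ref{boxproddef}, writing $u_0$ for the centre of $S_n$ and $v_0$ for the centre of $S_m$, so the vertices are $(u_i)_{v_j}$ with $0\le i\le n$ and $0\le j\le m$. A direct reading of the adjacency rule splits the vertices into four types with easily computed degrees: the \emph{apex} $(u_0)_{v_0}$ has degree $n+m$; the $m$ \emph{central-column} vertices $(u_0)_{v_j}$ with $j\ge1$ have degree $n+1$; the $n$ \emph{central-row} vertices $(u_i)_{v_0}$ with $i\ge1$ have degree $m+1$; and the $nm$ \emph{interior} vertices $(u_i)_{v_j}$ with $i,j\ge1$ have degree $2$, each adjacent only to $(u_0)_{v_j}$ and $(u_i)_{v_0}$. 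The key structural fact I would isolate is that the interior vertices form an independent set; in particular a colour may repeat freely along any interior row or column. Sorting the degree sequence (using $n\ge m$, so $n+1\ge m+1$) the top $m+1$ entries are $\ge n+1$, entry $m+2$ equals $m+1=(m+2)-1$, and entry $m+3$ (when present) equals $m+1<m+2$; hence $m(S_n\square S_m)=m+2$, and Proposition~\ref{squeeze} gives $\varphi\le m+2$.

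For the lower bound I would exhibit a $b$-coloring on colours $\{1,\dots,m+2\}$ whose $b$-vertices are the apex, the $m$ central-column vertices, and one central-row vertex $(u_1)_{v_0}$ — exactly $m+2$ vertices, each of degree $\ge m+1$. I would set $(u_0)_{v_0}=1$, set $(u_0)_{v_j}=j+1$ for $1\le j\le m$, and reserve colour $m+2$ for $(u_1)_{v_0}$. The apex then automatically sees colours $2,\dots,m+1$ along the central column and will see $m+2$ at $(u_1)_{v_0}$, so it is a $b$-vertex. Two requirements remain: each $(u_0)_{v_j}$ must see $\{2,\dots,m+2\}\setminus\{j+1\}$ through the interior cells of column $j$ (its only neighbours besides the apex), and $(u_1)_{v_0}$ must see $\{2,\dots,m+1\}$ through the interior cells of row $1$.

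Because the interior is independent, both requirements reduce to a coverage problem on the $n\times m$ interior grid whose only intrinsic constraint is $(u_i)_{v_j}\ne j+1$; crucially I would \emph{defer} the central-row colours. Concretely I would colour row $1$ by the cyclic shift $(u_1)_{v_j}=2+(j\bmod m)$, a bijection onto $\{2,\dots,m+1\}$ that avoids $j+1$; this is precisely a perfect matching in $K_{m,m}$ minus a perfect matching, and is where the hypothesis $m\ge2$ is used. Treating the columns independently (distinct columns share no edges), I would then place the remaining $m-1$ required colours of column $j$, namely $\{2,\dots,m+2\}\setminus\{j+1,(u_1)_{v_j}\}$, among the $n-1\ge m-1$ cells in rows $2,\dots,n$. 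Only then would I assign the central-row vertices: $(u_1)_{v_0}=m+2$ is legal since row $1$ avoids $m+2$, and for $i\ge2$ I would give $(u_i)_{v_0}$ any colour not appearing in row $i$ and different from $1$, which always exists because each row uses at most $m<m+2$ colours. One checks every edge is properly coloured and each of the $m+2$ colours has its designated $b$-vertex, giving $\varphi\ge m+2$ and hence equality.

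The main obstacle is exactly this interior step: one must cover every column's prescribed $m$-set \emph{and} make row $1$ a transversal of $\{2,\dots,m+1\}$ while preserving properness once the central-row colours are fixed. The device that dissolves the apparent conflict is ordering — choosing the interior before the central-row vertices, so that a free colour for each central-row vertex is guaranteed. I would finish by flagging the degenerate case: the matching step fails at $m=1$, and in fact a short direct argument (the apex and its central-column partner are the only vertices of degree $>2$, forcing them apart and leaving no $b$-vertex for the third colour) shows $\varphi(S_n\square S_1)=2$; thus the construction, and the stated value $m+2$, are to be understood for $m\ge2$, with the small case treated by inspection.
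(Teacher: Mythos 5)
Your proposal is correct and follows essentially the same route as the paper: the upper bound via the identical degree census giving $m(S_n\square S_m)=m+2$ (the paper's Lemma~\ref{lem_box_prod}), and the lower bound via the same constructive coloring with the same $b$-vertices (apex, central-column vertices, one central-row vertex) and the same cyclic-shift device on row $1$, with your deferral of the central-row colors playing the role of the paper's explicit handling of the excess vertices. Your closing remark about $m=1$ is moreover a genuine correction to the statement rather than a defect of your proof: the paper's Lemma~\ref{lem_box_prod} is stated only for $m>1$, yet the theorem claims $n\ge m\ge 1$, and your short argument that $\varphi(S_n\square S_1)=2\neq 3$ (the two adjacent central vertices get distinct colors, and no degree-two leaf can then serve as a $b$-vertex for the third color) is valid, so the hypothesis should indeed read $m\ge 2$.
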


Before we dive into the proof of Theorem~\ref{Theorem_box_prod}, we prove the following useful lemma.

In Lemma 1 and Theorem 3.1, we use the following notation. 
Without loss of generality, let $n\ge m$. We write $V(S_m)=\{v_0, v_1, \ldots, v_m\}$ where $v_0$ is the central vertex of $S_m$ and $V(S_n) = \{w_0, w_1, \ldots, w_n\}$ where $w_0$ is the central vertex of $S_n$. Observe that there are $(m+1)$ central vertices for each inner graph $(S_n)_{v_j}$ by the definition of an inner graph.

\begin{lemma}\label{lem_box_prod}
\textit{For $1<m\leq n$, the $m$-degree of $S_n\square S_m$ is $m + 2$.}
\end{lemma}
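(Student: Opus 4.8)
The plan is to compute the $m$-degree directly from the degree sequence of $S_n\square S_m$, since $m(G)$ depends only on how the degrees are distributed. By the standard degree formula for a Cartesian product, $\deg\bigl((w_i)_{v_j}\bigr)=\deg_{S_n}(w_i)+\deg_{S_m}(v_j)$, so I would first partition the $(n+1)(m+1)$ vertices into four classes according to whether each coordinate is central or a leaf. The single vertex $(w_0)_{v_0}$ has degree $n+m$; the $m$ vertices $(w_0)_{v_j}$ with $j\geq 1$ each have degree $n+1$; the $n$ vertices $(w_i)_{v_0}$ with $i\geq 1$ each have degree $m+1$; and the remaining $nm$ vertices $(w_i)_{v_j}$ with $i,j\geq 1$ each have degree $2$.

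Next I would sort these degrees in non-increasing order. Since $n\geq m\geq 2$ we have the chain $n+m\geq n+1\geq m+1\geq 3>2$, so the sorted sequence $x_1,x_2,\ldots$ places the degree-$(n+m)$ vertex at position $1$, the degree-$(n+1)$ vertices at positions $2$ through $m+1$, the degree-$(m+1)$ vertices at positions $m+2$ through $n+m+1$, and the degree-$2$ vertices last. The key positions for the argument are $i=m+2$ and $i=m+3$.

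With the sorted sequence in hand, I would evaluate the defining inequality $\deg(x_i)\geq i-1$ at these two indices. At $i=m+2$ the vertex $x_{m+2}$ is the first degree-$(m+1)$ vertex, and since $m+1\geq (m+2)-1$, the condition holds, so $m(S_n\square S_m)\geq m+2$. At $i=m+3$ the vertex $x_{m+3}$ still has degree $m+1$ — this is where I would invoke $n\geq 2$, which guarantees at least two degree-$(m+1)$ vertices — but now $m+1<(m+3)-1=m+2$, so the condition fails. Because the degree sequence is non-increasing while $i-1$ is strictly increasing, once the inequality fails it cannot recover for larger $i$; hence the largest valid index is exactly $m+2$, giving $m(S_n\square S_m)=m+2$.

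The computation is routine, so I do not expect a serious obstacle; the only point requiring care is the sorting step in the boundary case $n=m$, where the degree-$(n+1)$ and degree-$(m+1)$ classes share the common value $m+1$ and must be merged into a single block of $n+m=2m$ vertices of degree $m+1$. This merge only lengthens the block of degree-$(m+1)$ vertices and leaves the evaluations at positions $m+2$ and $m+3$ unchanged, so the conclusion $m(S_n\square S_m)=m+2$ holds uniformly for all $1<m\leq n$.
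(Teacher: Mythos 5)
Your proposal is correct and takes essentially the same approach as the paper: both partition the vertices of $S_n\square S_m$ into the four degree classes ($n+m$, $n+1$, $m+1$, and $2$) and read the $m$-degree off the resulting degree sequence. If anything, yours is the more complete write-up, since the paper only exhibits at least $m+2$ vertices of degree at least $m+1$ (the lower bound) and asserts $m(G)=m+2$, whereas your evaluation at index $m+3$ (where $\deg(x_{m+3})=m+1<m+2$, using $n\geq 2$) together with the monotonicity observation explicitly rules out any larger value, and your handling of the merged block when $n=m$ covers the boundary case the paper does not discuss.
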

\begin{proof}
Without loss of generality, assume that $n\ge m$. We can decompose $S_n\square S_m$ such that $S_m$ is the \textit{skeleton}. Then, there must be $m+1$ inner graphs of the form $(S_n)_{v_j}$ for $j\in\{0,\ldots,m\}$. Each vertex of the form $(w_0)_{v_j}$ such that $j\neq0$ has a degree $n+1$. If $n\ge m$, then $n+1\ge m+1$.
Furthermore, $(w_0)_{v_0}$ has a degree of $n+m$ and $n+m\ge n+1\ge m+1$ and therefore, $n+m\ge m+1$.
Finally, the vertices $(w_i)_{v_0}$ such that $i\neq0$  have a degree of $m+1$ and by reflexivity, $m+1\ge m+1$.  
Therefore, there are \textit{at least} $m+2$ vertices of degree $m+1$, satisfying the conditions for $m(G) = m+2 = m + 2$.
\end{proof}

\begin{proof}[Proof of Theorem~\ref{Theorem_box_prod}]\

The method used to color the graph $S_n\square S_m$ as a $b$-coloring can be described algorithmically as follows,

\begin{enumerate}

    \item Color each $(w_0)_{v_j}$ with color $i\in \{0,1,\ldots, m\}$.

    \item  Without loss of generality, assign the vertex $(w_1)_{v_0}$ the color $(m+1)$. These specific vertices will be $b$-vertices. Observe that $(w_0)_{v_0}$ is already a $b$-vertex.

\begin{center}
\begin{tikzpicture}[
every edge/.style = {draw=black,very thick},
 vrtx/.style args = {#1/#2}{%
      circle, draw, thick, fill=white,
      minimum size=5mm, label=#1:#2}, scale=0.75
                    ]
\node (1) [vrtx=left/]     at ( 0, 0) {0};
\node (2) [vrtx=left/]     at (-1, 1) {};
\node (3) [vrtx=right/]    at ( 1, 1) {4};
\node (4) [vrtx=right/]    at ( 1,-1) {};
\node (5) [vrtx=left/]     at (-1,-1) {};
\node (6) [vrtx=left/]     at (0,3) {1};
\node (7) [vrtx=left/]     at (1,4) {};
\node (8) [vrtx=right/]    at (-1,4) {};
\node (9) [vrtx=right/]    at (-0.5,2) {};
\node (10) [vrtx=left/]    at (0.5,2) {};
\node (11) [vrtx=left/]    at ( 3, 0) {2};
\node (12) [vrtx=left/]    at (2, 0.5) {};
\node (13) [vrtx=right/]   at ( 4, 1) {};
\node (14) [vrtx=right/]   at ( 4,-1) {};
\node (15) [vrtx=left/]    at (2,-0.5) {};
\node (16) [vrtx=left/]    at ( -3, 0) {3};
\node (17) [vrtx=left/]    at (-4, 1) {};
\node (18) [vrtx=right/]   at ( -2, 0.5) {};
\node (19) [vrtx=right/]   at ( -4,-1) {};
\node (20) [vrtx=left/]    at (-2,-0.5) {};
\path   (1) edge (2)
        (1) edge (3)
        (1) edge (4)
        (1) edge (5)
        (6) edge (7)
        (6) edge (8)
        (6) edge (9)
        (6) edge (10)
        (11) edge (12)
        (11) edge (13)
        (11) edge (14)
        (11) edge (15)
        (16) edge (17)
        (16) edge (18)
        (16) edge (19)
        (16) edge (20)
        (1) edge (6)
        (1) edge (11)
        (1) edge (16);

\path[loosely dotted] (3) edge (7)
(4) edge (10)
(5) edge (9)
(2) edge (8)
(2) edge (12)
(3) edge (13)
(4) edge (14)
(5) edge (15)
(2) edge (17)
(3) edge (18)
(5) edge (19)
(4) edge (20)
;    
\end{tikzpicture}
\end{center}

    \item Next, our goal is to make $(w_1)_{v_0}$ a $b$-vertex. The vertices adjacent to $(w_1)_{v_0}$ are of the form $(w_1)_{v_j}$ such that $j\neq0$ and $(w_0)_{v_0}$.
    We can assign each $(w_1)_{v_j}$ the color $(j\mod m) + 1$ such that $j\in[m]$.
    Thus, $(w_1)_{v_0}$ is adjacent to a vertex of every other color.

\begin{center}
\begin{tikzpicture}[
every edge/.style = {draw=black,very thick},
 vrtx/.style args = {#1/#2}{%
      circle, draw, thick, fill=white,
      minimum size=5mm, label=#1:#2}, scale=0.75
                    ]
\node (1) [vrtx=left/]     at ( 0, 0) {0};
\node (2) [vrtx=left/]     at (-1, 1) {};
\node (3) [vrtx=right/]    at ( 1, 1) {4};
\node (4) [vrtx=right/]    at ( 1,-1) {};
\node (5) [vrtx=left/]     at (-1,-1) {};
\node (6) [vrtx=left/]     at (0,3) {1};
\node (7) [vrtx=left/]     at (1,4) {2};
\node (8) [vrtx=right/]    at (-1,4) {};
\node (9) [vrtx=right/]    at (-0.5,2) {};
\node (10) [vrtx=left/]    at (0.5,2) {};
\node (11) [vrtx=left/]    at ( 3, 0) {2};
\node (12) [vrtx=left/]    at (2, 0.5) {};
\node (13) [vrtx=right/]   at ( 4, 1) {3};
\node (14) [vrtx=right/]   at ( 4,-1) {};
\node (15) [vrtx=left/]    at (2,-0.5) {};
\node (16) [vrtx=left/]    at ( -3, 0) {3};
\node (17) [vrtx=left/]    at (-4, 1) {};
\node (18) [vrtx=right/]   at ( -2, 0.5) {1};
\node (19) [vrtx=right/]   at ( -4,-1) {};
\node (20) [vrtx=left/]    at (-2,-0.5) {};
\path   (1) edge (2)
        (1) edge (3)
        (1) edge (4)
        (1) edge (5)
        (6) edge (7)
        (6) edge (8)
        (6) edge (9)
        (6) edge (10)
        (11) edge (12)
        (11) edge (13)
        (11) edge (14)
        (11) edge (15)
        (16) edge (17)
        (16) edge (18)
        (16) edge (19)
        (16) edge (20)
        (1) edge (6)
        (1) edge (11)
        (1) edge (16);

\path[loosely dotted] (3) edge (7)
(4) edge (10)
(5) edge (9)
(2) edge (8)
(2) edge (12)
(3) edge (13)
(4) edge (14)
(5) edge (15)
(2) edge (17)
(3) edge (18)
(5) edge (19)
(4) edge (20)
;    
\end{tikzpicture}
\end{center}

    \item Next, our goal is to make each $(w_0)_{v_j}$ a $b$-vertex, s.t. $j\in[m]$.
    Note that $(w_0)_{v_j}$ is already adjacent to colors 0 and $(j\mod m)+1$. 
    For the remaining $n-1$ vertices in each $(S_n)_{v_j}$, we must cycle through the rest of the colors such that we avoid $(j\mod m) + 1$, $j$, and 0. To do this, we go through each $(S_n)_{v_j}$ such that $j\neq0$ and assign a vertex $(w_i)_{v_j}$ the minimum color required to make $(w_0)_{v_j}$ a $b$-vertex. Repeat this $m-1$ times. Recall that $(w_0)_{v_j}$ is already adjacent to 2 colors. So, the next $m-1$ iterations of adding minimum colors will add $2+(m-1) = m+1$ colors to each $(S_n)_{v_j}$ such that $j\neq0$. Therefore, the only color missing in each star is our last color, $m+1$. By Lemma~\ref{lem_box_prod}, we can assign the color $m+1$ to a vertex in each $(S_n)_{v_j}$ such that $j\neq0$.
    
    Because $m\leq n$, this may not color \textit{all} of the vertices in $(S_n)_{v_j}$.
    The excess non-colored vertices after the vertex $(w_{m+1})_{v_j}$ are of the form $(w_i)_{v_j}$ such that $i\in[m+2,n]$. These excess vertices can be given the color 0. The excess vertices in $(S_n)_{v_0}$ that correspond to the excess vertices in each $(S_n)_{v_j}$, such that $j\neq0$, can be given the color 1. We know this is possible by Lemma~\ref{lem_box_prod}. If $(w_1)_{v_0}$ has the color $m+1$ and it is connected to all colors $i\in[0,m]$, then it is a $b$-vertex. If each $(w_0)_{v_j}$ is now adjacent to $m+1$ different colors, then each one is a $b$-vertex. Therefore, each possible $b$-vertex from Lemma~\ref{lem_box_prod} is colored a different color and adjacent to every other color. 

\begin{center}
\begin{tikzpicture}[
every edge/.style = {draw=black,very thick},
 vrtx/.style args = {#1/#2}{%
      circle, draw, thick, fill=white,
      minimum size=5mm, label=#1:#2}, scale=0.75
                    ]
\node (1) [vrtx=left/]     at ( 0, 0) {0};
\node (2) [vrtx=left/]     at (-1, 1) {1};
\node (3) [vrtx=right/]    at ( 1, 1) {4};
\node (4) [vrtx=right/]    at ( 1,-1) {1};
\node (5) [vrtx=left/]     at (-1,-1) {1};
\node (6) [vrtx=left/]     at (0,3) {1};
\node (7) [vrtx=left/]     at (1,4) {2};
\node (8) [vrtx=right/]    at (-1,4) {0};
\node (9) [vrtx=right/]    at (-0.5,2) {4};
\node (10) [vrtx=left/]    at (0.5,2) {3};
\node (11) [vrtx=left/]    at ( 3, 0) {2};
\node (12) [vrtx=left/]    at (2, 0.5) {1};
\node (13) [vrtx=right/]   at ( 4, 1) {3};
\node (14) [vrtx=right/]   at ( 4,-1) {4};
\node (15) [vrtx=left/]    at (2,-0.5) {0};
\node (16) [vrtx=left/]    at ( -3, 0) {3};
\node (17) [vrtx=left/]    at (-4, 1) {4};
\node (18) [vrtx=right/]   at ( -2, 0.5) {1};
\node (19) [vrtx=right/]   at ( -4,-1) {0};
\node (20) [vrtx=left/]    at (-2,-0.5) {2};
\path   (1) edge (2)
        (1) edge (3)
        (1) edge (4)
        (1) edge (5)
        (6) edge (7)
        (6) edge (8)
        (6) edge (9)
        (6) edge (10)
        (11) edge (12)
        (11) edge (13)
        (11) edge (14)
        (11) edge (15)
        (16) edge (17)
        (16) edge (18)
        (16) edge (19)
        (16) edge (20)
        (1) edge (6)
        (1) edge (11)
        (1) edge (16);

\path[loosely dotted] (3) edge (7)
(4) edge (10)
(5) edge (9)
(2) edge (8)
(2) edge (12)
(3) edge (13)
(4) edge (14)
(5) edge (15)
(2) edge (17)
(3) edge (18)
(5) edge (19)
(4) edge (20)
;    
\end{tikzpicture}
\end{center}
\end{enumerate}

Now we have shown that through this algorithm it is possible to color $S_n\square S_m$ using $m+2$ colors. By Lemma~\ref{lem_box_prod}, $m(S_n\square S_m)=m+2$ and we know $\varphi(G)\leq m(G)$. Therefore, $\varphi(S_n\square S_m)=m+2$.
\end{proof}
\begin{thm}\label{thm:line_chrom}
\textit{For two graphs $S_n$ and $S_m$,} $\varphi(L(S_n\square S_m))=m+n$
\end{thm}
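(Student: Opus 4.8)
The plan is to bound $\varphi(L(S_n \square S_m))$ between $m+n$ from below and above. First I would understand the structure of $L(S_n \square S_m)$: its vertices are the edges of $S_n \square S_m$, and two such vertices are adjacent iff the corresponding edges share an endpoint. The edges of $S_n \square S_m$ fall into two natural families — the ``$S_n$-type'' edges lying inside some inner graph $(S_n)_{v_j}$, and the ``$S_m$-type'' edges connecting corresponding vertices across inner graphs (i.e., edges of the skeleton copies). I would first locate the vertices of maximum degree in the line graph, since these are the candidates for $b$-vertices. The key observation is that the edges incident to the global center $(w_0)_{v_0}$ form a large clique in $L(S_n \square S_m)$: the central vertex $(w_0)_{v_0}$ has degree $n+m$ in $S_n\square S_m$, so the $n+m$ edges meeting it are pairwise adjacent in the line graph, giving a clique of size $n+m$. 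By Proposition~\ref{squeeze} this yields $\varphi(L(S_n \square S_m)) \geq \omega \geq m+n$, establishing the lower bound essentially for free.

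For the upper bound I would compute $m(L(S_n \square S_m))$ and show it forces $\varphi \leq m+n$, or at least combine an $m$-degree count with a more careful argument. The crucial step is to classify the degrees of the line-graph vertices. An edge $e$ of $S_n \square S_m$ joining two vertices of degrees $d_1$ and $d_2$ becomes a line-graph vertex of degree $d_1 + d_2 - 2$. The high-degree vertices of $S_n \square S_m$ are the inner centers: $(w_0)_{v_0}$ has degree $n+m$, while each $(w_0)_{v_j}$ with $j \neq 0$ has degree $n+1$, and each $(w_i)_{v_0}$ with $i\neq 0$ has degree $m+1$; all remaining vertices have degree $2$. So the line-graph vertices of largest degree are the edges incident to these centers, and I would count how many line-graph vertices have degree at least $m+n-1$. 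I expect this count, together with the $m$-degree definition, to pin $m(L(S_n \square S_m))$ at exactly $m+n$ (or very close), which by $\varphi \le m(G)$ gives the matching upper bound.

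The main obstacle will be making the $m$-degree count tight enough. If $m(L(S_n\square S_m))$ turns out to exceed $m+n$, the bare inequality $\varphi \le m(G)$ is insufficient, and I would instead argue directly that no $b$-coloring can use more than $m+n$ colors. The natural way to do this is to show that any set of candidate $b$-vertices of size $k > m+n$ cannot all simultaneously see $k-1$ distinct colors: the vertices of degree exactly $m+n-1$ or higher in the line graph correspond to edges clustered around a small number of centers, so their neighborhoods overlap heavily and cannot realize too many distinct colors at once. I would quantify this overlap by noting that two $S_n$-type edges inside the same inner graph share that inner graph's center-edge structure, limiting the diversity of colors any one of them can witness. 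Carefully formalizing this ``shared-neighborhood'' obstruction — rather than relying on a crude degree count — is where the real work lies.

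To finish, once both bounds are in place I would exhibit an explicit edge-coloring of $S_n \square S_m$ with $m+n$ colors in which each color class contains a $b$-vertex, mirroring the constructive algorithm used in the proof of Theorem~\ref{Theorem_box_prod}; the $n+m$ edges at the global center $(w_0)_{v_0}$ receive the $m+n$ distinct colors and serve as the seed $b$-vertices, and the remaining edges are colored greedily so that each seed sees every other color. Combining $\varphi \ge m+n$ with $\varphi \le m+n$ then yields $\varphi(L(S_n \square S_m)) = m+n$.
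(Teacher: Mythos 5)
Your proposal takes essentially the same route as the paper: the lower bound comes from the clique of $m+n$ edges incident to $(w_0)_{v_0}$, and the upper bound from computing $m(L(S_n\square S_m))=m+n$ via exactly the degree count you describe (the edges at the global center have line-graph degrees $2n+m-1$ and $2m+n-1$, while all remaining edges have strictly smaller degree, so the $m$-degree count is tight and your contingency argument is never needed). The closing explicit $b$-coloring is also unnecessary, since the clique already gives $\varphi\ge\omega\ge m+n$ and the squeeze $\omega\le\varphi\le m(G)$ finishes immediately, which is precisely how the paper concludes.
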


First, we will prove the following lemma: that the $m$-degree is $m+n$. Recall that by definition, $\varphi(G)\leq m(G)$, so proving the $m$-degree is some integer will show that the $b$-chromatic number cannot go above the $m$-degree.

        \begin{lemma}\ \label{lem2}
        \textit{The $m$-degree of} $L(S_n\square S_m)$ \textit{is} $m+n$.

        Consider the edges of $S_n\square S_m$. The degree sequence of these edges will give us the degree sequence of $V\in L(S_n\square S_m)$. High degree vertices in $L(S_n\square S_m)$ are edges with a large neighborhood in the original graph. Observe that these must be the edges adjacent to $(w_0)_{v_0}$. Without loss of generality, let $n\ge m$, the degrees of these high degree vertices are either $2n+m-1$ or $2m+n-1$. There exist $n$ vertices of the degree $2m+n-1$ and $m$ vertices of the degree $2n+m-1$. These vertices (displayed as edges of $S_n\square S_m$) are shown in Figure~\ref{linefigure}.

\begin{figure}[h]
\centering
\begin{subfigure}[b]{0.4\textwidth}\centering
\begin{tikzpicture}[
every edge/.style = {draw=black,very thick},
 vrtx/.style args = {#1/#2}{%
      circle, draw, thick, fill=white,
      minimum size=5mm, label=#1:#2}, scale=0.75
                    ]
\node (1) [vrtx=left/]     at ( 0, 0) {};
\node (2) [vrtx=left/]     at (-1, 1) {};
\node (3) [vrtx=right/]    at ( 1, 1) {};
\node (4) [vrtx=right/]    at ( 1,-1) {};
\node (5) [vrtx=left/]     at (-1,-1) {};
\node (6) [vrtx=left/]     at (0,3) {};
\node (7) [vrtx=left/]     at (1,4) {};
\node (8) [vrtx=right/]    at (-1,4) {};
\node (9) [vrtx=right/]    at (-0.5,2) {};
\node (10) [vrtx=left/]    at (0.5,2) {};
\node (11) [vrtx=left/]    at ( 3, 0) {};
\node (12) [vrtx=left/]    at (2, 0.5) {};
\node (13) [vrtx=right/]   at ( 4, 1) {};
\node (14) [vrtx=right/]   at ( 4,-1) {};
\node (15) [vrtx=left/]    at (2,-0.5) {};
\node (16) [vrtx=left/]    at ( -3, 0) {};
\node (17) [vrtx=left/]    at (-4, 1) {};
\node (18) [vrtx=right/]   at ( -2, 0.5) {};
\node (19) [vrtx=right/]   at ( -4,-1) {};
\node (20) [vrtx=left/]    at (-2,-0.5) {};
\path   

        (1) edge (2) 
        (1) edge (3)
        (1) edge (4)
        (1) edge (5)
        (1) edge (6)
        (1) edge (11)
        (1) edge (16);

\path[loosely dotted] (3) edge (7)
(4) edge (10)
(5) edge (9)
(2) edge (8)
(2) edge (12)
(3) edge (13)
(4) edge (14)
(5) edge (15)
(2) edge (17)
(3) edge (18)
(5) edge (19)
(4) edge (20)
(6) edge (7)
        (6) edge (8)
        (6) edge (9)
        (6) edge (10)
        (11) edge (12)
        (11) edge (13)
        (11) edge (14)
        (11) edge (15)
        (16) edge (17)
        (16) edge (18)
        (16) edge (19)
        (16) edge (20);
\end{tikzpicture}
\end{subfigure}
\caption{$S_4\square S_3$}
\label{linefigure}
\end{figure}

        We will consider the subset of $V(G)$, $\{\{(w_i)_{v_0},(w_0)_{v_0}\}\in E(G):i\neq0\}$, as the set $U$. Each element in $U$ has the degree $2m+n-1$. Then we will consider the subset of $V(L(S_n\square S_m))$, $\{\{(w_0)_{v_j},(w_0)_{v_0}\}\in E(G):j\neq0\}$, as the set $W$. Each element in $W$ has the degree $2n+m-1$.
        Now we will show that elements in set $W$ have a higher degree than the vertices in the set $U$. If $n\ge m$, then $(n+m) + n\ge (n+m)+n$ which simplifies to $2n+m\ge2m+n$. By additivity, $2n+m-1\ge 2m+n-1$. Therefore, we know that all $m$ and $n$ vertices of degree $2n+m-1$ and $2m+n-1$ respectively, must both be at \textit{least} $2m+n-1$ degree. So there exist $m+n$ vertices of degree $2m+n-1$. This shows that each element in $U\cup W$ has a degree of \textit{at least} $2m+n-1$. Observe that $|U\cup W|=m+n$ and $m+n-1< 2m+n-1$, therefore the $m$-degree is at least $m+n$. The $m$-degree cannot be higher than $m+n$ as all of the vertices that exist in $(U\cup W)^c$ have a degree lower than $m+n$.
        Therefore, by the definition of the $m$-degree, there exist $m+n$ vertices of at least $(m+n-1)$-degree so, $m(L(S_n\square S_m))=m+n$.

         \end{lemma}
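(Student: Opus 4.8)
The plan is to compute the full degree sequence of $L(S_n\square S_m)$ by translating it into a statement about edge degrees in $S_n\square S_m$, and then read off the $m$-degree directly. The key observation is that the vertex of $L(G)$ corresponding to an edge $e=\{a,b\}\in E(G)$ has degree $\deg_G(a)+\deg_G(b)-2$, since it is adjacent precisely to the other edges meeting $a$ or $b$. So the problem reduces to understanding, for every edge of $S_n\square S_m$, the degrees of its two endpoints.

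First I would classify the vertices of $S_n\square S_m$ by degree. Decomposing with $S_m$ as skeleton, the central--central vertex $(w_0)_{v_0}$ has degree $n+m$; each inner centre $(w_0)_{v_j}$ with $j\neq 0$ has degree $n+1$; each outer vertex $(w_i)_{v_0}$ with $i\neq 0$ has degree $m+1$; and every remaining vertex $(w_i)_{v_j}$ with $i,j\neq 0$ has degree $2$. Since $(w_0)_{v_0}$ is the unique vertex of largest degree, the highest-degree vertices of the line graph are exactly the edges incident to it. These split into the set $U$ of $n$ edges $\{(w_0)_{v_0},(w_i)_{v_0}\}$, each of line-degree $(n+m)+(m+1)-2=2m+n-1$, and the set $W$ of $m$ edges $\{(w_0)_{v_0},(w_0)_{v_j}\}$, each of line-degree $(n+m)+(n+1)-2=2n+m-1$. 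Every other edge lies either inside some $(S_n)_{v_j}$ (line-degree $(n+1)+2-2=n+1$) or along the skeleton away from the centre (line-degree $(m+1)+2-2=m+1$).

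With this in hand the two bounds are short. For the lower bound, $|U\cup W|=m+n$ and, using $n\ge m$, every such vertex has line-degree at least $2m+n-1\ge m+n-1$, so there are $m+n$ vertices of degree at least $(m+n)-1$, giving $m(L(S_n\square S_m))\ge m+n$. For the upper bound I would note that the only line-vertices of degree at least $m+n$ are exactly those in $U\cup W$: the remaining degrees are $n+1$ and $m+1$, and since $m>1$ both satisfy $n+1<m+n$ and $m+1<m+n$. Hence once the $m+n$ entries of $U\cup W$ are exhausted, the next largest degree is $n+1<(m+n+1)-1$, so the index $m+n+1$ fails the defining inequality of the $m$-degree, and $m(L(S_n\square S_m))=m+n$.

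I expect the only delicate point to be the bookkeeping in the degree classification---correctly separating the four vertex types and checking that $(w_0)_{v_0}$ is strictly the unique maximum---together with the verification that the strict inequalities $n+1<m+n$ and $m+1<m+n$ genuinely rule out any further vertex from reaching the threshold; this is precisely where the hypothesis $1<m\le n$ is used.
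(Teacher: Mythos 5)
Your proposal is correct and follows essentially the same route as the paper: both identify the high-degree line-vertices as the edges incident to $(w_0)_{v_0}$, split them into the sets $U$ and $W$ of sizes $n$ and $m$ with line-degrees $2m+n-1$ and $2n+m-1$, and conclude $m(L(S_n\square S_m))=m+n$ from $|U\cup W|=m+n$. If anything, yours is slightly more complete, since you explicitly compute the remaining line-degrees $n+1$ and $m+1$ via $\deg(a)+\deg(b)-2$ to justify the upper bound (noting where $1<m\le n$ is needed), a step the paper only asserts when it claims all vertices in $(U\cup W)^c$ have degree below $m+n$.
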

        \begin{proof}[Proof of Theorem~\ref{thm:line_chrom}]\

       Observe that $U\cup W$ forms a subset of $S_n\square S_m$,
        
        $$\bigcup_{i=1}^n\{(w_i)_{v_0},(w_0)_{v_0}\}\cup\bigcup_{j=1}^m\{(w_0)_{v_j},(w_0)_{v_0}\}$$

         If each element in the set $U\cup W$ contains the $(w_0)_{v_0}$ in their edge pairs, then they form a clique of size $|U\cup W|$. Lemma~\ref{lem2} shows that $|U\cup W|$ is $m+n$. Therefore, by Proposition~\ref{squeeze},  $\varphi(L(S_n\square S_m))=m+n$. 
    
\end{proof}

\begin{rem}
    It should be noted that $\omega(L(S_n\square S_m))$ can alternatively be shown by the well known isomorphism, \begin{equation}\label{rookiso}
        L(K_{i,j})\cong K_i\square K_j
        \end{equation} For $i=1$ and $j=m+n$, we have $L(K_{1,m+n}) \cong K_{m+n}\square K_1\cong K_{m+n}$. The isomorphism shown in (\ref{rookiso}) defines the rook graph. It is interesting to note that the rook graph has several relationships with the star graph and its operations. We will look further into the rook graph in Theorem \ref{thm4.2} and Theorem \ref{rookgraphcor}.
\end{rem}

\begin{thm}\label{thm:line_tot}
\textit{For $n\ge m\ge 3$, It follows that,} $$\varphi(T(S_n\square S_m))=\begin{cases}
    2m+n+1, & \textit{If $n> 2(m-1)$}\\
    2n+3 & \textit{If $n\leq 2(m-1)$}
\end{cases}$$
\end{thm}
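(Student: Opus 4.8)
The plan is to sandwich $\varphi(T(S_n\square S_m))$ between the $m$-degree from above (Proposition~\ref{squeeze}) and an explicit $b$-coloring from below. Unlike the line-graph case, the clique number will fall strictly short of the target: since $S_n\square S_m$ is bipartite and hence triangle-free, every clique of $T(S_n\square S_m)$ is either a vertex together with its incident edges or a set of edges through a common vertex, so the clique number is exactly $n+m+1$ (the star at the center), which is smaller than both $2m+n+1$ and $2n+3$. Thus the lower bound must be constructive. First I would record the degrees, using that a vertex $v$ of $G$ has degree $2\deg_G(v)$ in $T(G)$ while an edge $\{u,v\}$ has degree $\deg_G(u)+\deg_G(v)$. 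Writing $X=(w_0)_{v_0}$, the central edges $f_j=\{(w_0)_{v_0},(w_0)_{v_j}\}$ and $g_i=\{(w_0)_{v_0},(w_i)_{v_0}\}$, and the inner centers $Y_j=(w_0)_{v_j}$, this gives $\deg X=2(n+m)$, $\deg f_j=2n+m+1$, $\deg g_i=n+2m+1$, $\deg Y_j=2n+2$, $\deg (w_i)_{v_0}=2m+2$, and all remaining vertices (the leaves $(w_i)_{v_j}$ and the two pendant-type edge families) of strictly smaller degree.

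The case split is forced by the degree sequence. The comparison $\deg g_i-\deg Y_j=(n+2m+1)-(2n+2)=2m-n-1$ is positive exactly when $n\le 2(m-1)$ and nonpositive exactly when $n>2(m-1)$, so the relative order of the $g_i$ and the $Y_j$ in the sorted sequence flips precisely at the stated boundary. Sorting all degrees in descending order and applying the definition of the $m$-degree, I would verify that when $n>2(m-1)$ the $(2m+n+1)$-th vertex still has degree at least $2m+n$ while the next one drops below it, giving $m(T(G))=2m+n+1$; and symmetrically that when $n\le 2(m-1)$ the $(2n+3)$-th vertex has degree $2n+2$ but the $(2n+4)$-th falls short, giving $m(T(G))=2n+3$. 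This is a routine (if fiddly) sorting argument that settles the upper bound in both cases.

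For the lower bound I would build a $b$-coloring on $k$ colors ($k=2m+n+1$ or $2n+3$). The backbone is that $\{X\}\cup\{f_1,\dots,f_m\}\cup\{g_1,\dots,g_n\}$ is a clique of size $n+m+1$ (all these edges and the vertex meet at $(w_0)_{v_0}$), so in any proper coloring they receive distinct colors and mutually see one another. I would then promote extra $b$-vertices from the $Y_j$: all $m$ of them in the first case, and a subset of $n-m+2$ of them in the second case, where the inequality $n-m+2\le m$ (equivalent to $n\le 2(m-1)$) guarantees enough are available and $n+m+1+(n-m+2)=2n+3$. Assigning the backbone the colors $1,\dots,n+m+1$ and the designated centers the remaining colors, each such $Y_j$ sees $X$ and $f_j$ directly and collects the rest through its $n$ leaves $(w_i)_{v_j}$ and incident inner edges.

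The main obstacle is exactly the routing for the \emph{edge} $b$-vertices. An edge $f_j$ is incident to $Y_j$ but not to $Y_{j'}$ for $j'\neq j$, so its remaining color demands must be met along the inner edges $\{(w_0)_{v_j},(w_i)_{v_j}\}$ (there are $n\ge m$ of them, leaving room); likewise each $g_i$ sees no $Y_j$ directly and must receive those colors along the skeleton edges $\{(w_i)_{v_0},(w_i)_{v_j}\}$. Making every $f_j$ and $g_i$ simultaneously see all $k-1$ other colors, while keeping the coloring proper where these inner and skeleton edges meet at the shared leaves $(w_i)_{v_j}$, is the delicate point. I expect to resolve it by a cyclic assignment of colors across each inner graph $(S_n)_{v_j}$, mirroring the modular scheme in the proof of Theorem~\ref{Theorem_box_prod}, using the excess leaves to absorb leftover demand, and invoking the feasibility bounds $\deg f_j,\deg g_i,\deg Y_j\ge k-1$ already furnished by the $m$-degree computation.
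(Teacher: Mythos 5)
Your proposal is correct and follows the paper's overall strategy: the same degree bookkeeping for $m(T(S_n\square S_m))$ (your degree formulas $2\deg_G(v)$ for vertices and $\deg_G(u)+\deg_G(v)$ for edges reproduce exactly the paper's eight classes, with the case boundary falling out of the comparison $2m-n-1$ just as in the paper's lemma), and a constructive lower bound built on the clique of size $n+m+1$ through $(w_0)_{v_0}$; your Case 1 (promote all $m$ inner centers $(w_0)_{v_j}$ with the new colors) is the paper's construction verbatim. Where you genuinely diverge is Case 2. The paper does \emph{not} keep the full clique as its $b$-vertex set there: it designates $(w_0)_{v_0}$, the $n$ edges $g_i=\{(w_0)_{v_0},(w_i)_{v_0}\}$, all $m$ centers $(w_0)_{v_j}$, and places the $n-m+2$ new colors on a subset of the edges $f_j=\{(w_0)_{v_0},(w_0)_{v_j}\}$, while you keep all of $\{X,f_1,\dots,f_m,g_1,\dots,g_n\}$ as $b$-vertices with colors $1,\dots,n+m+1$ and instead promote $n-m+2$ of the centers $Y_j$ to carry the new colors. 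Your choice is dual to the paper's and the counting matches ($n+m+1+(n-m+2)=2n+3$), but note it is tighter in one respect: your promoted $Y_j$ have degree exactly $2n+2=k-1$, so each closed neighborhood must be \emph{exactly} rainbow, and since each $f_j$ must also see the new colors, the $n-m+1$ remaining new colors in $(S_n)_{v_j}$ are forced onto the inner edges rather than the leaves; this is consistent ($n-m+1\le n$ and $n-m+2\le m$ are precisely the binding inequalities you cite), but leaves zero slack. The paper's variant spends the new colors on the $f_j$, whose degree $2n+m+1$ leaves room, at the price of making all $m$ tight-degree centers $b$-vertices, so the two constructions are of comparable delicacy; yours buys a uniform backbone across both cases, the paper's buys slack at the vertices receiving new colors. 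Your routing is left as a plan rather than executed, but the only genuine collision point — a skeleton edge $\{(w_i)_{v_0},(w_i)_{v_j}\}$ and an inner edge $\{(w_0)_{v_j},(w_i)_{v_j}\}$ meeting at a shared leaf while both carrying new colors — is resolved by permuting which column $j$ hosts which new color for each $i$, exactly the cyclic device you invoke and the analogue of the paper's steps 4--8, so the gap is fillable. A small bonus in your write-up absent from the paper: the observation that triangle-freeness of $S_n\square S_m$ pins $\omega(T(S_n\square S_m))=n+m+1$, which cleanly explains why the lower bound must be constructive rather than clique-based.
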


It should be noted that at $n=2(m-1)$, $2m+n+1=2n+3$.

First, we will prove the $m$-degree of $T(S_n\square S_m)$. Recall that $\varphi(G)\leq m(G)$.

\begin{lemma} \textit{If $n\ge m\ge 3$, then the following are true about the $m$-degree of $T(S_n\square S_m)$,}
\begin{enumerate}
    \item If $n> 2(m-1)$, then $ m(T(S_n\square S_m))=2m+n+1$.
    \item If $n\leq 2(m-1)$, then $ m(T(S_n\square S_m))=2n+3$.
\end{enumerate}

We'll first partition the vertices of $T(S_n\square S_m)$ into disjoint sets satisfying the following properties.

  Let $\alpha_0\in X_0=\left\{(w_0)_{v_0}\right\}$ . Then, $|X_0|=1$ and $\deg(\alpha_0)=2n+2m$. 

  Let $\alpha_1\in X_1=\left\{\{(w_0)_{v_0},(w_0)_{v_j}\}:j\neq0\right\}$. Then, $|X_1|=m$ and $\deg(\alpha_1)=2n+m+1$.

 Let $\alpha_2\in X_2=\left\{(w_0)_{v_j}:j\neq0\right\}$. Then,  $|X_2|=m$ and $\deg(\alpha_2)=2n+2$.

   Let $\alpha_3\in X_3=\left\{\{(w_0)_{v_0},(w_i)_{v_0}\}:i\neq0\right\}$. Then, $|X_3|=n$ and $\deg(\alpha_3)=2m+n+1$.

   Let $\alpha_4\in X_4=\left\{(w_i)_{v_0}:i\neq0\right\}$. Then, $|X_4|=n$ and $\deg(\alpha_4)=2m+2$.

   Let $\alpha_5\in X_5=\left\{\{(w_0)_{v_j},(w_i)_{v_j}\}:i,j\neq0\right\}$. Then, $|X_5|=nm$ and $\deg(\alpha_5)=n+3$.

    Let $\alpha_6\in X_6=\left\{\{(w_i)_{v_{j_1}},(w_i)_{v_{j_2}}\}:j_1\neq j_2\neq0\right\}$. Then, $|X_6|=nm$ and $\deg(\alpha_6)=m+3$.

   Let $\alpha_7\in X_7=\left\{(w_i)_{v_j}:i,j\neq0\right\}$. Then,  $|X_7|=nm$ and $\deg(\alpha_7)=4$.

  These subsets are distinct and the following is true, $$\bigsqcup_{i=0}^7X_i = V(T(S_n\square S_m))$$

\textbf{Case 1:} Let $n> 2(m-1)$.

  Then, $2n > 2m+n-2$. It follows that $2n+2>2m+n$ and then $2n+2\ge 2m+n + 1$. Thus,
  
  $$deg(\alpha_0)\ge deg(\alpha_1)\ge deg(\alpha_2)\ge deg(\alpha_3)\ge deg(\alpha_4)\ge deg(\alpha_5)\ge deg(\alpha_6)\ge deg(\alpha_7)$$

Observe that there are $2m+n+1$ vertices in $X_0\cup X_1\cup X_2\cup X_3$, and each vertex has degree at least $2m+n$.
The vertices in $X_4$ through $X_7$ have degree no greater than $2m+2$. There are no more vertices other than the $|X_0\cup X_1\cup X_2\cup X_3|$ elements of degree greater than or equal to $2m+n$. Therefore, the $m$-degree is $2m+n+1$.

\textbf{Case 2:} Let $n\leq2(m-1)$.

We will show that the vertices in $X_3$ have degree at least $2n+2$.
Hence, we must show that $2n+2 \le 2m+n+1$.

If $n\leq2m-2$, then $2n\leq 2m+n-2$. Then, $2n+2\leq 2m+n< 2m+n+1$. Therefore, if $n\leq2(m-1)$, then $2n+2< 2m+n+1$.

Next, we show that the cardinality of the sets $X_0, X_1, X_2, X_3$ is greater than or equal to $2n+3$.
In other words, we must show that $1+m+m+n\ge 2n+3$.
This was proven above.

Now that we understand that the vertices in the set $X_3$ have a greater degree than the vertices in the set $X_2$, we can follow a similar exercise that we did in case 1.

Suppose that the $m$-degree is $2n+4$. By the definition of the $m$-degree, there must exist $2n+4$ vertices of degree $2n+3$. There exist $n+m+1$ vertices from $X_0,X_1,X_3$ of at least degree $2n+2$.
However, $n+m+1< 2n+1 <2n+2$, and no other set $X_i$ has vertices with degree greater than or equal to $2n+2$.
Thus, the $m$-degree of $T(S_n\square S_m) \not \ge 2n+4$.
Therefore $m(T(S_n\square S_m))= 2n+3$.

\end{lemma}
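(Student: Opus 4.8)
The plan is to compute the entire degree sequence of $T(S_n\square S_m)$ explicitly and then read off the $m$-degree. The key inputs are the two total-graph degree formulas: a vertex coming from $v\in V(G)$ has degree $2\deg_G(v)$, since in $T(G)$ it is adjacent both to its $G$-neighbours and to the edges incident to it, while an edge $\{u,v\}\in E(G)$, viewed as a vertex of $T(G)$, has degree $\deg_G(u)+\deg_G(v)$, namely its two endpoints together with the edges sharing an endpoint. So I would first record the four $G$-degrees in $S_n\square S_m$: using the inner-graph notation, $\deg_G(w_0)_{v_0}=n+m$, $\deg_G(w_0)_{v_j}=n+1$ for $j\neq0$, $\deg_G(w_i)_{v_0}=m+1$ for $i\neq0$, and $\deg_G(w_i)_{v_j}=2$ for $i,j\neq0$.

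Next I would partition $V(T(S_n\square S_m))$ into eight classes $X_0,\dots,X_7$: the four vertex types just listed, together with the four edge types (the edges at $(w_0)_{v_0}$ running in the $S_n$-direction and in the $S_m$-direction, and the two families of leaf-edges sitting inside the inner graphs). Each formula then assigns a single degree to an entire class; for instance the $m$ edges joining $(w_0)_{v_0}$ to the $(w_0)_{v_j}$ have degree $(n+m)+(n+1)=2n+m+1$, while the two families of leaf-edges have degrees $n+3$ and $m+3$. Checking that the class sizes sum to $|V(G)|+|E(G)|$ confirms the partition is exhaustive and disjoint, so the degree sequence is fully described by eight (degree, multiplicity) pairs.

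With these in hand, the $m$-degree is the largest $i$ such that at least $i$ vertices have degree at least $i-1$; write $c(d)$ for the number of vertices of degree at least $d$. Among the eight degree values, the only comparison not forced by $n\ge m\ge3$ is between the class of degree $2n+2$ (the vertices $(w_0)_{v_j}$) and the class of degree $2m+n+1$ (the edges from $(w_0)_{v_0}$ to the $(w_i)_{v_0}$), and these swap order exactly at $n=2(m-1)$; this is the natural case split. When $n>2(m-1)$, the top four classes $X_0\cup X_1\cup X_2\cup X_3$ contribute exactly $2m+n+1$ vertices of degree at least $2m+n$, while every other vertex has degree less than $2m+n$, so $c(2m+n)=2m+n+1$ and the $m$-degree is $2m+n+1$. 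When $n\le2(m-1)$ the ordering flips: I would show $c(2n+2)\ge 2m+n+1\ge 2n+3$ for the lower bound, and that only the $n+m+1$ vertices of $X_0\cup X_1\cup X_3$ reach degree $2n+3$, so $c(2n+3)=n+m+1<2n+4$, forcing the $m$-degree to equal $2n+3$.

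The main obstacle is precision in the bookkeeping rather than any single idea: the candidate degrees $2n+2$, $2m+n+1$, $2m+2$, $n+3$, $m+3$ are tightly clustered and several of the governing inequalities turn on the exact case hypothesis, so I must keep ``degree $\ge i-1$'' carefully separated from ``degree $\ge i$'' when counting, and verify both directions of each $m$-degree identity (enough vertices to force $m(G)$ up to the claimed value, and too few to let it exceed that value). The coincidence $2m+n+1=2n+3$ at $n=2(m-1)$ provides a welcome consistency check that the two cases agree on the boundary.
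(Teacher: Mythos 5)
Your proposal is correct and takes essentially the same route as the paper: the identical eight-class partition of $V(T(S_n\square S_m))$ into four vertex types and four edge types with the same degrees and multiplicities, the same case split at $n=2(m-1)$ driven by comparing $2n+2$ against $2m+n+1$, and the same two-sided count of high-degree vertices in each case. Your explicit appeal to the total-graph degree formulas $\deg_{T(G)}(v)=2\deg_G(v)$ and $\deg_{T(G)}(\{u,v\})=\deg_G(u)+\deg_G(v)$ simply makes the paper's asserted degrees self-justifying, and your upper-bound inequality $n+m+1<2n+4$ is in fact cleaner than the paper's $n+m+1<2n+1$, which fails at $n=m$ though the conclusion is unaffected.
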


\begin{proof}[Proof of Theorem~\ref{thm:line_tot}] \

We will now show by conditional cases, we can color the graph $T(S_n\square S_m)$ either with $2m+n+1$ colors or with $2n+3$ colors.

\textbf{Case 1: } Let $n>2(m-1)$

The vertex in $X_0$ is $(w_0)_{v_0}$. 
The vertices in $X_1$ are the edges of the form $\{(w_0)_{v_0},(w_0)_{v_j}\}$ such that $j\in[m]$.
The vertices in $X_2$ are $(w_0)_{v_j}$ themselves.
Finally, the vertices in $X_3$ are $(\{w_0,w_i\})_{v_0}$ such that $i\in[n]$. It is worth noting that the subsets $X_0, X_1,X_3$ form a complete sub-graph of order $m+n+1$. Using this information, we can now describe an algorithm that will color the graph using $2m+n+1$ colors.

\begin{enumerate}
    \item First, assign $(w_0)_{v_0}$ the color 1. Then assign each edge adjacent to $(w_0)_{v_0}$ colors from 2 to $m+n+1$. By the same reasoning of [\ref{thm:line_chrom}], the $b$-chromatic number of this sub-graph is $m+n+1$.

\item Next, assign the vertices $(w_0)_{v_j}$, such that $j\in[m]$, the colors $c\in[m+n+2,2m+n+1]$.

\item The edge $\{(w_0)_{v_0},(w_0)_{v_j}\}$ is already adjacent to the colors assigned to the edges adjacent to $(w_0)_{v_0}$ and to the color 1. But it is not adjacent to the colors of the vertices $(w_0)_{v_i}$ such that $i\neq j$. Color any $m-1$ edges in the inner star $(S_n)_{v_j}$ with these missing colors. This makes the edges $\{(w_0)_{v_0},(w_0)_{v_j}\}$ $b$-vertices.

\item  The vertex $(w_0)_{v_j}$ is already adjacent to $m+1$ colors, therefore we color the remaining edges $(\{w_0,w_i\})_{v_j}$, such that $j\neq0$ with $n-(m-1)$ additional colors. In total, $(w_0)_{v_j}$ is adjacent to $n+2$ colors. Now color each $(w_i)_{v_j}$ such that $i,j\neq 0$, with $2m-1$ remaining colors. This is possible as $2m-1\leq n$. The coloring for the  vertices $(w_i)_{v_j}$ such that $i,j\neq0$ doesn't matter as they are only adjacent to $(w_0)_{v_j}$ and $(w_i)_{v_0}$. We haven't colored these. Therefore, no matter what order we choose to color in, we will have a proper coloring. This coloring makes the vertices $(w_0)_{v_j}$ $b$-vertices. Any remaining vertices of the form $(w_i)_{v_j}$ for $i,j\neq0$ can be given the color 1.

\item
Each $\left\{(w_0)_{v_0},(w_i)_{v_0}\right\}$ is not a $b$-vertex as they are not adjacent to the $m$ colors of each $(w_0)_{v_j}$ such that $j\neq0$.
Each edge in $(S_n)_{v_0}$ corresponds to $m$ unique edges of the form $\left\{(w_i)_{v_0},(w_i)_{v_j}\right\}$ such that $j\in[m]$. Assign the edges $\left\{(w_i)_{v_0},(w_i)_{v_j}\right\}$ the colors $c((w_0)_{v_j})$ for $j\in[m]$.
This makes each $\left\{(w_0)_{v_0},(w_i)_{v_0}\right\}$ a $b$-vertex.

\item The rest of the vertices are of low degree. The vertices adjacent and the edges incident to these vertices are already colored. So we color these vertices in a way that ensures the graph is properly colored. We have at least 6 additional colors to choose from.

\end{enumerate}

\textbf{Case 2: } Let $n\leq 2(m-1)$.

We first define the following mapping,

$$\phi_j: E\left((S_n)_{v_0}\right)\xrightarrow{} V\left((S_n)_{v_j}\right)\setminus\left\{(w_0)_{v_j}\right\}, \text{ where } \phi_j\left(\left\{w_0,w_i\right\}_{v_0}\right) = (w_i)_{v_j}$$

In other words, $\phi$ maps an edge in the central inner star to one non-central vertex in each non-central inner star. Observe that $\phi$ is a bijection, so therefore must be invertible.

Our algorithm will start similarly to the former.

\begin{enumerate}
    \item First, assign $(w_0)_{v_0}$ the color 1. Then assign each $(\{w_0,w_i\})_{v_0}$ , a color $i\in[2,n+1]$.

    \item Then, assign each $(w_0)_{v_j}$ a color $j\in[n+2, n+m+1]$.

    \item Then, choose the edges $\left\{(w_0)_{v_0},(w_0)_{v_j}\right\}$ where $j\in [n-m+2]$ and assign them colors $c\in[n+m+2, 2n+3]$.
    For us to be able to choose $n-m+2$ edges, it must be true that $m\geq n-m+2$. By the condition of this case, this is true.

We have now established what our $b$-vertices will be. Steps 4 through 8 of this algorithm will make sure these vertices are $b$-vertices.

\item Observe that $(w_0)_{v_0}$ is a $b$-vertex. Next, we will color the edges and vertices adjacent to each $(w_0)_{v_j}$ with certain colors such that each $(w_0)_{v_j}$ becomes a $b$-vertex.
Let $e_i = (\{w_0, w_i\})_{v_0}$. Then, assign $\phi_j(e_i)$ the color $c(e_i)$ for $i\in[n]$.

\item Let $j\in [n-m+2]$. 
Each vertex $(w_0)_{v_j}$ is missing the colors $c((w_0)_{v_k})$ for $k\ne 0, j$, and the colors $c(\{(w_0)_{v_0},(w_0)_{v_{k'}}\})$ for $k'\in [n-m+2]\setminus \{j\}$.
So, we need $(m-1)+(n-m+1)=n$ additional colors.
We will use the $n$ uncolored edges of the form $\{w_0, w_i\}_{v_j}$ to fill in these missing colors.
We can color these edges in any order and maintain a proper coloring.

\item Now we make the remaining vertices $(w_{0})_{v_j}$ into $b$-vertices.
These vertices are also missing the $(m-1)$ colors  $c((w_0)_{v_k})$ where $k\neq j, 0$.
In addition, they are missing all of the $n-m+2$ colors $c(\{(w_0)_{v_0},(w_0)_{v_{k'}}\})$, where $k'\in [n-m+2]$.
We begin by coloring $n-m+2$ of the edges $\{w_0, w_i\}_{v_j}$ with the colors $c(\{(w_0)_{v_0},(w_0)_{v_{k'}}\})$, where $k'\in [n-m+2]$.
(We cannot color the edge $\{(w_0)_{v_0}, (w_0)_{v_j}\}$ with any of these $n-m+2$ colors, because this edge is adjacent to the edges $\{(w_0)_{v_0}, (w_0)_{v_{k'}}\}$.)
Next we color the remaining  $n-(n-m+2)$ edges $\{w_0, w_i\}_{v_j}$ and the edge $\{(w_0)_{v_0}, (w_0)_{v_j}\}$ with the colors $c((w_0)_{v_k})$ where $k\neq j, 0$, in any order.
We have exactly $m-1$ edges to color with exactly $m-1$ missing colors.
At the conclusion of this step, all of the vertices $(w_{0})_{v_j}$, where $j\in [m]$, are $b$-vertices.

\item Each $\left\{(w_0)_{v_0},(w_i)_{v_0}\right\}$ is not a $b$-vertex as they are not adjacent to the $m$ colors of each $(w_0)_{v_j}$ such that $j\neq0$.
Each edge in $(S_n)_{v_0}$ corresponds to $m$ unique edges of the form $\left\{(w_i)_{v_0},(w_i)_{v_j}\right\}$ such that $j\in[m]$. Assign the edges $\left\{(w_i)_{v_0},(w_i)_{v_j}\right\}$ the colors $c((w_0)_{v_j})$ for $j\in[m]$.
This makes each $\left\{(w_0)_{v_0},(w_i)_{v_0}\right\}$ a $b$-vertex. 

\item Now, we have $2n+3$ vertices of different color classes adjacent to every other color class. All we need now is a proper coloring. If a vertex has a neighborhood of vertices, $\{\alpha_1,\ldots,\alpha_k\}$, then assign the vertex a color $c\notin\{\alpha_1,\ldots,\alpha_k\}:c\leq 2n+3$. To show this is possible, observe that only uncolored elements in $T(S_n\square S_m)$ are elements in $X_4\cup X_5\cup X_6\cup X_7$, with the largest neighborhood of any vertex in this subset being $2m+2$. As such, the $\max\{\alpha_1,\ldots,\alpha_k\}=2m+2$ and because $2m+2<2n+3$, we can always choose a color and maintain a proper coloring. 
\end{enumerate}

Therefore, by our algorithms, if $n>2(m-1)$ then, $\varphi(T(S_n\square S_m))=2n+m+1$ and if $n\leq 2(m-1)$, then $\varphi(T(S_n\square S_m))=2n+3$.
\end{proof}

\section{Power Graph of Star Graphs}\label{sec:powers}

\begin{fact}\label{fact 1}
    For any graph $G$ of order $n$, if $Diam(G)\le p$, then $\varphi(G^p) = n$, with $p\ge2$.
\end{fact}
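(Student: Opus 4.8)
The plan is to prove Fact~\ref{fact 1} directly from Proposition~\ref{squeeze}, which gives $\varphi(G^p)\le m(G^p)$. The key observation is that the diameter hypothesis $\mathrm{Diam}(G)\le p$ forces the power graph $G^p$ to be the complete graph on $n$ vertices. First I would unpack the definition of the power graph (Definition~\ref{powergraphdef}): for any two distinct vertices $u,v\in V(G)=V(G^p)$, we have $\{u,v\}\in E(G^p)$ precisely when $d(u,v)\le p$. Since $\mathrm{Diam}(G)$ is the maximum distance between any pair of vertices, the assumption $\mathrm{Diam}(G)\le p$ guarantees $d(u,v)\le p$ for every pair $u,v$. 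Hence every pair of distinct vertices is adjacent in $G^p$, so $G^p\cong K_n$.

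Once $G^p\cong K_n$ is established, the conclusion is essentially immediate: the complete graph $K_n$ has chromatic number $n$, and every proper coloring of $K_n$ necessarily uses exactly $n$ distinct colors (one per vertex), so every vertex is trivially a $b$-vertex. Thus $\varphi(K_n)=n$, giving $\varphi(G^p)=n$. Alternatively, one can invoke Proposition~\ref{squeeze} together with $\omega(K_n)=\chi(K_n)=m(K_n)=n$ to squeeze $\varphi(G^p)=n$ without argument. I would phrase the final step using the squeeze proposition for uniformity with the rest of the paper.

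The role of the hypothesis $p\ge 2$ deserves a brief remark. For a connected graph $G$ of order $n\ge 2$ that is not already complete, the diameter is at least $2$, so the case $p=1$ would only yield $G^1=G$, which need not be complete even when $\mathrm{Diam}(G)\le 1$; the stated bound $p\ge 2$ simply ensures the fact is applied in the regime of genuine power graphs where the diameter condition is meaningful. Since the argument above only uses $d(u,v)\le p$ for all pairs, the hypothesis $p\ge 2$ is not logically needed for the implication itself, but including it matches how the fact will be applied to power graphs of Cartesian products of stars in the subsequent theorems.

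I do not anticipate any serious obstacle here, as the statement reduces to recognizing that the diameter bound collapses $G^p$ to a complete graph. The only point requiring a moment's care is confirming that the power-graph edge set as defined in Definition~\ref{powergraphdef} includes \emph{all} original edges together with all pairs at distance at most $p$ (so that no pair is omitted), and that $d(u,v)\ge 1$ for distinct vertices guarantees the formula applies to every distinct pair. With that checked, the proof is a two-line reduction to the complete graph followed by an application of Proposition~\ref{squeeze}.
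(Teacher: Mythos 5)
Your proposal is correct and follows essentially the same route as the paper's proof: the hypothesis $\mathrm{Diam}(G)\le p$ forces $G^p$ to be the complete graph $K_n$, whence $\varphi(G^p)=n$; your write-up merely makes explicit the unpacking of Definition~\ref{powergraphdef} and the appeal to Proposition~\ref{squeeze} that the paper leaves implicit. Your side remark that $p\ge 2$ is not logically needed for the implication is also accurate.
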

\begin{proof}
    Suppose $Diam(G)\le p$, it is trivial to see that $G^p$ is a complete graph. So $\varphi(G^p) = n$.
\end{proof}

\begin{thm} \label{power1simple}
   \textit{ Let $G$ be the $k$-graph power of $S_n$. It follows that,} $$\varphi(G)=\begin{cases}
       2, & \textit{If $k=1$}\\
       n+1, & \textit{If $k>1$}
   \end{cases}$$
\end{thm}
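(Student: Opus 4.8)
The plan is to dispose of the two cases separately, in each case reducing to a result already established earlier in the paper. The case $k=1$ is immediate: here $G = S_n$ itself, and the earlier proposition computing $\varphi(S_n)$ already gives $\varphi(S_n) = 2$ whenever $n>0$. So nothing remains to be done for $k=1$, and I would simply cite that proposition.

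For $k>1$ (that is, $k \ge 2$), the key observation is a distance computation in $S_n$. First I would verify that $\mathrm{Diam}(S_n) = 2$: the central vertex $w_0$ is adjacent to every leaf, so $d(w_0, w_i) = 1$, while any two distinct leaves $w_i, w_j$ satisfy $d(w_i, w_j) = 2$ via the path $w_i\, w_0\, w_j$, since leaves are pairwise non-adjacent by the definition of the star. Hence the largest distance between two vertices is exactly $2$.

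With this in hand, since $k \ge 2 = \mathrm{Diam}(S_n)$, I can invoke Fact~\ref{fact 1} directly: for a graph of diameter at most $p$ with $p \ge 2$, the $p$-th power is complete and its $b$-chromatic number equals its order. The order of $S_n$ is $n+1$ (one central vertex together with $n$ leaves), so $S_n^k$ is the complete graph $K_{n+1}$ and therefore $\varphi(S_n^k) = n+1$ for every $k \ge 2$, which finishes the second case.

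The argument is essentially a corollary of Fact~\ref{fact 1}, so there is no substantial obstacle. The only points requiring care are bookkeeping ones: the order of $S_n$ is $n+1$ rather than $n$, so the conclusion of Fact~\ref{fact 1} must be read as $\varphi(S_n^k) = n+1$; and the hypothesis $p \ge 2$ of Fact~\ref{fact 1} is exactly what forces the split at $k=1$, since $S_n^1 = S_n$ is not complete and the fact does not apply there. This split is precisely what produces the two distinct values in the piecewise formula.
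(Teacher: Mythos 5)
Your proposal is correct and follows the same route as the paper: the case $k=1$ reduces to the earlier proposition that $\varphi(S_n)=2$, and the case $k>1$ applies Fact~\ref{fact 1} after observing that $\mathrm{Diam}(S_n)=2$, so that $S_n^k$ is complete on $n+1$ vertices. Your explicit verification of the diameter and the bookkeeping remark that $|V(S_n)|=n+1$ are minor elaborations of the paper's one-line argument, not a different method.
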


\begin{proof}\
  Observe $\varphi(S_n^1)=2$. For all $n>1, Diam(S_n)=2$, by Fact~\ref{fact 1}, if $k>1$, then $\varphi\left(S_n^k\right)=n+1$.
\end{proof}

We will prove the following useful lemmas for the second and third parts of Theorem \ref{thm4.2}.
\begin{lemma} \label{lemma 5}
    
\textit{The $m$-degree of $\left(S_n\square S_m\right)^2$ is $m+n+2$}.

\end{lemma}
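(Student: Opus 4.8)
The plan is to compute the entire degree sequence of $(S_n\square S_m)^2$ explicitly and then read off the $m$-degree directly from its definition. The one tool I need is the standard fact that distances add across a Cartesian product, namely $d_{S_n\square S_m}\big((w_i)_{v_j},(w_{i'})_{v_{j'}}\big)=d_{S_n}(w_i,w_{i'})+d_{S_m}(v_j,v_{j'})$, where each star-distance is $0$ (same vertex), $1$ (one endpoint is a central vertex), or $2$ (two distinct non-central vertices). In the square, a vertex is adjacent to exactly those vertices at combined distance $1$ or $2$, so its degree is the number of vertices at combined distance at most $2$.

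First I would partition $V$ into the four natural types: the global center $(w_0)_{v_0}$; the $m$ inner centers $(w_0)_{v_j}$ with $j\neq0$; the $n$ central-column leaves $(w_i)_{v_0}$ with $i\neq0$; and the $nm$ leaf--leaf vertices $(w_i)_{v_j}$ with $i,j\neq0$. For each type I would count, via the additive distance, how many vertices sit at distance $\ge 3$ and subtract from the total $nm+n+m$ of other vertices. This yields
\[
\deg\big((w_0)_{v_0}\big)=nm+n+m,\qquad \deg\big((w_0)_{v_j}\big)=2n+m,\qquad \deg\big((w_i)_{v_0}\big)=2m+n,\qquad \deg\big((w_i)_{v_j}\big)=n+m+1.
\]
Here $(w_0)_{v_0}$ reaches everything; for $(w_0)_{v_j}$ only the $n(m-1)$ vertices $(w_{i'})_{v_{j'}}$ with $i'\neq0$ and $j'\neq 0,j$ lie at distance $3$; the count for $(w_i)_{v_0}$ is symmetric; and the leaf--leaf count excludes the $mn-1$ vertices of combined distance $\ge 3$.

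Next, under the standing assumption $n\ge m$ (with $m\ge 2$, the relevant range), I would verify the ordering $nm+n+m\ge 2n+m\ge 2m+n\ge n+m+1$ and sort the sequence: one term $nm+n+m$, then $m$ terms $2n+m$, then $n$ terms $2m+n$, then $nm$ terms $n+m+1$. The $m$-degree then follows immediately from $m(G)=\max\{i:\deg(x_i)\ge i-1\}$. Every one of the $nm+n+m+1$ vertices has degree at least $n+m+1$, so the $(n+m+2)$-th largest degree equals $n+m+1=(n+m+2)-1$, giving $m(G)\ge n+m+2$; conversely only the $1+m+n$ vertices of the first three types have degree at least $n+m+2$, so the $(n+m+3)$-th vertex has degree $n+m+1<(n+m+3)-1$, forcing $m(G)\le n+m+2$. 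Hence $m\big((S_n\square S_m)^2\big)=m+n+2$.

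The main obstacle is the leaf--leaf degree count: it requires carefully separating the combined-distance-$\ge 3$ vertices into the $(n-1)$ same-column contributions at distance $3$, the $(m-1)$ same-row contributions at distance $3$, and the $(n-1)(m-1)$ contributions at distance $4$, then checking these telescope to $mn-1$. Everything else is routine once the degree ordering is justified from $n\ge m\ge 2$; that hypothesis is also exactly what makes the upper bound sharp, since the leaf--leaf degree $n+m+1$ sits one below the threshold $n+m+2$.
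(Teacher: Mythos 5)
Your proposal is correct and follows essentially the same route as the paper: both partition the vertices into the same four types, arrive at the same degree sequence (one vertex of degree $nm+n+m$, $m$ of degree $2n+m$, $n$ of degree $2m+n$, and $nm$ of degree $n+m+1$), and read off the $m$-degree by noting that all $nm+n+m+1$ vertices have degree at least $n+m+1$ while only $n+m+1$ vertices have degree exceeding that. The only cosmetic difference is that you count degrees by subtracting the vertices at product-distance at least $3$ (using distance additivity in the Cartesian product), whereas the paper directly counts the distance-$2$ vertices gained in the square.
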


\begin{proof}

High degree vertices in this graph are now $(w_0)_{v_0}$ and vertices with a distance of one from $(w_0)_{v_0}$.
The only vertices that are distance 2 from $(w_0)_{v_0}$ are all $(w_i)_{v_j}$ such that $i,j\neq0$.
There are $nm$ of these vertices, so therefore after the graph power, $(w_0)_{v_0}$ must have a degree of $m + n + (nm)$.
Each $(w_0)_{v_j}$, such that $j\neq0$, originally has degree $n+1$.
The vertices that are distance 2 from $(w_0)_{v_j}$ are the other $m-1$ vertices $(w_0)_{v_k}$ and the $n$ vertices $(w_i)_{v_0}$ such that $i\in[n]$. 
After the graph power, $deg((w_0)_{v_j})=n+1 + (m-1) + n = 2n + m$.
Next, each $(w_i)_{v_0}$, such that $i\neq0$, originally has degree $m+1$.
The vertices that are distance 2 from $(w_i)_{v_0}$ are:
    \begin{itemize}
    \item each of the other $n-1$ vertices $(w_i)_{v_0}$ such that $i\neq0$ and 
    \item each $(w_0)_{v_j}$ such that $j\neq0$. 
    \end{itemize}
    Therefore, after the graph power, the degree of each vertex $(w_i)_{v_0}$ such that $i\neq0$ is $m+1 + (n-1) + m = 2m+n$. The size of this set of high-degree vertices is $m+n+1$, and consequently, there exist $m+n+1$ vertices of at least degree $m+n$.
    To check for a higher $m$-degree, we can consider the vertices $(w_i)_{v_j}$ such that $i,j\neq0$. Before the graph power, these vertices have a degree of $2$. 
    Then, after the graph power $(w_i)_{v_j}$ is adjacent to:
    \begin{itemize}
        \item $(w_0)_{v_0}$;
        \item each of the $n-1$ vertices $(w_k)_{v_j}$ where $j$ is fixed and $k\neq i,0$;  
        \item each of the $m-1$ vertices $(w_i)_{v_k}$ where $i$ is fixed and $k \neq j, 0$.
\end{itemize}
 In total, the degrees of these vertices are $2 + (n-1) + (1) + (m-1) = m+n+1$.
To summarize these claims, we have 1 vertex of degree $nm+n+m$, $m$ vertices of degree $2n+m$, $n$ vertices of degree $2m+n$, and $nm$ vertices of degree $n+m+1$.
Thus, all vertices have degree at least $n+m+1$.
There are $(n+1)(m+1) = nm+n+m+1$, and clearly $nm+n+m+1 > n+m+2$ because $n,m\ge 1$.
If we consider any higher $m$-degree, we need at least $m+n+3$ vertices of degree strictly greater than $m+n+1$, which we do not have (because we only have $m+n+1$ vertices with degree greater than $m+n+3$).
Therefore, $n+m+2$ is the highest $m$-degree achievable.

\end{proof}

\begin{lemma}\label{Lemma 6} 
    
\textit{For $n,m\ge2$, the $m$-degree of $\left(S_n\square S_m\right)^3$ is $2m+2n$.}

\end{lemma}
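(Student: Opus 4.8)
The plan is to compute the degree sequence of $\left(S_n\square S_m\right)^3$ directly, exactly as in the proof of Lemma~\ref{lemma 5}, and then read off the $m$-degree from it. The essential tool is that distances in a Cartesian product add coordinatewise, so for any two vertices
$$d_{S_n\square S_m}\bigl((w_i)_{v_j},(w_{i'})_{v_{j'}}\bigr)=d_{S_n}(w_i,w_{i'})+d_{S_m}(v_j,v_{j'}).$$
Since the only distances occurring in a star are $0$, $1$ (to or from the centre) and $2$ (between two distinct non-central vertices), the diameter of $S_n\square S_m$ is $4$, and a pair of vertices sits at distance exactly $4$ precisely when both coordinates are non-central and differ. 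Consequently, in the third power the \emph{only} non-adjacent pairs are the ``corner'' vertices $(w_i)_{v_j}$ with $i,j\neq 0$ paired with $(w_{i'})_{v_{j'}}$ where $i'\neq 0,i$ and $j'\neq 0,j$.

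Next I would split $V(S_n\square S_m)$ into the same four natural types used throughout the paper and compute each degree after the power. The vertices $(w_0)_{v_0}$, the $m$ vertices $(w_0)_{v_j}$ with $j\neq 0$, and the $n$ vertices $(w_i)_{v_0}$ with $i\neq 0$ are each within distance $3$ of every other vertex, so in the third power each of these $1+n+m$ vertices is adjacent to all others and has degree $nm+n+m$. For a corner vertex $(w_i)_{v_j}$ with $i,j\neq 0$, the only non-neighbours are the $(n-1)(m-1)$ distance-$4$ corners identified above, giving degree
$$\bigl(nm+n+m+1\bigr)-1-(n-1)(m-1)=2n+2m-1.$$
Thus the degree sequence consists of $1+n+m$ vertices of degree $nm+n+m$ and $nm$ vertices of degree $2n+2m-1$, which correctly accounts for all $(n+1)(m+1)$ vertices.

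With the degree sequence in hand, the $m$-degree is a direct count. Because $(n-1)(m-1)\ge 0$, every vertex has degree at least $2n+2m-1$, and because $n,m\ge 2$ one checks the two inequalities $1+n+m<2n+2m$ and $2n+2m\le nm+n+m+1$ (both equivalent to $(n-1)(m-1)\ge 0$). Listing vertices in non-increasing degree order, the corner vertices occupy the positions beyond index $1+n+m$; the largest index $i$ for which the $i$-th vertex still has degree at least $i-1$ is therefore $i=2n+2m$, since that vertex is a corner of degree $2n+2m-1\ge (2n+2m)-1$, while the next vertex (degree $2n+2m-1$) fails the condition $2n+2m-1\ge 2n+2m$. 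Hence $m\!\left(\left(S_n\square S_m\right)^3\right)=2m+2n$. I expect the only delicate point to be the distance bookkeeping in Step~1 — being careful to verify that every non-corner vertex really lies within distance $3$ of all others and that the corner–corner distance-$4$ count is exactly $(n-1)(m-1)$; once the degrees are pinned down, the $m$-degree computation is routine.
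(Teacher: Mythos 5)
Your proposal is correct and follows essentially the same route as the paper: both compute the degree sequence of $(S_n\square S_m)^3$ --- namely $1+n+m$ vertices of degree $nm+n+m$ and $nm$ corner vertices of degree $2n+2m-1$ --- and then read off the $m$-degree as $2n+2m$. Your only real deviation is justifying the adjacency structure via the coordinatewise distance formula $d_{G\square H}=d_G+d_H$ rather than the paper's step-by-step iteration from the second power, which is a slightly cleaner bookkeeping device but not a different argument (and your aside that both final inequalities are ``equivalent to $(n-1)(m-1)\ge 0$'' is a harmless slip: $1+n+m<2n+2m$ just needs $n+m>1$).
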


\begin{proof}

In the second graph power, the each vertex of the form $(w_0)_{v_j}$ is adjacent to every vertex except $(w_i)_{v_k}$ where $k\neq j,0$, and they are distance one apart. 
Thus, they are now adjacent in the third graph power. 
Similarly, in the second graph power, each vertex $(w_i)_{v_0}$ is adjacent to every vertex except for the leaves $(w_l)_{v_k}$ where $l\neq i$ and $k\neq0$.
Again, these vertices are now adjacent in the third graph power. 

Thus, in the third graph power, the only vertices that are not adjacent to every other vertex are the leaves $(w_{i})_{v_{j}}$, where $i,j \neq 0$.
In particular, $(w_{i})_{v_{j}}\not\sim(w_{k})_{v_{l}}$ iff $i\neq k$, $j\neq l$, and $i,j,k,l\neq0$.

This implies that $(w_0)_{v_0}$ still has a degree of $nm+n+m$. Additionally, $(w_0)_{v_j}$ such that $j\neq0$ and $(w_i)_{v_0}$ such that $i\neq0$ will all have a degree of $nm + n + m$. (This can be seen in Figure~\ref{power3}, where the graph is organized by degree, where the highest degree vertices are closest to the top and the lowest degree vertices are near the bottom.)

\begin{figure}[h]
\centering

\begin{tikzpicture}[scale=0.35]
	\begin{pgfonlayer}{nodelayer}
		\node [style=White] (0) at (0, 12.75) {};
		\node [style=White] (1) at (-4, 9) {};
		\node [style=White] (2) at (0, 9) {};
		\node [style=White] (3) at (4, 9) {};
		\node [style=White] (4) at (-5.25, 2.75) {};
		\node [style=White] (5) at (-1.5, 2.75) {};
		\node [style=White] (6) at (1.75, 2.75) {};
		\node [style=White] (7) at (5, 2.75) {};
		\node [style=White] (8) at (-5.75, 0) {};
		\node [style=White] (9) at (-5.25, 0) {};
		\node [style=White] (10) at (-4.75, 0) {};
		\node [style=White] (11) at (-2,0) {};
		\node [style=White] (12) at (-1.5, 0) {};
		\node [style=White] (13) at (-1, 0) {};
		\node [style=White] (14) at (1.25, 0) {};
		\node [style=White] (15) at (1.75, 0) {};
		\node [style=White] (16) at (2.25, 0) {};
		\node [style=White] (17) at (4.5, 0) {};
		\node [style=White] (18) at (5, 0) {};
		\node [style=White] (19) at (5.5, 0) {};
		\node [style=White] (20) at (12.25, 12.75) {};
		\node [style=White] (21) at (8.25, 9) {};
		\node [style=White] (22) at (12.25, 9) {};
		\node [style=White] (23) at (16.25, 9) {};
		\node [style=White] (24) at (7, 2.75) {};
		\node [style=White] (25) at (10.75, 2.75) {};
		\node [style=White] (26) at (14, 2.75) {};
		\node [style=White] (27) at (17.25, 2.75) {};
		\node [style=White] (28) at (6.5, 0) {};
		\node [style=White] (29) at (7, 0) {};
		\node [style=White] (30) at (7.5, 0) {};
		\node [style=White] (31) at (10.25, 0) {};
		\node [style=White] (32) at (10.75, 0) {};
		\node [style=White] (33) at (11.25, 0) {};
		\node [style=White] (34) at (13.5, 0) {};
		\node [style=White] (35) at (14, 0) {};
		\node [style=White] (36) at (14.5, 0) {};
		\node [style=White] (37) at (16.75, 0) {};
		\node [style=White] (38) at (17.25, 0) {};
		\node [style=White] (39) at (17.75, 0) {};
		\node [style=White] (40) at (24.5, 12.75) {};
		\node [style=White] (41) at (20.5, 9) {};
		\node [style=White] (42) at (24.5, 9) {};
		\node [style=White] (43) at (28.5, 9) {};
		\node [style=White] (44) at (19.25, 2.75) {};
		\node [style=White] (45) at (23, 2.75) {};
		\node [style=White] (46) at (26.25, 2.75) {};
		\node [style=White] (47) at (29.5, 2.75) {};
		\node [style=White] (48) at (18.75, 0) {};
		\node [style=White] (49) at (19.25, 0) {};
		\node [style=White] (50) at (19.75, 0) {};
		\node [style=White] (51) at (22.5, 0) {};
		\node [style=White] (52) at (23, 0) {};
		\node [style=White] (53) at (23.5, 0) {};
		\node [style=White] (54) at (25.75, 0) {};
		\node [style=White] (55) at (26.25, 0) {};
		\node [style=White] (56) at (26.75, 0) {};
		\node [style=White] (57) at (29, 0) {};
		\node [style=White] (58) at (29.5, 0) {};
		\node [style=White] (59) at (30, 0) {};
	\end{pgfonlayer}
	\begin{pgfonlayer}{edgelayer}
		\draw (0) to (1);
		\draw (0) to (2);
		\draw (0) to (3);
		\draw (0) to (4);
		\draw (0) to (5);
		\draw (0) to (6);
		\draw (0) to (7);
		\draw (4) to (8);
		\draw (4) to (9);
		\draw (4) to (10);
		\draw (5) to (11);
		\draw (5) to (12);
		\draw (5) to (13);
		\draw (6) to (14);
		\draw (6) to (15);
		\draw (6) to (16);
		\draw (7) to (17);
		\draw (7) to (18);
		\draw (7) to (19);
		\draw (1) to (8);
		\draw (1) to (11);
		\draw (1) to (14);
		\draw (1) to (17);
		\draw (2) to (9);
		\draw (3) to (10);
		\draw (2) to (12);
		\draw (3) to (13);
		\draw (2) to (15);
		\draw (3) to (16);
		\draw (2) to (18);
		\draw (3) to (19);
		\draw (8) to (9);
		\draw (9) to (10);
		\draw (11) to (12);
		\draw (12) to (13);
		\draw (14) to (15);
		\draw (15) to (16);
		\draw (17) to (18);
		\draw (18) to (19);
		\draw (20) to (21);
		\draw (20) to (22);
		\draw (20) to (23);
		\draw (20) to (24);
		\draw (20) to (25);
		\draw (20) to (26);
		\draw (20) to (27);
		\draw (24) to (28);
		\draw (24) to (29);
		\draw (24) to (30);
		\draw (25) to (31);
		\draw (25) to (32);
		\draw (25) to (33);
		\draw (26) to (34);
		\draw (26) to (35);
		\draw (26) to (36);
		\draw (27) to (37);
		\draw (27) to (38);
		\draw (27) to (39);
		\draw (21) to (28);
		\draw (21) to (31);
		\draw (21) to (34);
		\draw (21) to (37);
		\draw (22) to (29);
		\draw (23) to (30);
		\draw (22) to (32);
		\draw (23) to (33);
		\draw (22) to (35);
		\draw (23) to (36);
		\draw (22) to (38);
		\draw (23) to (39);
            \draw [bend right, style=red edge] (28) to (31);
            \draw [bend right, style=red edge] (28) to (34);
            \draw [bend right=50, style=red edge] (28) to (37);
            \draw [bend right, style=red edge] (29) to (32);
            \draw [bend right, style=red edge] (29) to (35);
            \draw [bend right=50, style=red edge] (29) to (38);
            \draw [bend right, style=red edge] (30) to (33);
            \draw [bend right, style=red edge] (30) to (36);
            \draw [bend right=50, style=red edge] (30) to (39);
            \draw [bend right, style=red edge] (31) to (34);
            \draw [bend right, style=red edge] (32) to (35);
            \draw [bend right, style=red edge] (33) to (36);
            \draw [bend right, style=red edge] (31) to (37);
            \draw [bend right, style=red edge] (32) to (38);
            \draw [bend right, style=red edge] (33) to (39);
            \draw [bend right, style=red edge] (34) to (37);
            \draw [bend right, style=red edge] (35) to (38);
            \draw [bend right, style=red edge] (36) to (39);
            \draw [style=red edge] (21) to (22);
            \draw [style=red edge] (22) to (23);
		\draw [style=red edge] (28) to (29);
		\draw [style=red edge] (29) to (30);
		\draw [style=red edge] (31) to (32);
		\draw [style=red edge] (32) to (33);
		\draw [style=red edge] (34) to (35);
		\draw [style=red edge] (35) to (36);
		\draw [style=red edge] (37) to (38);
		\draw [style=red edge] (38) to (39);
		\draw [style=red edge] (21) to (24);
		\draw [style=red edge] (21) to (25);
		\draw [style=red edge] (21) to (26);
		\draw [style=red edge] (21) to (27);
		\draw [style=red edge] (22) to (24);
		\draw [style=red edge] (22) to (25);
		\draw [style=red edge] (22) to (26);
		\draw [style=red edge] (22) to (27);
		\draw [style=red edge] (23) to (24);
		\draw [style=red edge] (23) to (25);
		\draw [style=red edge] (23) to (26);
		\draw [style=red edge] (23) to (27);
		\draw [style=red edge] (20) to (28);
		\draw [style=red edge] (20) to (29);
		\draw [style=red edge] (20) to (30);
		\draw [style=red edge] (20) to (31);
		\draw [style=red edge] (20) to (32);
		\draw [style=red edge] (20) to (33);
		\draw [style=red edge] (20) to (34);
		\draw [style=red edge] (20) to (35);
		\draw [style=red edge] (20) to (36);
		\draw [style=red edge] (20) to (37);
		\draw [style=red edge] (20) to (38);
		\draw [style=red edge] (20) to (39);
            \draw [style=red edge] (24) to (25);
            \draw [style=red edge] (24) to (26);
            \draw [style=red edge] (24) to (27);
            \draw [style=red edge] (45) to (46);
            \draw [style=red edge] (46) to (47);
            \draw [style=red edge] (44) to (45);
            \draw [style=red edge] (42) to (43);
            \draw [style=red edge] (41) to (42);
            \draw [bend right, style=red edge] (48) to (51);
            \draw [bend right, style=red edge] (48) to (54);
            \draw [bend right=50, style=red edge] (48) to (57);
            \draw [bend right, style=red edge] (49) to (52);
            \draw [bend right, style=red edge] (49) to (55);
            \draw [bend right=50, style=red edge] (49) to (58);
            \draw [bend right, style=red edge] (50) to (53);
            \draw [bend right, style=red edge] (50) to (56);
            \draw [bend right=50, style=red edge] (50) to (59);
            \draw [bend right, style=red edge] (51) to (54);
            \draw [bend right, style=red edge] (52) to (55);
            \draw [bend right, style=red edge] (53) to (56);
            \draw [bend right, style=red edge] (51) to (57);
            \draw [bend right, style=red edge] (52) to (58);
            \draw [bend right, style=red edge] (53) to (59);
            \draw [bend right, style=red edge] (54) to (57);
            \draw [bend right, style=red edge] (55) to (58);
            \draw [bend right, style=red edge] (56) to (59);
		\draw (40) to (41);
		\draw (40) to (42);
		\draw (40) to (43);
		\draw (40) to (44);
		\draw (40) to (45);
		\draw (40) to (46);
		\draw (40) to (47);
		\draw (44) to (48);
		\draw (44) to (49);
		\draw (44) to (50);
		\draw (45) to (51);
		\draw (45) to (52);
		\draw (45) to (53);
		\draw (46) to (54);
		\draw (46) to (55);
		\draw (46) to (56);
		\draw (47) to (57);
		\draw (47) to (58);
		\draw (47) to (59);
		\draw (41) to (48);
		\draw (41) to (51);
		\draw (41) to (54);
		\draw (41) to (57);
		\draw (42) to (49);
		\draw (43) to (50);
		\draw (42) to (52);
		\draw (43) to (53);
		\draw (42) to (55);
		\draw (43) to (56);
		\draw (42) to (58);
		\draw (43) to (59);
		\draw [style=red edge] (48) to (49);
		\draw [style=red edge] (49) to (50);
		\draw [style=red edge] (51) to (52);
		\draw [style=red edge] (52) to (53);
		\draw [style=red edge] (54) to (55);
		\draw [style=red edge] (55) to (56);
		\draw [style=red edge] (57) to (58);
		\draw [style=red edge] (58) to (59);
		\draw [style=red edge] (41) to (44);
		\draw [style=red edge] (41) to (45);
		\draw [style=red edge] (41) to (46);
		\draw [style=red edge] (41) to (47);
		\draw [style=red edge] (42) to (44);
		\draw [style=red edge] (42) to (45);
		\draw [style=red edge] (42) to (46);
		\draw [style=red edge] (42) to (47);
		\draw [style=red edge] (43) to (44);
		\draw [style=red edge] (43) to (45);
		\draw [style=red edge] (43) to (46);
		\draw [style=red edge] (43) to (47);
		\draw [style=red edge] (40) to (48);
		\draw [style=red edge] (40) to (49);
		\draw [style=red edge] (40) to (50);
		\draw [style=red edge] (40) to (51);
		\draw [style=red edge] (40) to (52);
		\draw [style=red edge] (40) to (53);
		\draw [style=red edge] (40) to (54);
		\draw [style=red edge] (40) to (55);
		\draw [style=red edge] (40) to (56);
		\draw [style=red edge] (40) to (57);
		\draw [style=red edge] (40) to (58);
		\draw [style=red edge] (40) to (59);
		\draw [style=blue edge] (41) to (50);
		\draw [style=blue edge] (41) to (49);
		\draw [style=blue edge] (41) to (52);
		\draw [style=blue edge] (41) to (53);
		\draw [style=blue edge] (41) to (55);
		\draw [style=blue edge] (41) to (56);
		\draw [style=blue edge] (41) to (58);
		\draw [style=blue edge] (41) to (59);
		\draw [style=blue edge] (42) to (48);
		\draw [style=blue edge] (42) to (50);
		\draw [style=blue edge] (42) to (51);
		\draw [style=blue edge] (42) to (53);
		\draw [style=blue edge] (42) to (54);
		\draw [style=blue edge] (42) to (56);
		\draw [style=blue edge] (42) to (57);
		\draw [style=blue edge] (42) to (59);
		\draw [style=blue edge] (43) to (49);
		\draw [style=blue edge] (43) to (48);
		\draw [style=blue edge] (43) to (51);
		\draw [style=blue edge] (43) to (52);
		\draw [style=blue edge] (43) to (54);
		\draw [style=blue edge] (43) to (55);
		\draw [style=blue edge] (43) to (57);
		\draw [style=blue edge] (43) to (58);
            \draw [style=blue edge] (44) to (51);
            \draw [style=blue edge] (44) to (52);
            \draw [style=blue edge] (44) to (53);
            \draw [style=blue edge] (44) to (54);
            \draw [style=blue edge] (44) to (55);
            \draw [style=blue edge] (44) to (56);
            \draw [style=blue edge] (44) to (57);
            \draw [style=blue edge] (44) to (58);
            \draw [style=blue edge] (45) to (48);
            \draw [style=blue edge] (45) to (49);
            \draw [style=blue edge] (45) to (50);
            \draw [style=blue edge] (45) to (54);
            \draw [style=blue edge] (45) to (55);
            \draw [style=blue edge] (45) to (56);
            \draw [style=blue edge] (45) to (57);
            \draw [style=blue edge] (45) to (58);
            \draw [style=blue edge] (45) to (59);
            \draw [style=blue edge] (46) to (48);
            \draw [style=blue edge] (46) to (49);
            \draw [style=blue edge] (46) to (50);
            \draw [style=blue edge] (46) to (51);
            \draw [style=blue edge] (46) to (52);
            \draw [style=blue edge] (46) to (53);
            \draw [style=blue edge] (46) to (57);
            \draw [style=blue edge] (46) to (58);
            \draw [style=blue edge] (46) to (59);
            \draw [style=blue edge] (47) to (48);
            \draw [style=blue edge] (47) to (49);
            \draw [style=blue edge] (47) to (50);
            \draw [style=blue edge] (47) to (51);
            \draw [style=blue edge] (47) to (52);
            \draw [style=blue edge] (47) to (53);
            \draw [style=blue edge] (47) to (54);
            \draw [style=blue edge] (47) to (55);
            \draw [style=blue edge] (47) to (56);
            
	\end{pgfonlayer}
\end{tikzpicture}
\caption{$(S_3\square S_4)^1$ on the left, $(S_3\square S_4)^2$ in the center, $(S_3\square S_4)^3$ on the right.}
\label{power3}
\end{figure}
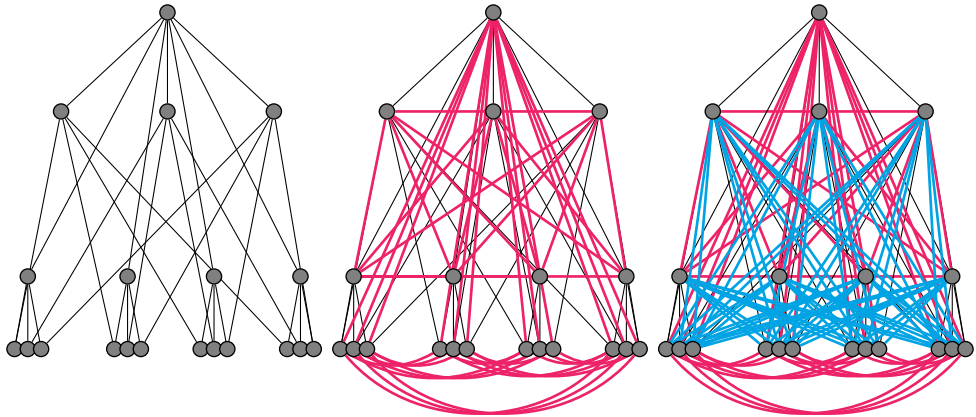

The bottom row seen in Figure~\ref{power3} consists of the vertices $(w_i)_{v_j}$ such that $i,j\neq 0$. There are $nm$ of these vertices.


In the third graph power, for each $i,j>0$, the vertex $(w_i)_{v_j}$ is adjacent to $(w_0)_{v_0}$, $n$ of the vertices $(w_i)_{v_0}$ such that $i\neq0$, $m$ of the vertices $(w_0)_{v_j}$ such that $j\neq0$, $m-1$ corresponding vertices $(w_i)_{v_k}$ such that $k\neq0, j$ and $i\ne 0$ is fixed, and $n-1$ vertices $(w_l)_{v_j}$ such that $l\ne 0,i$ and $j\neq0$ is fixed.
So, the degree of each of the vertices $(w_i)_{v_j}$ such that $i,j\neq0$ is $1+n+m+(m-1) + (n-1)= 2n+2m-1$.

In summary, $n+m+1$ vertices have degree $nm+n+m$, and the remaining $nm$ have degree $2n+2m-1$.
Observe that $nm>n+m-1$ for $n,m\ge2$.
Then, $nm+n+m>2n+2m-1$.
Therefore, all the vertices in the graph have at least degree $2n+2m-1$, and since $nm+n+m+1>nm+n+m>2n+2m-1$, then there exist $2n+2m$ vertices of at least degree $2n+2m-1$. The $m$-degree cannot be any higher because there are not $2n+2m+1$ vertices of any degree higher than $2n+2m-1$. Therefore the $m$-degree is $2n+2m$.

\end{proof}

The bounds for $K_n\square K_m$ have been found in \cite{qn}.
We will use the following lemma, which can be found in \cite[Proposition~7, part c]{qn}.

\begin{lemma}\label{kouider_lemma}
    \textit{Let $K_p$ be the complete graph on $p$ vertices. We show the following:}

(a) \textit{If $p\leq n<p(p-1)$, then $n\leq\varphi(K_n\square K_m)\leq m(m-1)$.}

(b) \textit{If $n\ge p(p-1)$, then $\varphi(K_n\square K_m)=n = \chi(K_n\square K_m)$.}
\end{lemma}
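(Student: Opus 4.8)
The plan is to derive both parts from a single counting bound on $b$-vertices together with the elementary lower bound. I picture $K_n\square K_m$ as an $n\times m$ array, with $K_n$ indexing the $n$ rows and $K_m$ the $m$ columns, so that two cells are adjacent exactly when they share a row or a column; a proper coloring is then precisely one in which every row and every column is rainbow. Since $\chi(K_n\square K_m)=\max\{n,m\}=n$, Proposition~\ref{squeeze} gives $\varphi\ge\chi=n$, which is the common lower bound in (a) and (b). Concretely, in any proper $n$-coloring each column has $n$ cells carrying $n$ distinct colors, so every cell already sees all other colors within its own column and is automatically a $b$-vertex.

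For the upper bounds I claim that \emph{any} $b$-coloring of $K_n\square K_m$ with $k>n$ colors has at most $m(m-1)$ $b$-vertices. Fix such a coloring and set $t=k-n\ge 1$; since a column holds only $n$ cells, each column omits exactly $t$ of the $k$ colors. Suppose $(i,j)$ is a $b$-vertex of color $c$. Its column supplies every color present in column $j$, so the $t$ colors \emph{missing} from column $j$ must all appear among the $m-1$ other cells of row $i$. Pick one such missing color $c^\ast_j$. Then every $b$-vertex in column $j$ lies in a row that contains $c^\ast_j$; but $c^\ast_j$ is absent from column $j$, so its color class occupies at most $m-1$ columns and hence at most $m-1$ distinct rows. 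Consequently column $j$ contains at most $m-1$ $b$-vertices, and summing over all $m$ columns gives at most $m(m-1)$ $b$-vertices in total.

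A $b$-coloring with $k$ colors needs $k$ pairwise distinct $b$-vertices, one per color class, so the bound above forces $k\le m(m-1)$ as soon as $k>n$; equivalently $\varphi\le\max\{n,\,m(m-1)\}$. In the range $m\le n<m(m-1)$ this reads $n\le\varphi\le m(m-1)$, which is part (a). When $n\ge m(m-1)$ the maximum equals $n$, so $\varphi\le n$; combined with $\varphi\ge n$ this yields $\varphi=n=\chi(K_n\square K_m)$, which is part (b).

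I expect the one genuinely delicate step to be the per-column count, namely the assertion that every $b$-vertex of column $j$ is confined to one of the at most $m-1$ rows meeting the class of a color missing from that column. Pinning this down cleanly relies on the two bookkeeping facts that a color missing from a column cannot reappear in that column and that the cells of a single color class lie in distinct rows; once these are in hand, the lower bound and the final arithmetic are routine.
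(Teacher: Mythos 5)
Your proposal is correct, but there is no internal argument to measure it against: the paper does not prove Lemma~\ref{kouider_lemma} at all, it imports the statement from \cite[Proposition~7]{qn} and uses it as a black box (note also that the lemma as printed has a notational slip, with $p$ where $m$ is evidently intended; your reading silently repairs this). Your self-contained proof checks out. Viewing $K_n\square K_m$ as the $n\times m$ rook's graph, each column is a copy of $K_n$ and hence rainbow, so under $k=n+t>n$ colors every column misses exactly $t\ge1$ colors; a $b$-vertex in column $j$ can only reach a color $c^*_j$ missing from that column through its own row; and since the class of $c^*_j$ meets each column at most once and avoids column $j$ entirely, it has at most $m-1$ cells and so meets at most $m-1$ rows, confining column $j$ to at most $m-1$ $b$-vertices. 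Summing over columns bounds the total number of $b$-vertices by $m(m-1)$, while a $b$-coloring with $k$ colors requires $k$ distinct $b$-vertices (one per class), forcing $k\le m(m-1)$ whenever $k>n$; combined with $\varphi\ge\chi=n$ from Proposition~\ref{squeeze} (valid since $n\ge m$, as $\chi(K_n\square K_m)=\max\{n,m\}$), this gives both (a) and (b). What each route buys: the paper's citation buys brevity and defers to established literature, whereas your argument is independent of \cite{qn}, proves the uniform inequality $\varphi(K_n\square K_m)\le\max\{n,\,m(m-1)\}$ from which both cases fall out in one line, and in fact establishes something slightly stronger and of independent interest, namely that \emph{any} proper coloring with more than $n$ colors admits at most $m(m-1)$ $b$-vertices in total. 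The only step worth making fully explicit is the bookkeeping fact that $b$-vertices of distinct color classes are distinct vertices, but that is immediate from Definition~\ref{Definition 2}.
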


\begin{thm} \label{thm4.2}
    \textit{Let $G$ be the $k$-th graph power of the Cartesian product of two graphs $S_n$ and $S_m$, denoted $(S_n\square S_m)^k$, where $n\ge m$. Then the following are true of $\varphi(G)$,}

    \begin{enumerate}
        \item \textit{If $k=1$, then $\varphi(G)= m+2$}

        \item \textit{If $k=2$, then $\varphi(G) = \begin{cases}
            m+n+1, & \textit{If $n>m$}\\
            2n+2, & \textit{If $n=m$}
        \end{cases}$}

        \item \textit{If $k=3$ and $n\ge m$, then;}
        \begin{enumerate}
            \item If $m\leq n< m(m-1)$, then $2n+m+1\leq \varphi(G)\leq m^2+n+1$.
            \item If $n\ge m(m-1)$, then $\varphi(G)=2n+m+1$.
        \end{enumerate}

        \item \textit{If $k\ge4$, then $\varphi(G)= nm+n+m+1$}
    \end{enumerate}
\end{thm}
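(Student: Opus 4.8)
The plan is to treat the four ranges of $k$ separately, anchoring each to the relevant $m$-degree lemma and to the geometric fact that $\mathrm{Diam}(S_n\square S_m)=\mathrm{Diam}(S_n)+\mathrm{Diam}(S_m)=4$. Part~(1) is exactly Theorem~\ref{Theorem_box_prod}. Part~(4) is then immediate from the diameter computation: for $k\ge 4$ the power $(S_n\square S_m)^k$ is the complete graph on all $(n+1)(m+1)=nm+n+m+1$ vertices, so Fact~\ref{fact 1} gives $\varphi(G)=nm+n+m+1$.

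For part~(3) I would first pin down the exact structure of $(S_n\square S_m)^3$. From the proof of Lemma~\ref{Lemma 6}, the only non-adjacent pairs in the third power are double leaves $(w_i)_{v_j},(w_k)_{v_l}$ with $i\ne k$, $j\ne l$ and all indices nonzero; two distinct double leaves are adjacent exactly when they share a coordinate. Hence the $nm$ double leaves induce a copy of the rook graph $K_n\square K_m$, while the remaining $m+n+1$ ``axis'' vertices (the double centre, the $m$ central-leaves, the $n$ leaf-centres) are mutually adjacent and adjacent to everything, forming a universal clique. Thus $(S_n\square S_m)^3$ is the join $K_{m+n+1}\vee(K_n\square K_m)$. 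I would then prove the clean identity $\varphi(K_t\vee H)=t+\varphi(H)$: the $t$ universal vertices must receive $t$ colours absent from $H$ and are automatically $b$-vertices, while the colours used on $H$ form a $b$-colouring of $H$ itself (its $b$-vertices see the universal colours for free), so the equality holds in both directions. Applying this with $t=m+n+1$ together with Lemma~\ref{kouider_lemma} to $\varphi(K_n\square K_m)$ yields both sub-cases: $\varphi(G)=2n+m+1$ when $n\ge m(m-1)$, and $2n+m+1\le\varphi(G)\le m^2+n+1$ otherwise.

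Part~(2) is the substantive case. The upper bound $\varphi\le m+n+2$ is Lemma~\ref{lemma 5}, and the lower bound $\varphi\ge m+n+1$ follows because the same $m+n+1$ axis vertices already form a clique in the \emph{second} power (the double centre, the central-leaves, and the leaf-centres are pairwise within distance $2$), so $\varphi\ge\chi\ge\omega\ge m+n+1$. It remains to decide between $m+n+1$ and $m+n+2$. When $n=m$ I would exhibit a $b$-colouring with $m+n+2=2n+2$ colours: colour a full $n\times n$ permutation matrix of double leaves with one surplus colour $c^\ast$, so that every inner star and every column contains a $c^\ast$-vertex; then every axis vertex sees $c^\ast$ and, being adjacent to all other axis vertices, is a $b$-vertex, and the remaining double leaves are coloured so that one $c^\ast$-vertex also becomes a $b$-vertex.

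The main obstacle is the matching upper bound when $n>m$, i.e.\ showing $m+n+2$ colours are \emph{impossible}. The pivot is that the $m+n+1$ axis vertices form a clique, so they use $m+n+1$ distinct colours and exactly one surplus colour $c^\ast$ lives only on double leaves; moreover, since every axis vertex is adjacent to all others, an axis vertex is a $b$-vertex if and only if it additionally sees $c^\ast$. The class $c^\ast$ is an independent set of double leaves, hence a partial permutation matrix of size at most $\min(n,m)=m$, so it meets at most $m<n$ of the $n$ columns; consequently at least $n-m$ leaf-centre colours cannot be realised by an axis $b$-vertex and would require \emph{double-leaf} $b$-vertices. Two rigidity constraints then collide: a double leaf can be a $b$-vertex only if exactly one of its row and column contains a $c^\ast$-vertex (otherwise it sees $c^\ast$ twice and its closed neighbourhood fails to be rainbow), which forces such a $b$-vertex into a $c^\ast$-free column; and by properness it cannot carry the colour of the leaf-centre of its own column. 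Pushing this bookkeeping to a contradiction for every admissible size of the $c^\ast$-class is the delicate part and is where I expect the real work to lie; once it is settled, $\varphi=m+n+1$ for $n>m$, completing the theorem.
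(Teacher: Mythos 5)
Parts (1), (3) and (4) of your proposal are correct and follow the paper's route almost verbatim: part (1) cites Theorem~\ref{Theorem_box_prod}, part (4) uses the diameter-$4$ observation with Fact~\ref{fact 1}, and for part (3) you identify, exactly as the paper does, that the double leaves induce $K_n\square K_m$ while the $m+n+1$ axis vertices are universal, then invoke Lemma~\ref{kouider_lemma}. Your one genuine improvement is to isolate the identity $\varphi(K_t\vee H)=t+\varphi(H)$ as a standalone lemma; both directions of your argument for it are sound (universal colors cannot recur in $H$, and the $b$-vertices of the $H$-colors must lie in $H$ and restrict to $b$-vertices of $H$), and this makes rigorous the step the paper performs only informally when it says the problem ``simplifies to'' maximizing a coloring of $K_n\square K_m$. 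Your $k=2$ lower bound via the axis clique and your $n=m$ construction with a permutation-matrix surplus class match the paper in substance (the paper is equally terse there, simply coloring the diagonal $(w_i)_{v_i}$ with the new color).

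The genuine gap is the one you name yourself: part (2) with $n>m$, i.e.\ the impossibility of $m+n+2$ colors, is left as unfinished ``bookkeeping,'' and that is the hardest sub-case of the theorem, so as submitted the proposal does not prove the statement. For comparison, the paper closes this case with a short counting argument: the surplus color lives only on double leaves, can occur at most once per inner star $(S_n)_{v_j}$ (leaves of one star are pairwise adjacent in the square) and at most once per row (corresponding leaves $(w_i)_{v_j},(w_i)_{v_k}$ are adjacent through $(w_i)_{v_0}$), while each leaf-centre $(w_i)_{v_0}$ must see the surplus color and its only eligible neighbors are its own row; this forces an injection from the $n$ rows into the $m$ stars, impossible when $n>m$. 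Note, however, that the paper's argument tacitly assumes the $b$-vertex for each leaf-centre color is the leaf-centre itself; your analysis correctly observes that a double leaf of the same color, having degree exactly $m+n+1$ (Lemma~\ref{lemma 5}) and hence needing an exactly rainbow closed neighborhood, is a priori an alternative $b$-vertex that must be excluded. So your framework (the surplus class as a partial permutation matrix of size at most $m$, axis vertices $b$ iff they see the surplus color, rigidity of double-leaf $b$-vertices) is in fact \emph{more} careful than the printed proof at precisely this point --- but carefulness without the concluding contradiction is not a proof. To complete your route you would need to show that a double leaf colored with the color of $(w_i)_{v_0}$ cannot have a rainbow closed neighborhood compatible with the permutation-matrix structure of the surplus class, for every admissible size of that class; alternatively, you could justify and then adopt the paper's injection argument by first proving that each leaf-centre color's $b$-vertex forces a surplus-colored leaf in the corresponding row.
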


\begin{proof} We will begin with a proof by cases. Clearly, as $k$ increases, more connections are available and the $b$-chromatic number increases.

\textbf{Case I.} Let $k=1$. Observe, $(S_n\square S_m)^1=S_n\square S_m$. So, by Theorem~\ref{Theorem_box_prod}, $\varphi\left((S_n\square S_m)^1\right)=m+2$.

\textbf{Case II.} Let $k=2$, meaning that there is an edge between any two vertices with distance two. In Figure~\ref{power2} is the graph $(S_3\square S_3)^2$, the Cartesian product of star graphs $S_3$ and $S_3$ raised to the power of 2. Additionally, in Figure~\ref{power2}, we've highlighted in red the connections added from raising the graph to the power of 2.

\begin{figure}[h]
    \centering
   \begin{tikzpicture}[
every edge/.style = {draw=black,very thin},
 vrtx/.style args = {#1/#2}{%
      circle, draw, thick, fill=white,
      minimum size=5mm, label=#1:#2}, scale=0.6]
	\begin{pgfonlayer}{nodelayer}
		\node [style=White]  (0) at (0, 0) {};
		\node [style=White]  (1) at (0, 1.75) {};
		\node [style=White]  (2) at (-1.75, -1) {};
		\node [style=White]  (3) at (1.5, -1) {};
		\node [style=White]  (4) at (4, 2) {};
		\node [style=White]  (5) at (-0.5, -4.25) {};
		\node [style=White]  (6) at (-4.25, 1.75) {};
		\node [style=White]  (7) at (-4.25, 3.75) {};
		\node [style=White]  (8) at (-6, 0.75) {};
		\node [style=White]  (9) at (-3.5, 1) {};
		\node [style=White]  (10) at (4.25, 4) {};
		\node [style=White]  (11) at (3, 1) {};
		\node [style=White] (12) at (6, 1) {};
		\node [style=White]  (13) at (-1, -3.5) {};
		\node [style=White]  (14) at (1.5, -5) {};
		\node [style=White]  (15) at (-2, -5) {};
	\end{pgfonlayer}
	\begin{pgfonlayer}{edgelayer}
		\draw (6) to (0);
		\draw (4) to (0);
		\draw (5) to (0);
		\draw (2) to (0);
		\draw (1) to (0);
		\draw (3) to (0);
		\draw (8) to (2);
		\draw (9) to (3);
		\draw (8) to (6);
		\draw (6) to (9);
		\draw (6) to (7);
		\draw (7) to (1);
		\draw (10) to (4);
		\draw (4) to (12);
		\draw (4) to (11);
		\draw (11) to (2);
		\draw (12) to (3);
		\draw (10) to (1);
		\draw (13) to (5);
		\draw (5) to (14);
		\draw (5) to (15);
		\draw (13) to (1);
		\draw (14) to (3);
		\draw (15) to (2);
		\draw [style=red edge] (0) to (9);
		\draw [style=red edge] (0) to (7);
		\draw [style=red edge] (0) to (8);
		\draw [style=red edge] (0) to (10);
		\draw [style=red edge] (0) to (11);
		\draw [style=red edge] (0) to (12);
		\draw [style=red edge] (0) to (13);
		\draw [style=red edge] (0) to (14);
		\draw [style=red edge] (0) to (15);
		\draw [style=red edge] (4) to (3);
		\draw [style=red edge] (3) to (6);
		\draw [style=red edge] (3) to (5);
		\draw [style=red edge] (2) to (6);
		\draw [style=red edge] (2) to (4);
		\draw [style=red edge] (2) to (5);
		\draw [style=red edge] (1) to (6);
		\draw [style=red edge] (1) to (4);
		\draw [style=red edge] (1) to (5);
		\draw [style=red edge] (10) to (12);
		\draw [style=red edge] (12) to (11);
		\draw [style=red edge] (11) to (10);
		\draw [style=red edge] (7) to (9);
		\draw [style=red edge] (7) to (8);
		\draw [style=red edge] (8) to (9);
		\draw [style=red edge] (13) to (15);
		\draw [style=red edge] (15) to (14);
		\draw [style=red edge] (14) to (13);
		\draw [style=red edge] (2) to (3);
		\draw [style=red edge] (3) to (1);
		\draw [style=red edge] (1) to (2);
		\draw [style=red edge] (12) to (14);
		\draw [style=red edge] (15) to (8);
		\draw [style=red edge] (7) to (10);
	\end{pgfonlayer}
\end{tikzpicture}
\caption{$(S_3\square S_3)^2$}
\label{power2}
\end{figure}

By Lemma \ref{lemma 5} and Proposition \ref{squeeze}, $\varphi((S_n\square S_m)^2)\leq n+m+2$.
We claim there exists a complete sub-graph $K_{m+n+1}\subsetneq (S_m\square S_n)^2$. Suppose that,

$$V_j = \left\{j\in[m]:(w_0)_{v_j}\right\} \text{ and } V_i = \left\{i\in[n]:(w_i)_{v_0}\right\}$$

Clearly, if $v\in V_j$ and $u\in V_i$, then $(w_0)_{v_0}\sim u$ and $(w_0)_{v_0}\sim v$ by the definition of the Cartesian product. Then, once we apply the graph power, transitively, $v\sim u$ and symmetrically, $u\sim v$. Therefore, each $u$, $v$, and $(w_0)_{v_0}$ form the complete sub-graph $K_{|V_j|+|V_i|+|\{(w_0)_{v_0}\}|}= K_{m+n+1}$. Therefore, $\varphi\left((S_n\square S_m)^2\right)\ge m+n+1$. 

 In any $b$-coloring, we must start by assigning each vertex in the complete sub-graph, $K_{m+n+1}$, a different color from $[1,m+n+1]$.
 Then, all that we have left to color are the vertices $(w_i)_{v_j}$ such that $i,j\neq0$.
If we want to use $m+n+2$ colors, then each vertex of the form  $(w_i)_{v_0}$ must be adjacent to the new color $m+n+2$.
But $(w_i)_{v_0}$ is only adjacent to vertices of the form $(w_i)_{v_j}$, where $j\ne 0$, and $i$ is fixed. Observe, no pair of corresponding leaves, $(w_i)_{v_j}$ and $(w_i)_{v_k}$ can have the same color.
It follows that we can only color at most one unique leaf in each star $(S_n)_{v_j}$ with the new color, $m+n+2$.
Thus, we must have an injection from the set of vertices of the form $(w_i)_{v_0}$ to the set of stars $(S_n)_{v_j}$.
Therefore, a $b$-coloring using $m+n+2$ colors is only possible if $n=m$.
If $n=m$, then we color $(w_i)_{v_i}$ the new color $m+n+2$. 

If $m<n$, then the $b$-chromatic number cannot be $m+n+2$, so $\varphi((S_n\square S_m)^2)\leq m+n+1$. Then, by the first claim, there exists a complete subgraph $K_{m+n+1}\subset(S_n\square S_m)^2$, meaning the clique number is at least $n+m+1$. Therefore, if $m<n$, then $\varphi((S_n\square S_m)^2)=m+n+1$.

\textbf{Case III.} Lemma \ref{Lemma 6} suggests that the maximum coloring uses $2n+2m$ colors. We will show tighter bounds on $\varphi$. 

The induced subgraph of $S\subset (S_n\square S_m)^3$ of all vertices, $(w_i)_{v_j}$ for $i,j\neq0$ is isomorphic to $K_n\square K_m$.
This is because any two vertices $(w_i)_{v_j}$ and $(w_l)_{v_k}$, where $i,j,l,k\neq0$, are adjacent in the third graph power if and only if  $i=l$, or $j=k$.
Each vertex in $(S_n\square S_m)^3$ outside of the subgraph $S$ is adjacent to all of the other vertices in the third graph power.


\begin{figure}[h]
\centering

\begin{tikzpicture}[scale=0.4]
	\begin{pgfonlayer}{nodelayer}
		\node [style=White] (40) at (24.5, 12.75) {};
		\node [style=White] (41) at (20.5, 9) {};
		\node [style=White] (42) at (24.5, 9) {};
		\node [style=White] (43) at (28.5, 9) {};
		\node [style=White] (44) at (19.25, 2.75) {};
		\node [style=White] (45) at (23, 2.75) {};
		\node [style=White] (46) at (26.25, 2.75) {};
		\node [style=White] (47) at (29.5, 2.75) {};
		\node [style=Red] (48) at (18.75, 0) {};
		\node [style=Red] (49) at (19.25, 0) {};
		\node [style=Red] (50) at (19.75, 0) {};
		\node [style=Red] (51) at (22.5, 0) {};
		\node [style=Red] (52) at (23, 0) {};
		\node [style=Red] (53) at (23.5, 0) {};
		\node [style=Red] (54) at (25.75, 0) {};
		\node [style=Red] (55) at (26.25, 0) {};
		\node [style=Red] (56) at (26.75, 0) {};
		\node [style=Red] (57) at (29, 0) {};
		\node [style=Red] (58) at (29.5, 0) {};
		\node [style=Red] (59) at (30, 0) {};
	\end{pgfonlayer}
	\begin{pgfonlayer}{edgelayer}
            \draw (45) to (46);
            \draw (46) to (47);
            \draw (44) to (45);
            \draw (42) to (43);
            \draw (41) to (42);
            \draw [bend right, style=red edge] (48) to (51);
            \draw [bend right, style=red edge] (48) to (54);
            \draw [bend right=50, style=red edge] (48) to (57);
            \draw [bend right, style=red edge] (49) to (52);
            \draw [bend right, style=red edge] (49) to (55);
            \draw [bend right=50, style=red edge] (49) to (58);
            \draw [bend right, style=red edge] (50) to (53);
            \draw [bend right, style=red edge] (50) to (56);
            \draw [bend right=50, style=red edge] (50) to (59);
            \draw [bend right, style=red edge] (51) to (54);
            \draw [bend right, style=red edge] (52) to (55);
            \draw [bend right, style=red edge] (53) to (56);
            \draw [bend right, style=red edge] (51) to (57);
            \draw [bend right, style=red edge] (52) to (58);
            \draw [bend right, style=red edge] (53) to (59);
            \draw [bend right, style=red edge] (54) to (57);
            \draw [bend right, style=red edge] (55) to (58);
            \draw [bend right, style=red edge] (56) to (59);
		\draw (40) to (41);
		\draw (40) to (42);
		\draw (40) to (43);
		\draw (40) to (44);
		\draw (40) to (45);
		\draw (40) to (46);
		\draw (40) to (47);
		\draw (44) to (48);
		\draw (44) to (49);
		\draw (44) to (50);
		\draw (45) to (51);
		\draw (45) to (52);
		\draw (45) to (53);
		\draw (46) to (54);
		\draw (46) to (55);
		\draw (46) to (56);
		\draw (47) to (57);
		\draw (47) to (58);
		\draw (47) to (59);
		\draw (41) to (48);
		\draw (41) to (51);
		\draw (41) to (54);
		\draw (41) to (57);
		\draw (42) to (49);
		\draw (43) to (50);
		\draw (42) to (52);
		\draw (43) to (53);
		\draw (42) to (55);
		\draw (43) to (56);
		\draw (42) to (58);
		\draw (43) to (59);
		\draw (48) to (49);
		\draw (49) to (50);
		\draw (51) to (52);
		\draw (52) to (53);
		\draw (54) to (55);
		\draw  (55) to (56);
		\draw  (57) to (58);
		\draw  (58) to (59);
		\draw (41) to (44);
		\draw (41) to (45);
		\draw  (41) to (46);
		\draw (41) to (47);
		\draw (42) to (44);
		\draw (42) to (45);
		\draw  (42) to (46);
		\draw (42) to (47);
		\draw (43) to (44);
		\draw (43) to (45);
		\draw (43) to (46);
		\draw  (43) to (47);
		\draw  (40) to (48);
		\draw (40) to (49);
		\draw (40) to (50);
		\draw  (40) to (51);
		\draw  (40) to (52);
		\draw (40) to (53);
		\draw  (40) to (54);
		\draw  (40) to (55);
		\draw  (40) to (56);
		\draw  (40) to (57);
		\draw (40) to (58);
		\draw  (40) to (59);
		\draw (41) to (50);
		\draw  (41) to (49);
		\draw  (41) to (52);
		\draw  (41) to (53);
		\draw  (41) to (55);
		\draw  (41) to (56);
		\draw  (41) to (58);
		\draw (41) to (59);
		\draw  (42) to (48);
		\draw  (42) to (50);
		\draw  (42) to (51);
		\draw  (42) to (53);
		\draw  (42) to (54);
		\draw   (42) to (56);
		\draw   (42) to (57);
		\draw   (42) to (59);
		\draw   (43) to (49);
		\draw   (43) to (48);
		\draw   (43) to (51);
		\draw   (43) to (52);
		\draw   (43) to (54);
		\draw   (43) to (55);
		\draw   (43) to (57);
		\draw   (43) to (58);
            \draw   (44) to (51);
            \draw   (44) to (52);
            \draw   (44) to (53);
            \draw   (44) to (54);
            \draw   (44) to (55);
            \draw   (44) to (56);
            \draw   (44) to (57);
            \draw   (44) to (58);
            \draw   (45) to (48);
            \draw   (45) to (49);
            \draw   (45) to (50);
            \draw   (45) to (54);
            \draw   (45) to (55);
            \draw   (45) to (56);
            \draw   (45) to (57);
            \draw   (45) to (58);
            \draw   (45) to (59);
            \draw   (46) to (48);
            \draw   (46) to (49);
            \draw   (46) to (50);
            \draw   (46) to (51);
            \draw   (46) to (52);
            \draw   (46) to (53);
            \draw   (46) to (57);
            \draw   (46) to (58);
            \draw   (46) to (59);
            \draw   (47) to (48);
            \draw   (47) to (49);
            \draw   (47) to (50);
            \draw   (47) to (51);
            \draw   (47) to (52);
            \draw   (47) to (53);
            \draw   (47) to (54);
            \draw   (47) to (55);
            \draw   (47) to (56);
            
	\end{pgfonlayer}
\end{tikzpicture}
\caption{The graph $(S_3\square S_4)^3$ with the sub-graph $K_3\square K_4$ highlighted in red.}
\label{power3,2}
\end{figure}

Observe, we must have at least $m+n+1$ unique colors to color the $m+n+1$ vertices which are outside of $S$. These colors will not be repeated in the sub-graph $S\cong K_n\square K_m$, as that would violate the definition of a proper coloring. Our new question simplifies to, how can we maximize the coloring of $K_n\square K_m$?

Lemma~\ref{kouider_lemma} shows us the bounds for $\varphi(K_n\square K_m)$. So, we can add the additional $n+m+1$ unique colors and then, $n+m+1 + n\leq \varphi((S_n\square S_m)^3)\leq n+m+1 + m(m-1)$ if $n\leq m(m-1)$. In other words, if $m\leq n< m(m-1)$, $$\chi((S_n\square S_m)^3)\leq \varphi((S_n\square S_m)^3)\leq m^2+n+1.$$ 
By part (b) of Lemmma~\ref{kouider_lemma}, if $n\ge m(m-1)$, then $\varphi((S_n\square S_m)^3)=\chi((S_n\square S_m)^3)$. This concludes the proof for Case III.

\textbf{Case IV.} Let $k\ge 4$. 

$Diam\left((S_n\square S_m)^4\right)=4$. By Fact~\ref{fact 1}, for $k\ge 4,\varphi\left(\left(S_n\square S_m\right)^k\right)=\left|V\left(\left(S_n\square S_m\right)^k\right)\right|=\left|V(S_n)\times V(S_m)\right|=(n+1)(m+1)=mn+n+m+1$.
\end{proof}

From Theorem \ref{thm4.2}, we can pull some direct corollaries and exact values for the $b$-chromatic number of particular cases of $K_n\square K_m$.

\begin{thm}\label{rookgraphcor}
   \textit{It follows that,} $\varphi((S_1\square S_3)^3) = 8$, $\varphi((S_2\square S_3)^3) = 9$, $\varphi((S_3\square S_3)^3) = 10$, $\varphi((S_4\square S_3)^3) = 13$, $\varphi((S_5\square S_3)^3) = 15$. For $n\ge 6$, $\varphi((S_n\square S_3)^3) = 2n+4$.
\end{thm}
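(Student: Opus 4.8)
The plan is to reduce everything to rook graphs via the structural decomposition from Case III of the proof of Theorem~\ref{thm4.2}. There, $(S_n\square S_m)^3$ is shown to split into a set of $n+m+1$ ``outer'' vertices, each adjacent to every other vertex of the graph, together with the subgraph induced on the leaves $(w_i)_{v_j}$ with $i,j\neq 0$, which is isomorphic to the rook graph $K_n\square K_m$. Because these $n+m+1$ outer vertices form a clique whose colors can never be reused on the rook subgraph, and each of them is automatically a $b$-vertex, a proper coloring of $(S_n\square S_m)^3$ is a $b$-coloring precisely when its restriction to the rook subgraph is a $b$-coloring of $K_n\square K_m$. Maximizing the number of colors therefore yields the exact identity
\begin{equation*}
\varphi\big((S_n\square S_m)^3\big)=(n+m+1)+\varphi(K_n\square K_m),
\end{equation*}
and setting $m=3$ reduces the whole statement to evaluating $\varphi(K_n\square K_3)$.

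For the outer ranges of $n$ I would apply Lemma~\ref{kouider_lemma} directly. When $n\ge 6$ we have $n\ge m(m-1)=6$, so part~(b) gives $\varphi(K_n\square K_3)=n$, whence $\varphi((S_n\square S_3)^3)=(n+4)+n=2n+4$. The two smallest cases are equally quick: $K_1\square K_3\cong K_3$ gives $\varphi=3$, and for $K_2\square K_3$ the larger factor satisfies $3\ge 2(2-1)$, so part~(b) again gives $\varphi=3$; adding $n+m+1$ recovers the listed values $8$ and $9$.

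The substantive work is the three intermediate cases $n\in\{3,4,5\}$, where Lemma~\ref{kouider_lemma}(a) supplies only the ranges $n\le\varphi(K_n\square K_3)\le 6$. For the lower bounds I would exhibit explicit $b$-colorings of $K_3\square K_3$, $K_4\square K_3$, and $K_5\square K_3$ using $3$, $5$, and $6$ colors respectively; for $K_3\square K_3$ any Latin-square $3$-coloring already makes every vertex a $b$-vertex. A routine count shows the $m$-degree of $K_n\square K_3$ equals $n+2$, which gives $\varphi(K_3\square K_3)\le 5$, while Lemma~\ref{kouider_lemma} caps $\varphi(K_4\square K_3)$ and $\varphi(K_5\square K_3)$ at $6$; so the case $n=5$ is finished by the construction alone.

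What remains, and what I expect to be the main obstacle, are the tight upper bounds $\varphi(K_3\square K_3)\le 3$ and $\varphi(K_4\square K_3)\le 5$, since these follow from no single degree or clique bound and must eliminate every admissible distribution of color-class sizes. For $K_3\square K_3$ the key structural fact is that fixing one size-$3$ (transversal) color class leaves six cells that induce a $6$-cycle, and that the two neighbors of each such cell lying outside the cycle are both forced into the fixed transversal. Tracing the requirement that \emph{every} color possess a $b$-vertex through this $C_6$ shows that the distributions $(3,3,2,1)$ and $(3,2,2,2)$ both fail, so no $4$-coloring is a $b$-coloring, and a similar rainbow-neighborhood analysis rules out a $5$-coloring; an analogous but longer case check on color-class sizes disposes of the $6$-coloring of $K_4\square K_3$.
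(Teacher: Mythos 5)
Your proposal is correct in outline and shares the paper's core architecture: the same reduction $\varphi\bigl((S_n\square S_3)^3\bigr)=(n+4)+\varphi(K_n\square K_3)$ (which the paper uses implicitly via ``it suffices to show''; you make the equivalence of $b$-colorings explicit, a genuine improvement in rigor), the same appeal to Lemma~\ref{kouider_lemma}(b) for $n\ge 6$, and the same explicit colorings for the lower bounds when $n\in\{3,4,5\}$. Where you diverge: you dispatch $n=2$ by applying the lemma with the factors swapped ($3\ge 2(2-1)$), where the paper instead identifies $K_2\square K_3$ with the prism and cites an external result; and you close $n=5$ using the cap $\varphi(K_5\square K_3)\le m(m-1)=6$ from Lemma~\ref{kouider_lemma}(a), which is cleaner than the paper's route, since the paper instead proves a uniform claim that no $b$-coloring of $K_n\square K_3$ with $n+2$ colors exists for $n\in[5]$ and combines it with the $m$-degree bound. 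For the two tight upper bounds, $\varphi(K_3\square K_3)\le 3$ and $\varphi(K_4\square K_3)\le 5$, your method is genuinely different from the paper's: you propose a case analysis on color-class size distributions, using the facts (both of which check out) that the only $4$-color distributions on the nine cells are $(3,3,2,1)$ and $(3,2,2,2)$, and that deleting a transversal from $K_3\square K_3$ leaves an induced $C_6$ whose cells each have both off-cycle neighbors in the transversal; the paper instead classifies the possible induced subgraphs on the four would-be $b$-vertices ($K_3\cup K_1$, $C_4$, $P_4$, $P_3\sqcup K_1$, $2K_2$) and kills each case. You also correctly recognize a point the paper handles only via its uniform argument: since $b$-colorability is not monotone in the number of colors, the $m$-degree bound $n+2$ forces you to exclude both $4$- and $5$-colorings of $K_3\square K_3$, and $6$-colorings of $K_4\square K_3$, separately. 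The one caution is that these exclusions are precisely where essentially all of the paper's labor lies, and your proposal leaves them as sketches (``tracing,'' ``a similar rainbow-neighborhood analysis,'' ``an analogous but longer case check''); the $C_6$ approach is workable --- for instance, the Latin-square coloring $1,2,3,1,2,3$ around the cycle is proper but leaves the transversal color with no $b$-vertex, exactly the obstruction your trace must find in every case --- but until those finite checks are written out, your argument matches the paper in strategy while deferring its substance.
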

\begin{proof}
    For the graph $G=(S_n\square S_m)^3$, we fix $m=3$.
    Recall the relation between $(S_n\square S_m)^3$ and $K_n\square K_m$. We show the maximum $b$-coloring of $K_n\square K_m$ first, and then we add $n+m+1$ unique colors for the remaining vertices of $S_n\square S_m$.
    In other words, it suffices to show,

    \begin{center}
        $\begin{array}{c}
             \varphi(K_1\square K_3)=\varphi(K_2\square K_3)=\varphi(K_3\square K_3)=3  \\
              \varphi(K_4\square K_3)=5\\
              \varphi(K_5\square K_3)=6\\
        \end{array}$
    \end{center}
    
     Then, we'll show for $n>5$, $\varphi(K_n\square K_3)=n$. 
     
    First, observe $K_1\square K_3\cong K_3$ which has a maximum $b$-coloring of 3. Then, $3+1+1 + 3 = 8$. 
    Similarly, we check $K_2\square K_3\cong Y_3$ (the prism graph), which clearly cannot include a 4th $b$-vertex. This was extensively studied in \cite{Ansari}. Then, $3+2+1 + 3 = 9$. 
    For the last three cases, observe that $K_n\square K_3$ can't have a $b$-coloring containing $n+2$ vertices. Suppose, for the sake of contradiction, that a graph coloring does contain $n+2$ colors. Then one copy of $K_n$, $(K_n)_{v_1}$ in particular, must contain unique colors $\{1,2,\ldots, n\}$ and by symmetry let the vertices with colors 1 and 2 be b-vertices.
    Let $(w_1)_{v_1}$ be the vertex which is colored 1, and let  $(w_2)_{v_1}$ be the vertex colored 2.
    If these vertices are $b$-vertices, then they must be adjacent to colors $n+1$ and $n+2$.
    The only available places to color vertices adjacent to 1 and 2 are in the corresponding vertices in the other inner graphs $(K_n)_{v_2}$ and $(K_n)_{v_3}$. 
    By symmetry color $(w_1)_{v_2}$ with the color $n+1$ and $(w_2)_{v_2}$ with the color $n+2$.
    In $(K_n)_{v_3}$, we color $(w_1)_{v_3}$ with color $n+2$, and we color $(w_2)_{v_3}$ with color $n+1$ (so both $(w_1)_{v_1}$ and $(w_1)_{v_2}$ are b-vertices and the coloring is proper).
    Then, if this is a $b$-coloring,  one of our vertices with the color $n+2$ must be a $b$-vertex.
    By symmetry, suppose it is $(w_2)_{v_2}$.
    Observe that this vertex is adjacent to color $n+1$ and color 2, and still needs to be adjacent to the $(n-1)$ remaining colors.
    But there are only $(n-2)$ uncolored vertices which are adjacent to $(w_2)_{v_2}$.
 Therefore, $\varphi(K_n\square K_3)\leq n+1$ for $n\in[5]$.
    Then, we have the following $b$-colorings for $\varphi(K_3\square K_3)=3$, $\varphi(K_4\square K_3)=5$, and $\varphi(K_5,\square K_3)=6$ in Figure \ref{Figure 10}, Figure \ref{Figure 11}, and Figure \ref{Figure 12} respectively. We write each inner graph $K_n$ as a column and each outer graph $K_m$ as a row. The circled numbers are our $b$-vertices. Note that all colors in a row must be distinct, and all colors in a column must also be distinct. In Figure~\ref{gridmap} is the mapping from the collection of vertices $\bigcup_{j=1}^n\bigcup_{i=1}^n(w_i)_{v_j}=V(K_m\square K_n)$ to an $n\times n$ grid for $n=3$.

    \begin{figure}[h]
        \centering
        \begin{tabular}{|c|c|c|}
        \hline
             $(w_1)_{v_1}$ & $(w_1)_{v_2}$ & $(w_1)_{v_3}$ \\
             \hline
             $(w_2)_{v_1}$ & $(w_2)_{v_2}$ & $(w_2)_{v_3}$\\
             \hline
              $(w_3)_{v_1}$& $(w_3)_{v_2}$ &  $(w_3)_{v_3}$\\
             \hline
        \end{tabular}
        \caption{Grid for $K_3\square K_3$}
        \label{gridmap}
    \end{figure}

    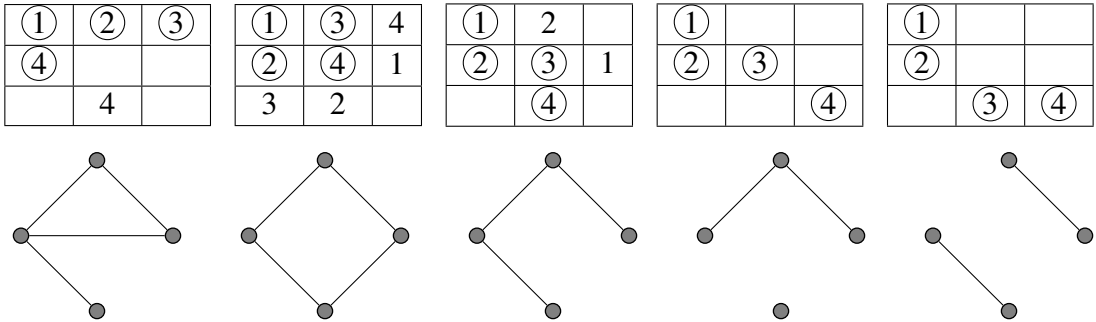
\begin{figure}[h]
        \centering
        \begin{tabular}{|c|c|c|}
        \hline
             \circled{1} & \circled{2} & \circled{3} \\
             \hline
             \circled{4} &  & \\
             \hline
              & 4 &  \\
             \hline
        \end{tabular} \hspace{0.1cm}
        \begin{tabular}{|c|c|c|}
        \hline
             \circled{1} & \circled{3} & 4 \\
             \hline
             \circled{2} & \circled{4} &1 \\
             \hline
             3 & 2 &  \\
             \hline
        \end{tabular} \hspace{0.1cm}
        \begin{tabular}{|c|c|c|}
        \hline
             \circled{1} & 2 &  \\
             \hline
             \circled{2} & \circled{3} & 1\\
             \hline
              & \circled{4} &  \\
             \hline
        \end{tabular}
        \hspace{0.1cm}
        \begin{tabular}{|c|c|c|}
        \hline
             \circled{1} &  &  \\
             \hline
             \circled{2} & \circled{3} & \\
             \hline
              &  & \circled{4} \\
             \hline
        \end{tabular}  \hspace{0.1cm}
        \begin{tabular}{|c|c|c|}
        \hline
             \circled{1} &  &  \\
             \hline
             \circled{2} &  & \\
             \hline
              & \circled{3} & \circled{4} \\
             \hline
        \end{tabular}
        \vspace{0.3cm}

        \begin{tikzpicture}
	\begin{pgfonlayer}{nodelayer}
		\node [style=White] (0) at (0, 1) {};
		\node [style=White] (1) at (-1, 0) {};
		\node [style=White] (2) at (1, 0) {};
		\node [style=White] (3) at (0, -1) {};
		\node [style=White] (4) at (2, 0) {};
		\node [style=White] (5) at (3, 1) {};
		\node [style=White] (6) at (4, 0) {};
		\node [style=White] (7) at (3, -1) {};
		\node [style=White] (8) at (5, 0) {};
		\node [style=White] (9) at (6, 1) {};
		\node [style=White] (10) at (7, 0) {};
		\node [style=White] (11) at (6, -1) {};
		\node [style=White] (12) at (8, 0) {};
		\node [style=White] (13) at (9, 1) {};
		\node [style=White] (14) at (10, 0) {};
		\node [style=White] (15) at (9, -1) {};
		\node [style=White] (16) at (11, 0) {};
		\node [style=White] (17) at (12, 1) {};
		\node [style=White] (18) at (13, 0) {};
		\node [style=White] (19) at (12, -1) {};
	\end{pgfonlayer}
	\begin{pgfonlayer}{edgelayer}
		\draw (0) to (1);
		\draw (1) to (3);
		\draw (0) to (2);
		\draw (2) to (1);
		\draw (4) to (5);
		\draw (5) to (6);
		\draw (6) to (7);
		\draw (4) to (7);
		\draw (11) to (8);
		\draw (8) to (9);
		\draw (9) to (10);
		\draw (12) to (13);
		\draw (13) to (14);
		\draw (16) to (19);
		\draw (17) to (18);
	\end{pgfonlayer}
\end{tikzpicture}
    
        \caption{The cases for contradictions for $\varphi(K_3\square K_3)=4$.
        The cases are listed as Case 1 ($K_3\cup K_1$), Case 2 ($C_4$), Case 3 ($P_4$), Case 4 ($P_3\sqcup K_1$), and Case 5 ($C_2\sqcup C_2$).}
        \label{Figure 10}
    \end{figure}

In Figure~\ref{Figure 10}, we describe the different cases of contradictions we find for a $b$-coloring using 4 colors. What we are interested in is the induced sub-graph of the chosen $b$-vertices. Let's call this induced subgraph $G_B$. Because we want 4 b-vertices, $|G_B|=4$. We can also say, $\omega(K_3\square K_3)=\max\{\omega(K_3),\omega(K_3)\}=\omega(K_3)=3$ and so it follows that $\omega(G_B)\leq 3$. Let the inner graph $K_3$ be the vertices $u_1,u_2,u_3$ and the outer graph be the vertices $v_1,v_2,v_3$ and fix $\omega(G_B)=3$. Without loss of generality, let the clique of 3 b-vertices lie in $(K_3)_{v_1}$ (or lie in the outer graph $\{(u_1)_{v_k}:k\in[3]\}$). The fourth vertex thus cannot be in $(K_3)_{v_1}$, so suppose $(u_k)_{v_j}$ for $k\in[3]$, $j\neq 1$, has the fourth color. This vertex is forced to be adjacent to only $(u_k)_{v_1}$. Thus, if $\omega(G_B)=3$, the degree of our fourth $b$-vertex \textit{must} be 1. This rules out the sub-cases where we have a clique number of 3, and so, the degree of our fourth vertex is 0,2, or 3.

For this case, shown in Case 1 in Figure~\ref{Figure 10}, the contradiction starts with assigning vertices adjacent to the $b$-vertices the fourth color. We see that if we have a complete graph on three vertices, then they must all be in the same row or all in the same column, thus, the number four must be listed 3 distinct times to be adjacent to all the $b$-vertices. But that is not possible as we can only have 2 remaining copies of the inner graph to place our fourth color, as shown in Case 1 of Figure~\ref{Figure 10}.

Then, we claim that $G_B$ has at most 2 components. We'll show that it is impossible to construct an induced subgraph $G_B$ with 3 components. It will follow that we can't construct a subgraph with 4 components, because this would contain a subgraph of 3 components. Let each $(u_n)_{v_n}$ for $n\in[3]$ be a $b$-vertex. These are distinct and disconnected. Without loss of generality, choose a fourth $b$-vertex arbitrarily in $(K_3)_{v_1}$. This vertex, say for example, $(u_2)_{v_1}$, is by definition adjacent to $(u_1)_{v_1}$ and $(u_2)_{v_2}$, and thus has a degree of 2. So, a construction of an induced subgraph with 3 disconnected parts is not possible, and thus, we only need to consider cases with at most 2 disconnected parts. So, we consider induced subgraphs $G_B$ with $\omega(G_B)= 2$ and with at most 2 connected components. These subgraphs are considered in Cases 2--5.

Suppose that we have an induced subgraph of $b$-vertices, $C_4$, as shown in Case 2 of Figure~\ref{Figure 10}. Clearly, if all the colors adjacent to our $b$-vertices are distinct, then the contradiction is immediate, as we cannot properly color the last vertex. So we will show that these colors \textit{must} be distinct. For the sake of contradiction, suppose that the colors are not distinct. That is, for colors $a,b,c,d\in C$, one pair of colors is repeated, or without loss of generality, $a=d$. Consider the following grid coloring.
\begin{center}
\begin{tabular}{|c|c|c|}
        \hline
             \circled{1} & \circled{3} & a \\
             \hline
             \circled{2} & \circled{4} &b \\
             \hline
             d & c &  \\
             \hline
        \end{tabular}
        \quad
        \begin{tabular}{|c|c|c|}
        \hline
             \circled{1} & \circled{3} & 4 \\
             \hline
             \circled{2} & \circled{4} &3 \\
             \hline
             4 & 2 &  \\
             \hline
        \end{tabular}
\end{center}

Since $a$ and $d$ are the only vertices adjacent to the $b$-vertex 1, either one of these must be the color 4. Since $a=d$, we must color them both 4. This forces $b=3$ and $c=2$ for the vertices labeled 2 and 3 to be $b$-vertices (as shown in the right-hand table above). However, with this coloring, the vertex labeled 4 can't be a $b$-vertex, because it is not adjacent to color 1. 
Thus, we have a contradiction. Therefore, $a\neq b\neq c\neq d$ and we have our initial contradiction.

Next, we consider Case 3 (shown third from the left in Figure~\ref{Figure 10}).
In this case, $(w_1)_{v_1}$ is the $b$-vertex with color $1$, $(w_2)_{v_1}$ is the $b$-vertex with color 2, $(w_2)_{v_2}$ is the $b$-vertex with color 3, and $(w_3)_{v_2}$ is the $b$-vertex with color 4.
Observe $(w_1)_{v_2}$ must have color 2, because it is already adjacent to vertices with colors 1,3, and 4.
If $(w_2)_{v_2}$ is a $b$-vertex, it must be adjacent to a vertex with color 1.
The only vertex adjacent to $(w_2)_{v_2}$ which has not already been colored is $(w_2)_{v_3}$.
Thus, we must assign $(w_2)_{v_3}$ the color 1.
The $b$-vertex labeled 4 must be adjacent to the first color, and we cannot fill any of the spaces left with the first color; we have a contradiction.

For Case 4, the vertex $(w_2)_{v_1}$ is a $b$-vertex and thus must be adjacent to color 4. The only vertices that are colorable in the neighborhood of $(w_2)_{v_1}$ are $(w_3)_{v_1}$ and $(w_2)_{v_3}$. These are both adjacent to $(w_3)_{v_3}$, which already has the color 4. So, we can't have a $b$-coloring with 4 colors.

Lastly, in Case 5, our subgraph cannot be properly colored with 4 colors, as the bottom left vertex, $(w_3)_{v_1}$, is already adjacent to all four colors.

Thus, no graph $K_3\square K_3$ can have four $b$-vertices. Thus, $\varphi(K_3\square K_3)=3$ and we obtain $\varphi((S_3\square S_3)^3)=(m+n+1)+3=(3+3+1)+3=10$.

 Hence, if $\varphi(K_3\square K_3)<4$, then $\varphi(K_3\square k_3)\leq 3$. By Proposition \ref{prop2.2}, $\varphi(K_3\square K_3)\ge \max\{\varphi(K_3),\varphi(K_3)\}=\max\{3,3\}=3$. Therefore, $\varphi(K_3\square K_3)=3$.

\begin{figure}[h]
    \centering
    \begin{tabular}{|c|c|c|}
        \hline
            \circled{1} & 5 & 4\\
        \hline
            \circled{2} & 3 & \circled{5}\\
        \hline
            3 & \circled{4} & 1\\
        \hline
            4& 2 & \circled{3} \\
        \hline
    \end{tabular}
    \caption{A $b$-coloring of $K_4\square K_3$ using 5 colors.}
    \label{Figure 11}
\end{figure}

 By Figure~\ref{Figure 11}, $\varphi(K_4\square K_3)=5$ and we obtain $\varphi((S_4\square S_3)^3)=(m+n+1)+3=(4+3+1)+5=13$.

    \begin{figure}[h]
        \centering
        \begin{tabular}{|c|c|c|}
        \hline
             \circled{1} & 6 & 4 \\
             \hline
             \circled{2} & \circled{3} & \circled{6} \\
             \hline
             3 & 1 & \circled{5}  \\
             \hline
             4 & 5 & 1 \\
             \hline
             5 & \circled{4} & 2 \\
             \hline
        \end{tabular}
        
        \caption{A $b$-coloring of $K_5\square K_3$ using 6 colors.}
        \label{Figure 12}
    \end{figure}

     By Figure~\ref{Figure 12}, $\varphi(K_5\square K_3)=6$ and we obtain $\varphi((S_5\square S_3)^3)=(m+n+1)+6=(5+3+1)+6=15$.

     If $n>5$, then observe that $n\ge 3(3-1)=6$, so by Lemma~\ref{kouider_lemma}, $\varphi((S_n\square S_3)^3) = 2n+4$. 
\end{proof}

\begin{rem}\label{rem4.1}
    Using a Python program, we generated 12 b-colorings for $K_3\square K_3$ using 3 colors in 0.029 seconds, 11,384 $b$-colorings for $K_4\square K_3$ using 5 colors in 1.565 seconds, and 570,240 $b$-colorings with 6 colors in 3.48 minutes. Note that many of these $b$-colorings are isomorphic to one another, simply by switching rows or switching columns. It was shown in \cite{Manlove} that finding the $b$-chromatic number of a given graph was NP-Hard and that it was possible to find the $b$-chromatic number of a tree in polynomial time. In \cite{Maffray}, computer searches were ran for $K_5\square K_5$, $K_6\square K_6$, and $K_7\square K_7$; however, the time complexity was not mentioned. The time complexity to find the $b$-chromatic number of any complete graph is $\mathcal{O}(2^n)$ for $K_n\square K_n$. It's also interesting to note that the probability of $K_3\square K_3$ colored to be a $b$-coloring using 3 colors is 0.06\%, the probability of $K_4\square K_3$ colored to be a $b$-coloring using 5 colors is 0.004\%, and the probability of $K_5\square K_3$ colored to be a $b$-coloring using 6 colors is $0.0001\%$.

        For larger $m$, the number of cases grows at the same rate as the number of integer partitions of $n$, making larger examples difficult to compute by hand. For each case, there are sub-cases for different choices made while coloring.
    \end{rem}

\section{Open Problems}

\begin{prob}
    In Remark \ref{rem4.1}, we mentioned the probabilities of a colored graph to be a $b$-coloring. Clearly, $P(G\text{ being a $b$-coloring})=\frac{B(G,k)}{k^{|V(G)|}}$, where $B(G,k)=$\textit{ the number of b-colorings of G with k colors}. Is $B(G,k)$ generalizable? Does there exist a graph where the probability of $b$-coloring existing as $k$ increases does \textit{not} tend to zero?
\end{prob}

\begin{conj}
    \textit{Let $G$ be the \textit{tensor product} of two distinct star graphs $S_n$ and $S_m$. Then, the $b$-chromatic number of $G$ is $m+n+1$.}
\end{conj}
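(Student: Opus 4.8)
The plan is to exploit the bipartite structure of stars to decompose the tensor product explicitly. Write $V(S_n)=\{w_0,\dots,w_n\}$ and $V(S_m)=\{v_0,\dots,v_m\}$ with $w_0,v_0$ the centers, and recall that in the tensor product $(w_a,v_b)\sim(w_c,v_d)$ exactly when $w_a\sim w_c$ \emph{and} $v_b\sim v_d$. Since every star edge joins a center to a leaf, such an adjacency forces the two endpoints to disagree in the center/leaf status of \emph{both} coordinates. Sorting the vertices into the four classes $CC=\{(w_0,v_0)\}$, $CL=\{(w_0,v_j):j\neq 0\}$, $LC=\{(w_i,v_0):i\neq 0\}$, and $LL=\{(w_i,v_j):i,j\neq 0\}$, the only edges run between $CC$ and $LL$ and between $CL$ and $LC$. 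I would therefore first establish the isomorphism
\[
S_n\times S_m\;\cong\; S_{nm}\ \sqcup\ K_{m,n},
\]
where the star $S_{nm}$ has center $(w_0,v_0)$ and the $nm$ leaves $LL$, and $K_{m,n}$ is the complete bipartite graph on the parts $CL$ (size $m$) and $LC$ (size $n$).

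Next I would bound $\varphi$ from above via the $m$-degree. The degree sequence is $nm$ (once), $n$ ($m$ times), $m$ ($n$ times), and $1$ ($nm$ times); assuming $n\ge m$, the $(m+2)$-th largest degree equals $m$, which is less than $m+1$, so $m(S_n\times S_m)=m+1$ and Proposition~\ref{squeeze} gives $\varphi(S_n\times S_m)\le m+1$. Independently one can push further: in either component any $b$-vertex of a color $c$ sees only vertices on the opposite side of a complete bipartite join, and because the two sides of such a join must use disjoint colors, its own side is forced to be monochromatic. Consequently each of the two components can supply a $b$-vertex for at most one color once $k\ge 3$, and a short case analysis along these lines would yield $\varphi(S_n\times S_m)=2$.

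The main obstacle is thus not a computation but a conflict with the target value: the $m$-degree bound $\varphi\le \min(m,n)+1$ already lies strictly below the conjectured $m+n+1$ for all $n,m\ge 1$, so the statement cannot hold under the standard (categorical) tensor product. I would confirm this on the smallest case $S_2\times S_2\cong S_4\sqcup C_4$, where a direct check gives $\varphi=2$. Since the paper lists the tensor, direct, and Kronecker products as if distinct, the natural way to rescue a statement of this flavour is to reexamine which operator is intended; note that the value $m+n+1$ coincides with the clique $K_{m+n+1}$ exhibited inside $(S_n\square S_m)^2$ in Theorem~\ref{thm4.2}, not with any categorical product. Accordingly, I would either restate the conjecture for the appropriate connected operator or replace the claimed value by the one forced by the decomposition above.
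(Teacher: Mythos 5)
There is no paper proof to compare you against: the statement is Conjecture~1 in the Open Problems section, and the paper itself says it ``has not been verified.'' Your blind attempt is therefore not a rederivation but a refutation, and as far as I can check, it is correct. The decomposition is right: in the categorical (tensor $=$ direct $=$ Kronecker) product an edge requires adjacency in \emph{both} coordinates, and since every edge of a star joins the center to a leaf, the only edges of $S_n\times S_m$ run from $(w_0,v_0)$ to the $nm$ vertices $(w_i,v_j)$ with $i,j\neq 0$, and from $\{(w_0,v_j):j\neq 0\}$ to $\{(w_i,v_0):i\neq 0\}$; hence $S_n\times S_m\cong K_{1,nm}\sqcup K_{m,n}$. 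The degree sequence $nm$ (once), $n$ ($m$ times), $m$ ($n$ times), $1$ ($nm$ times) then gives $m(S_n\times S_m)=\min(m,n)+1$, which by Proposition~\ref{squeeze} already rules out the conjectured value $m+n+1$ for all $m,n\ge 1$. Your sharper claim $\varphi=2$ is also correct, though the sketch deserves one more line per component: in $K_{1,nm}$ the center is the only vertex of degree above $1$, so for $k\ge 3$ only its color class can host a $b$-vertex; in $K_{m,n}$ the two sides use disjoint color sets in any proper coloring, so a $b$-vertex on one side forces all remaining $k-1$ colors onto the other side, and for $k\ge 3$ both sides cannot simultaneously satisfy this, so at most one class there has a $b$-vertex. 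Thus at most two of the $k$ classes can have $b$-vertices, forcing $k\le 2$, and $\varphi=\chi=2$; your check on $S_2\times S_2\cong S_4\sqcup C_4$ confirms it. (The hypothesis that the stars be \emph{distinct} changes nothing in this argument.)

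Two contextual points strengthen your diagnosis. First, the product is disconnected, which already conflicts with the paper's standing assumption that all graphs are connected --- a hint that the operator was not pinned down carefully. Second, the paper lists ``the tensor product, the direct product, \ldots\ the Kronecker product'' as though these were distinct operations, although they standardly name the same product; combined with your observation that $m+n+1$ is exactly the order of the clique $K_{m+n+1}$ exhibited inside $(S_n\square S_m)^2$ in Theorem~\ref{thm4.2}, the most plausible reading is that the conjecture's intended operator is some connected product (lexicographic or strong, say), not the categorical one. Your closing recommendation --- either restate the conjecture for the intended operator or replace the claimed value by the one forced by the decomposition --- is the right conclusion, and your computation settles the conjecture in the negative as literally written.
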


\begin{conj}
    \textit{For all graphs $G$ and $H$ and all $k\in\N$,}
    \begin{equation*}
        \varphi((G\square H)^k)\ge\varphi(G^k\square H^k)
    \end{equation*}
\end{conj}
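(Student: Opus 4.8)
The plan is to realize the right-hand graph as a spanning subgraph of the left-hand graph and then to transfer an optimal $b$-coloring across the containment. First I would invoke the standard distance formula for Cartesian products,
$$d_{G\square H}\bigl((u,v),(u',v')\bigr)=d_G(u,u')+d_H(v,v').$$
Both $(G\square H)^k$ and $G^k\square H^k$ carry the common vertex set $V(G)\times V(H)$, so only the edge sets need comparing. Every edge of $G^k\square H^k$ has the form $(u,v)(u,v')$ with $1\le d_H(v,v')\le k$ or $(u,v)(u',v)$ with $1\le d_G(u,u')\le k$; in either case $d_G(u,u')+d_H(v,v')\le k$, so the distance formula places the pair in $(G\square H)^k$. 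Hence $E(G^k\square H^k)\subseteq E((G\square H)^k)$, and $(G\square H)^k$ is exactly $G^k\square H^k$ together with the extra \emph{diagonal} edges $(u,v)(u',v')$ having $u\ne u'$, $v\ne v'$, and $d_G(u,u')+d_H(v,v')\le k$.

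Next I would fix an optimal $b$-coloring $c$ of $G^k\square H^k$ with $t=\varphi(G^k\square H^k)$ colors and a system of $b$-vertices $b_1,\dots,b_t$, and try to build a $b$-coloring of $(G\square H)^k$ with the same $t$ colors. The encouraging half of the argument is that $b$-vertices are robust under adding edges: each $b_i$ already sees all $t-1$ other colors within its row (its copy of $G^k$) and its column (its copy of $H^k$), and passing to $(G\square H)^k$ only enlarges neighbourhoods, so provided the coloring stays proper and $b_i$ retains its color, $b_i$ remains a $b$-vertex. Thus the whole difficulty collapses onto \emph{properness}: the coloring $c$ can violate $(G\square H)^k$ only along diagonal edges, where two vertices lying in different rows and different columns at combined distance at most $k$ happen to receive the same color.

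The crux is therefore to repair the monochromatic diagonal edges while keeping the coloring proper and preserving every $b$-vertex certificate, and this is where the conjecture is genuinely hard, because $\varphi$ is \emph{not} monotone under edge addition and so the spanning containment by itself carries no force. Even the $m$-degree bound is unhelpful: degrees only increase, so $m(G^k\square H^k)\le m((G\square H)^k)$, but this bounds $\varphi(G^k\square H^k)$ from above rather than comparing it with $\varphi((G\square H)^k)$. The naive fix of permuting the colors independently inside each copy $(G^k)_v$ repairs the row edges but breaks the column edges of $H^k$, so no single-axis relabeling suffices and a genuinely two-dimensional recoloring is required. My intended route is induction on $k$, whose base case $k=1$ is the trivial equality $(G\square H)^1=G^1\square H^1$, combined with a Hall- or discharging-type argument on the graph of diagonal conflicts showing that those conflicts can always be eliminated by a recoloring that leaves each $b_i$ adjacent to all other colors. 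Verifying that such a conflict-free recoloring always exists is the step I expect to be the principal obstacle, and it may well require strengthening the hypotheses or isolating an invariant of optimal $b$-colorings that survives the passage to $(G\square H)^k$.
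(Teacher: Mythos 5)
This statement is Conjecture 2 in the paper's Open Problems section: the paper offers no proof of it, so there is no argument of the author's to compare yours against, and your proposal --- as you candidly concede --- does not supply one either. Your structural groundwork is correct: the distance formula $d_{G\square H}\bigl((u,v),(u',v')\bigr)=d_G(u,u')+d_H(v,v')$ does give $E(G^k\square H^k)\subseteq E\bigl((G\square H)^k\bigr)$ on the common vertex set $V(G)\times V(H)$; $b$-vertex certificates do survive edge addition as long as properness and colors are retained; and you correctly diagnose that non-monotonicity of $\varphi$ under edge addition strips the spanning containment of any direct force (this is exactly why the statement is a conjecture rather than an observation). But everything after that is a reformulation, not a proof: the conjecture is essentially \emph{equivalent} to the existence of the conflict-free recoloring you defer, so naming that step as ``the principal obstacle'' leaves the entire mathematical content untouched. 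The proposed induction on $k$ is a label without a mechanism --- the base case $k=1$ is the trivial equality $(G\square H)^1=G^1\square H^1$, but non-monotonicity of $\varphi$ means no relation between consecutive powers on either side of the inequality transfers, so there is no inductive step to run.

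Two concrete points about why the deferred step is genuinely hard, and one piece of slack you did not exploit. First, a diagonal conflict can sit inside the closed neighborhood of a $b$-vertex $b_i$ whose certificate depends on a \emph{unique} neighbor of some color; recoloring that neighbor destroys the certificate, and nothing in an optimal $b$-coloring of $G^k\square H^k$ prevents the conflicts from concentrating exactly there, so a Hall- or discharging-type argument has no evident supply of safe repairs --- indeed for $k$ near $\mathrm{diam}(G)+\mathrm{diam}(H)$ the diagonal conflict graph on each color class is nearly complete and only a global recoloring (not a local repair) can work. Second, you do have unused slack: to witness $\varphi\bigl((G\square H)^k\bigr)\ge t$ it suffices to produce a $b$-coloring with \emph{at least} $t$ colors, so the repair may introduce fresh colors; but each new class then needs its own $b$-vertex adjacent to all others, so this slack is not free either. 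For what it is worth, the paper's own results confirm the conjecture in its star setting: since $S_n^k=K_{n+1}$ for $k\ge 2$, Theorem 4.2 gives, e.g.\ for $k=2$ and $n>m$, $\varphi(S_n^2\square S_m^2)=\varphi(K_{n+1}\square K_{m+1})\le m(K_{n+1}\square K_{m+1})=n+m+1=\varphi\bigl((S_n\square S_m)^2\bigr)$, with the remaining cases checking similarly --- but that is evidence, not proof. As submitted, your text is a correct reduction of the problem plus an honest admission that the reduced problem is open; the conjecture remains open.
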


\begin{prob}
    
Conjecture 1 has not been verified; however, it is strongly believed to be the case. Show that $m+n+1$ is the upper bound for the $b$-chromatic number of $G$. Other products of interest are the lexicographic product of $S_n$ and $S_m$, the strong product of $S_n$ and $S_m$, and the rooted product of $S_n$ and $S_m$. \cite{Jakovac}\cite{Koch}\cite{tensor}

\end{prob}

\begin{prob}What is the relation between the total graph function and the Cartesian product of a graph onto itself? Is the $b$-chromatic number of the Cartesian product of $G\square G$ always greater than or equal to $T(G)$? It is clear that $T(G\square G)\not\cong T(G)\square T(G)$, but is there any other relation we can make?  \cite{sunlet}\cite{Vernold}
\end{prob}

\begin{prob}The power graph of a cycle $C_4$ and the power graph of a star $S_3$ are isomorphic. This becomes problematic when trying to take the inverse of the power graph because the function is not one-to-one. If a graph does not contain the sub-graphs $C_n:n\ge3$  and $S_m:m\ge3$, is there a way to define a "\textit{root graph}" function that, when given $G^p,$ will return $G$? In other words, is it possible to construct a function $\sqrt[\leftroot{-2}\uproot{2}p]{G}$? Intuition suggests this is impossible, but proving this is difficult.\end{prob}

\begin{prob}A star graph is isomorphic to the complete bipartite graph $K_{1,n}$. If we consider $\varphi(K_{1,x}\square K_{1,y}) = y+2$ for $x\ge y$ as a base case, is it possible to find $\varphi(K_{u,x}\square K_{v,y})$ by double induction?\end{prob}

\begin{prob}General bounds for the $b$-chromatic number of the Cartesian product of graphs with girths $\geq 5$ have been considered; however, bounds for girths $\le 4$ have not been found and are considered NP-hard as the girth gets too small to consider all possibilities.
\end{prob}

\begin{prob}
    An interesting problem is to characterize all graphs $G$ such that $\varphi(G)=m(G)$.
\end{prob}

\begin{prob}
    Similar methods, as seen in Theorem~\ref{rookgraphcor}, could be used to find the $b$-chromatic number of $K_n\square K_4$. How does this process generalize?
\end{prob}

\section{Acknowledgments}

This research project was funded by the DePaul University USRP. Special thanks to the students from the MAA NREU program for their contribution to our work. 
This paper was written while E. Dahlen was an undergraduate student at DePaul University. E. Dahlen would like to thank Dr. Emily Barnard, Dr. Sarah Bockting-Conrad, and Dr. Christopher Drupieski for offering their many valuable ideas and suggestions.


\bibliographystyle{amsalpha}
\bibliography{bib}

\providecommand{\bysame}{\leavevmode\hbox to3em{\hrulefill}\thinspace}
\providecommand{\MR}{\relax\ifhmode\unskip\space\fi MR }
\providecommand{\MRhref}[2]{%
  \href{http://www.ams.org/mathscinet-getitem?mr=#1}{#2}
}
\providecommand{\href}[2]{#2}
\begin{thebibliography}{JVN16}

\bibitem[ACJ18]{Ansari}
Nadeem Ansari, R.~S. Chandel, and Rizwana Jamal, \emph{On b-chromatic number of
  prism graph families}, Appl. Appl. Math. \textbf{13} (2018), no.~1, 286--295.
  \MR{3813461}

\bibitem[EK03]{Brice}
Brice Effantin and Hamamache Kheddouci, \emph{The {$b$}-chromatic number of
  some power graphs}, Discrete Math. Theor. Comput. Sci. \textbf{6} (2003),
  no.~1, 45--54. \MR{1979111}

\bibitem[GN18]{Chuan}
Chuan Guo and Mike Newman, \emph{On the b-chromatic number of cartesian
  products}, Discrete Appl. Math. \textbf{239} (2018), 82--93. \MR{3776169}

\bibitem[IM99]{Manlove}
Robert~W. Irving and David~F. Manlove, \emph{The {$b$}-chromatic number of a
  graph}, Discrete Appl. Math. \textbf{91} (1999), no.~1-3, 127--141.
  \MR{1670155}

\bibitem[JK10]{Klavzar}
Marko Jakovac and Sandi Klavžar, \emph{The b-chromatic number of cubic
  graphs}, Graphs and Combinatorics \textbf{26} (2010), 107--118.

\bibitem[JO12]{Omoomi}
Ramin Javadi and Behnaz Omoomi, \emph{On {$b$}-coloring of {C}artesian product
  of graphs}, Ars Combin. \textbf{107} (2012), 521--536. \MR{3013748}

\bibitem[JP12]{Jakovac}
Marko Jakovac and Iztok Peterin, \emph{On the b-chromatic number of some graph
  products}, Studia Scientiarum Mathematicarum Hungarica \textbf{49} (2012),
  no.~2, 156 -- 169.

\bibitem[JVN16]{Vernold}
Vernold~Vivin J., Venkatachalam.M., and Mohanapriya N., \emph{On b-chromatic
  number of some line, middle and total graph families}.

\bibitem[KM02]{qn}
Mekkia Kouider and Maryvonne Mahéo, \emph{Some bounds for the b-chromatic
  number of a graph}, Discrete Mathematics \textbf{256} (2002), no.~1,
  267--277.

\bibitem[KM07]{Mekkia}
\bysame, \emph{The b -chromatic number of the cartesian product of two graphs},
  Studia Scientiarum Mathematicarum Hungarica \textbf{44} (2007), no.~1, 49 --
  55.

\bibitem[KP15]{Koch}
Ivo Koch and Iztok Peterin, \emph{The b-chromatic index of direct product of
  graphs}, Discrete Appl. Math. \textbf{190/191} (2015), 109--117. \MR{3351725}

\bibitem[MB15]{Maffray}
Frédéric Maffray and Artur~Mesquita Barbosa, \emph{On the b-chromatic number
  of the cartesian product of two complete graphs}.

\bibitem[Sam75]{tensor}
E.~Sampathkumar, \emph{On tensor product graphs}, Journal of The Australian
  Mathematical Society - J AUST MATH SOC \textbf{20} (1975).

\bibitem[VV15]{sunlet}
J.~{Vernold Vivin} and M.~Vekatachalam, \emph{On b-chromatic number of sun let
  graph and wheel graph families}, Journal of the Egyptian Mathematical Society
  \textbf{23} (2015), no.~2, 215--218.

\end{thebibliography}

\newpage
\appendix
\section{Total Graphs Appendix}

Shown below is an example of the steps taken in the algorithm for case 1 in Theorem~\ref{thm:line_tot} to achieve a maximal $b$-coloring. This example is taken for $n=5$ and $m=3$, satisfying $n\ge 2(m-1)$.

\begin{tikzpicture}[
every edge/.style = {draw=black,very thick},
 vrtx/.style args = {#1/#2}{%
      circle, draw, thick, fill=white,
      minimum size=5mm, label=#1:#2}, scale=0.5
                    ]
		\node   (0) [vrtx=left/] at (0, 8) {1};
		\node   (1) [vrtx=left/] at (-2, 6) {};
		\node   (2) [vrtx=left/] at (1, 6) {};
		\node   (3) [vrtx=left/] at (2, 6) {};
		\node   (4) [vrtx=left/] at (-5, 8) {};
		\node   (5) [vrtx=left/] at (5, 8) {};
		\node   (6) [vrtx=left/] at (-9, 15) {10};
		\node   (7) [vrtx=left/] at (-13, 13) {};
		\node   (8) [vrtx=left/] at (-9, 13) {};
		\node   (9) [vrtx=left/] at (-5, 13) {};
		\node   (10) [vrtx=left/] at (-14, 15) {};
		\node   (11) [vrtx=left/] at (-4, 15) {};
		\node   (12) [vrtx=left/] at (0, 0) {11};
		\node   (13) [vrtx=left/] at (-3, -2) {};
		\node   (14) [vrtx=left/] at (1, -2) {};
		\node   (15) [vrtx=left/] at (3, -2) {};
		\node   (16) [vrtx=left/] at (-5, 0) {};
		\node   (17) [vrtx=left/] at (5, 0) {};
		\node   (18) [vrtx=left/] at (9, 15) {12};
		\node   (19) [vrtx=left/] at (5, 13) {};
		\node   (20) [vrtx=left/] at (9, 13) {};
		\node   (21) [vrtx=left/] at (13, 13) {};
		\node   (22) [vrtx=left/] at (4, 15) {};
		\node   (23) [vrtx=left/] at (14, 15) {};

        \path
        
		  (0) edge node[midway]{\textcolor{red}{3}}(1)
		  (0) edge node[midway]{\textcolor{red}{4}}(2)
		  (0) edge node[midway]{\textcolor{red}{5}}(3)
		(4) edge node[midway]{\textcolor{red}{2}}(0)
		  (0) edge node[midway]{\textcolor{red}{6}}(5)
		  (6) edge (7)
		  (6) edge (8)
		  (6) edge (9)
		(10) edge (6)
		  (6) edge (11)
		  (12) edge (13)
		  (12) edge (14)
		  (12) edge (15)
		  (16) edge (12)
		  (12) edge (17)
		  (18) edge (19)
	    (18) edge (20)
		(18) edge (21)
	    (22) edge (18)
	    (18) edge (23)
	    (0) edge node[midway]{\textcolor{red}{7}}(6)
	    (0) edge node[midway]{\textcolor{red}{8}}(12)
            (0) edge node[midway]{\textcolor{red}{9}}(18);
\path[loosely dotted] (4) edge (10)
		  (4) edge (16)
		(4) edge (22)
		  (1) edge (7)
		  (1) edge (13)
		  (1) edge (19)
		(2) edge (8)
		  (2) edge (14)
		(2) edge (20)
		(3) edge (9)
		  (3) edge (15)
		  (3) edge (21)
		(5) edge (11)
		(5) edge (17)
		(5) edge (23);
\end{tikzpicture}

\begin{tikzpicture}[
every edge/.style = {draw=black,very thick},
 vrtx/.style args = {#1/#2}{%
      circle, draw, thick, fill=white,
      minimum size=5mm, label=#1:#2}, scale=0.5
                    ]
		\node   (0) [vrtx=left/] at (0, 8) {1};
		\node   (1) [vrtx=left/] at (-2, 6) {};
		\node   (2) [vrtx=left/] at (1, 6) {};
		\node   (3) [vrtx=left/] at (2, 6) {};
		\node   (4) [vrtx=left/] at (-5, 8) {};
		\node   (5) [vrtx=left/] at (5, 8) {};
		\node   (6) [vrtx=left/] at (-9, 15) {10};
		\node   (7) [vrtx=left/] at (-13, 13) {};
		\node   (8) [vrtx=left/] at (-9, 13) {};
		\node   (9) [vrtx=left/] at (-5, 13) {};
		\node   (10) [vrtx=left/] at (-14, 15) {};
		\node   (11) [vrtx=left/] at (-4, 15) {};
		\node   (12) [vrtx=left/] at (0, 0) {11};
		\node   (13) [vrtx=left/] at (-3, -2) {};
		\node   (14) [vrtx=left/] at (1, -2) {};
		\node   (15) [vrtx=left/] at (3, -2) {};
		\node   (16) [vrtx=left/] at (-5, 0) {};
		\node   (17) [vrtx=left/] at (5, 0) {};
		\node   (18) [vrtx=left/] at (9, 15) {12};
		\node   (19) [vrtx=left/] at (5, 13) {};
		\node   (20) [vrtx=left/] at (9, 13) {};
		\node   (21) [vrtx=left/] at (13, 13) {};
		\node   (22) [vrtx=left/] at (4, 15) {};
		\node   (23) [vrtx=left/] at (14, 15) {};

        \path
        
		  (0) edge node[midway]{\textcolor{red}{3}}(1)
		  (0) edge node[midway]{\textcolor{red}{4}}(2)
		  (0) edge node[midway]{\textcolor{red}{5}}(3)
		(4) edge node[midway]{\textcolor{red}{2}}(0)
		  (0) edge node[midway]{\textcolor{red}{6}}(5)
		  (6) edge node[midway]{\textcolor{red}{11}}(7)
		  (6) edge (8)
		  (6) edge (9)
		(10) edge node[midway]{\textcolor{red}{12}}(6)
		  (6) edge (11)
		  (12) edge node[midway]{\textcolor{red}{10}}(13)
		  (12) edge (14)
		  (12) edge (15)
		  (16) edge node[midway]{\textcolor{red}{12}}(12)
		  (12) edge (17)
		  (18) edge node[midway]{\textcolor{red}{10}}(19)
	    (18) edge (20)
		(18) edge (21)
	    (22) edge node[midway]{\textcolor{red}{11}}(18)
	    (18) edge (23)
	    (0) edge node[midway]{\textcolor{red}{7}}(6)
	    (0) edge node[midway]{\textcolor{red}{8}}(12)
            (0) edge node[midway]{\textcolor{red}{9}}(18);
\path[loosely dotted] (4) edge (10)
		  (4) edge (16)
		(4) edge (22)
		  (1) edge (7)
		  (1) edge (13)
		  (1) edge (19)
		(2) edge (8)
		  (2) edge (14)
		(2) edge (20)
		(3) edge (9)
		  (3) edge (15)
		  (3) edge (21)
		(5) edge (11)
		(5) edge (17)
		(5) edge (23);
\end{tikzpicture}

\begin{tikzpicture}[
every edge/.style = {draw=black,very thick},
 vrtx/.style args = {#1/#2}{%
      circle, draw, thick, fill=white,
      minimum size=5mm, label=#1:#2}, scale=0.5
                    ]
		\node   (0) [vrtx=left/] at (0, 8) {1};
		\node   (1) [vrtx=left/] at (-2, 6) {};
		\node   (2) [vrtx=left/] at (1, 6) {};
		\node   (3) [vrtx=left/] at (2, 6) {};
		\node   (4) [vrtx=left/] at (-5, 8) {};
		\node   (5) [vrtx=left/] at (5, 8) {};
		\node   (6) [vrtx=left/] at (-9, 15) {10};
		\node   (7) [vrtx=left/] at (-13, 13) {5};
		\node   (8) [vrtx=left/] at (-9, 13) {6};
		\node   (9) [vrtx=left/] at (-5, 13) {8};
		\node   (10) [vrtx=left/] at (-14, 15) {9};
		\node   (11) [vrtx=left/] at (-4, 15) {1};
		\node   (12) [vrtx=left/] at (0, 0) {11};
		\node   (13) [vrtx=left/] at (-3, -2) {3};
		\node   (14) [vrtx=left/] at (1, -2) {5};
		\node   (15) [vrtx=left/] at (3, -2) {7};
		\node   (16) [vrtx=left/] at (-5, 0) {9};
		\node   (17) [vrtx=left/] at (5, 0) {1};
		\node   (18) [vrtx=left/] at (9, 15) {12};
		\node   (19) [vrtx=left/] at (5, 13) {4};
		\node   (20) [vrtx=left/] at (9, 13) {5};
		\node   (21) [vrtx=left/] at (13, 13) {7};
		\node   (22) [vrtx=left/] at (4, 15) {8};
		\node   (23) [vrtx=left/] at (14, 15) {1};

        \path
        
		  (0) edge node[midway]{\textcolor{red}{3}}(1)
		  (0) edge node[midway]{\textcolor{red}{4}}(2)
		  (0) edge node[midway]{\textcolor{red}{5}}(3)
		(4) edge node[midway]{\textcolor{red}{2}}(0)
		  (0) edge node[midway]{\textcolor{red}{6}}(5)
		  (6) edge node[midway]{\textcolor{red}{11}}(7)
		  (6) edge node[midway]{\textcolor{red}{2}}(8)
		  (6) edge node[midway]{\textcolor{red}{3}}(9)
		(10) edge node[midway]{\textcolor{red}{12}}(6)
		  (6) edge node[midway]{\textcolor{red}{4}}(11)
		  (12) edge node[midway]{\textcolor{red}{10}}(13)
		  (12) edge node[midway]{\textcolor{red}{4}}(14)
		  (12) edge node[midway]{\textcolor{red}{6}}(15)
		  (16) edge node[midway]{\textcolor{red}{12}}(12)
		  (12) edge node[midway]{\textcolor{red}{2}}(17)
		  (18) edge node[midway]{\textcolor{red}{10}}(19)
	    (18) edge node[midway]{\textcolor{red}{3}}(20)
		(18) edge node[midway]{\textcolor{red}{6}}(21)
	    (22) edge node[midway]{\textcolor{red}{11}}(18)
	    (18) edge node[midway]{\textcolor{red}{2}}(23)
	    (0) edge node[midway]{\textcolor{red}{7}}(6)
	    (0) edge node[midway]{\textcolor{red}{8}}(12)
            (0) edge node[midway]{\textcolor{red}{9}}(18);
\path[loosely dotted] (4) edge (10)
		  (4) edge (16)
		(4) edge (22)
		  (1) edge (7)
		  (1) edge (13)
		  (1) edge (19)
		(2) edge (8)
		  (2) edge (14)
		(2) edge (20)
		(3) edge (9)
		  (3) edge (15)
		  (3) edge (21)
		(5) edge (11)
		(5) edge (17)
		(5) edge (23);
\end{tikzpicture}

In step 5, we focus on the skeleton corresponding to each edge of $(S_n)_{v_0}$. Here, we've highlighted that skeleton and colored it as instructed. Repeat this process for each edge in $(S_n)_{v_0}$. Clearly, sometimes we run into a problem where we use 10,11, or 12 when there already is a 10, 11, or 12 in the star $(S_n)_{v_j}$ for $j\neq0$.

\begin{tikzpicture}[
every edge/.style = {draw=black,very thick},
 vrtx/.style args = {#1/#2}{%
      circle, draw, thick, fill=white,
      minimum size=5mm, label=#1:#2}, scale=0.5
                    ]
		\node   (0) [vrtx=left/] at (0, 8) {1};
		\node   (1) [vrtx=left/] at (-2, 6) {};
		\node   (2) [vrtx=left/] at (1, 6) {};
		\node   (3) [vrtx=left/] at (2, 6) {};
		\node   (4) [vrtx=left/] at (-5, 8) {};
		\node   (5) [vrtx=left/] at (5, 8) {};
		\node   (6) [vrtx=left/] at (-9, 15) {10};
		\node   (7) [vrtx=left/] at (-13, 13) {5};
		\node   (8) [vrtx=left/] at (-9, 13) {6};
		\node   (9) [vrtx=left/] at (-5, 13) {8};
		\node   (10) [vrtx=left/] at (-14, 15) {9};
		\node   (11) [vrtx=left/] at (-4, 15) {1};
		\node   (12) [vrtx=left/] at (0, 0) {11};
		\node   (13) [vrtx=left/] at (-3, -2) {3};
		\node   (14) [vrtx=left/] at (1, -2) {5};
		\node   (15) [vrtx=left/] at (3, -2) {7};
		\node   (16) [vrtx=left/] at (-5, 0) {9};
		\node   (17) [vrtx=left/] at (5, 0) {1};
		\node   (18) [vrtx=left/] at (9, 15) {12};
		\node   (19) [vrtx=left/] at (5, 13) {4};
		\node   (20) [vrtx=left/] at (9, 13) {5};
		\node   (21) [vrtx=left/] at (13, 13) {7};
		\node   (22) [vrtx=left/] at (4, 15) {8};
		\node   (23) [vrtx=left/] at (14, 15) {1};

        \path[loosely dotted]

		  (6) edge node[midway]{\textcolor{red}{11}}(7)
		  (6) edge node[midway]{\textcolor{red}{2}}(8)
		  (6) edge node[midway]{\textcolor{red}{3}}(9)
		(10) edge node[midway]{\textcolor{red}{12}}(6)
		  (6) edge node[midway]{\textcolor{red}{4}}(11)
		  (12) edge node[midway]{\textcolor{red}{10}}(13)
		  (12) edge node[midway]{\textcolor{red}{4}}(14)
		  (12) edge node[midway]{\textcolor{red}{6}}(15)
		  (16) edge node[midway]{\textcolor{red}{12}}(12)
		  (12) edge node[midway]{\textcolor{red}{2}}(17)
		  (18) edge node[midway]{\textcolor{red}{10}}(19)
	    (18) edge node[midway]{\textcolor{red}{3}}(20)
		(18) edge node[midway]{\textcolor{red}{6}}(21)
	    (22) edge node[midway]{\textcolor{red}{11}}(18)
	    (18) edge node[midway]{\textcolor{red}{2}}(23)
	    (0) edge node[midway]{\textcolor{red}{7}}(6)
	    (0) edge node[midway]{\textcolor{red}{8}}(12)
            (0) edge node[midway]{\textcolor{red}{9}}(18)
(4) edge (10)
		  (4) edge (16)
		(4) edge (22)
		  (1) edge (7)
		  (1) edge (13)
		  (1) edge (19)
		(2) edge (8)
		  (2) edge (14)
		(2) edge (20)
		(3) edge (9)
		  (3) edge (15)
		  (3) edge (21);
\path  (0) edge node[midway]{\textcolor{red}{3}}(1)
		  (0) edge node[midway]{\textcolor{red}{4}}(2)
		  (0) edge node[midway]{\textcolor{red}{5}}(3)
		(4) edge node[midway]{\textcolor{red}{2}}(0)
		  (0) edge node[midway]{\textcolor{red}{6}}(5)
          (5) edge node[midway]{\textcolor{red}{10}}(11)
		(5) edge node[midway]{\textcolor{red}{11}}(17)
		(5) edge node[midway]{\textcolor{red}{12}}(23);
\end{tikzpicture}

Shown below is an example of the steps taken in the algorithm for case 2 in Theorem~\ref{thm:line_tot} to achieve a maximal $b$-coloring. Here, we use $n=5$ and $m=4$, satisfying $n<2(m-1)$.

\begin{tikzpicture}[
every edge/.style = {draw=black,very thick},
 vrtx/.style args = {#1/#2}{%
      circle, draw, thick, fill=white,
      minimum size=5mm, label=#1:#2}, scale=0.5
                    ]
		\node   (0) [vrtx=left/] at (0, 8) {1};
		\node   (1) [vrtx=left/] at (-2, 6) {};
		\node   (2) [vrtx=left/] at (0, 6) {};
		\node   (3) [vrtx=left/] at (2, 6) {};
		\node   (4) [vrtx=left/] at (-5, 8) {};
		\node   (5) [vrtx=left/] at (5, 8) {};
		\node   (6) [vrtx=left/] at (-9, 15) {7};
		\node   (7) [vrtx=left/] at (-13, 13) {};
		\node   (8) [vrtx=left/] at (-9, 13) {};
		\node   (9) [vrtx=left/] at (-5, 13) {};
		\node   (10) [vrtx=left/] at (-14, 15) {};
		\node   (11) [vrtx=left/] at (-4, 15) {};
		\node   (12) [vrtx=left/] at (-9, 0) {8};
		\node   (13) [vrtx=left/] at (-12, -2) {};
		\node   (14) [vrtx=left/] at (-9, -2) {};
		\node   (15) [vrtx=left/] at (-6, -2) {};
		\node   (16) [vrtx=left/] at (-14, 0) {};
		\node   (17) [vrtx=left/] at (-4, 0) {};
		\node   (18) [vrtx=left/] at (9, 15) {10};
		\node   (19) [vrtx=left/] at (5, 13) {};
		\node   (20) [vrtx=left/] at (9, 13) {};
		\node   (21) [vrtx=left/] at (13, 13) {};
		\node   (22) [vrtx=left/] at (4, 15) {};
		\node   (23) [vrtx=left/] at (14, 15) {};
            \node   (24) [vrtx=left/] at (9, 0) {9};
		\node   (25) [vrtx=left/] at (12, -2) {};
		\node   (26) [vrtx=left/] at (9, -2) {};
		\node   (27) [vrtx=left/] at (6, -2) {};
		\node   (28) [vrtx=left/] at (14, 0) {};
		\node   (29) [vrtx=left/] at (4, 0) {};

        \path
        
		  (0) edge node[midway]{\textcolor{red}{2}}(1)
		  (0) edge node[midway]{\textcolor{red}{3}}(2)
		  (0) edge node[midway]{\textcolor{red}{4}}(3)
		(4) edge node[midway]{\textcolor{red}{5}}(0)
		  (0) edge node[midway]{\textcolor{red}{6}}(5)
		  (6) edge (7)
		  (6) edge (8)
		  (6) edge (9)
		(10) edge (6)
		  (6) edge (11)
		  (12) edge (13)
		  (12) edge (14)
		  (12) edge (15)
		  (16) edge (12)
		  (12) edge (17)
		  (18) edge (19)
	    (18) edge (20)
		(18) edge (21)
	    (22) edge (18)
	    (18) edge (23)
	    (0) edge node[midway]{\textcolor{red}{}}(6)
	    (0) edge node[midway]{\textcolor{red}{}}(12)
            (0) edge node[midway]{\textcolor{red}{}}(18)
             (24) edge (25)
            (24) edge (26)
            (24) edge (27)
            (24) edge (28)
            (24) edge (29)
            (0) edge (24);
\path[loosely dotted] (4) edge (10)
		  (4) edge (16)
		(4) edge (22)
            (4) edge (27)
		  (1) edge (7)
		  (1) edge (13)
		  (1) edge (19)
		(2) edge (8)
		  (2) edge (14)
		(2) edge (20)
            (2) edge (26)
		(3) edge (9)
		  (3) edge (15)
		  (3) edge (21)
            (3) edge (25)
            (5) edge (28)
		(5) edge (11)
		(5) edge (17)
		(5) edge (23);
\end{tikzpicture}

\begin{tikzpicture}[
every edge/.style = {draw=black,very thick},
 vrtx/.style args = {#1/#2}{%
      circle, draw, thick, fill=white,
      minimum size=5mm, label=#1:#2}, scale=0.5
                    ]
		\node   (0) [vrtx=left/] at (0, 8) {1};
		\node   (1) [vrtx=left/] at (-2, 6) {};
		\node   (2) [vrtx=left/] at (0, 6) {};
		\node   (3) [vrtx=left/] at (2, 6) {};
		\node   (4) [vrtx=left/] at (-5, 8) {};
		\node   (5) [vrtx=left/] at (5, 8) {};
		\node   (6) [vrtx=left/] at (-9, 15) {7};
		\node   (7) [vrtx=left/] at (-13, 13) {};
		\node   (8) [vrtx=left/] at (-9, 13) {};
		\node   (9) [vrtx=left/] at (-5, 13) {};
		\node   (10) [vrtx=left/] at (-14, 15) {};
		\node   (11) [vrtx=left/] at (-4, 15) {};
		\node   (12) [vrtx=left/] at (-9, 0) {8};
		\node   (13) [vrtx=left/] at (-12, -2) {};
		\node   (14) [vrtx=left/] at (-9, -2) {};
		\node   (15) [vrtx=left/] at (-6, -2) {};
		\node   (16) [vrtx=left/] at (-14, 0) {};
		\node   (17) [vrtx=left/] at (-4, 0) {};
		\node   (18) [vrtx=left/] at (9, 15) {10};
		\node   (19) [vrtx=left/] at (5, 13) {};
		\node   (20) [vrtx=left/] at (9, 13) {};
		\node   (21) [vrtx=left/] at (13, 13) {};
		\node   (22) [vrtx=left/] at (4, 15) {};
		\node   (23) [vrtx=left/] at (14, 15) {};
            \node   (24) [vrtx=left/] at (9, 0) {9};
		\node   (25) [vrtx=left/] at (12, -2) {};
		\node   (26) [vrtx=left/] at (9, -2) {};
		\node   (27) [vrtx=left/] at (6, -2) {};
		\node   (28) [vrtx=left/] at (14, 0) {};
		\node   (29) [vrtx=left/] at (4, 0) {};

        \path
        
		  (0) edge node[midway]{\textcolor{red}{2}}(1)
		  (0) edge node[midway]{\textcolor{red}{3}}(2)
		  (0) edge node[midway]{\textcolor{red}{4}}(3)
		(4) edge node[midway]{\textcolor{red}{5}}(0)
		  (0) edge node[midway]{\textcolor{red}{6}}(5)
		  (6) edge (7)
		  (6) edge (8)
		  (6) edge (9)
		(10) edge (6)
		  (6) edge (11)
		  (12) edge (13)
		  (12) edge (14)
		  (12) edge (15)
		  (16) edge (12)
		  (12) edge (17)
		  (18) edge (19)
	    (18) edge (20)
		(18) edge (21)
	    (22) edge (18)
	    (18) edge (23)
	    (0) edge node[midway]{\textcolor{red}{11}}(6)
	    (0) edge node[midway]{\textcolor{red}{12}}(12)
            (0) edge node[midway]{\textcolor{red}{13}}(18)
             (24) edge (25)
            (24) edge (26)
            (24) edge (27)
            (24) edge (28)
            (24) edge (29)
            (0) edge (24);
\path[loosely dotted] (4) edge (10)
		  (4) edge (16)
		(4) edge (22)
            (4) edge (27)
		  (1) edge (7)
		  (1) edge (13)
		  (1) edge (19)
		(2) edge (8)
		  (2) edge (14)
		(2) edge (20)
            (2) edge (26)
		(3) edge (9)
		  (3) edge (15)
		  (3) edge (21)
            (3) edge (25)
            (5) edge (28)
		(5) edge (11)
		(5) edge (17)
		(5) edge (23);
\end{tikzpicture}

In step 4, we use the mapping $\phi$ to color the outer vertices of $(S_n)_{v_j}$ such that $j\neq0$.

\begin{tikzpicture}[
every edge/.style = {draw=black,very thick},
 vrtx/.style args = {#1/#2}{%
      circle, draw, thick, fill=white,
      minimum size=5mm, label=#1:#2}, scale=0.5
                    ]
		\node   (0) [vrtx=left/] at (0, 8) {1};
		\node   (1) [vrtx=left/] at (-2, 6) {};
		\node   (2) [vrtx=left/] at (0, 6) {};
		\node   (3) [vrtx=left/] at (2, 6) {};
		\node   (4) [vrtx=left/] at (-5, 8) {};
		\node   (5) [vrtx=left/] at (5, 8) {};
		\node   (6) [vrtx=left/] at (-9, 15) {7};
		\node   (7) [vrtx=left/] at (-13, 13) {2};
		\node   (8) [vrtx=left/] at (-9, 13) {3};
		\node   (9) [vrtx=left/] at (-5, 13) {4};
		\node   (10) [vrtx=left/] at (-14, 15) {5};
		\node   (11) [vrtx=left/] at (-4, 15) {6};
		\node   (12) [vrtx=left/] at (-9, 0) {8};
		\node   (13) [vrtx=left/] at (-12, -2) {2};
		\node   (14) [vrtx=left/] at (-9, -2) {3};
		\node   (15) [vrtx=left/] at (-6, -2) {4};
		\node   (16) [vrtx=left/] at (-14, 0) {5};
		\node   (17) [vrtx=left/] at (-4, 0) {6};
		\node   (18) [vrtx=left/] at (9, 15) {10};
		\node   (19) [vrtx=left/] at (5, 13) {2};
		\node   (20) [vrtx=left/] at (9, 13) {3};
		\node   (21) [vrtx=left/] at (13, 13) {4};
		\node   (22) [vrtx=left/] at (4, 15) {5};
		\node   (23) [vrtx=left/] at (14, 15) {6};
            \node   (24) [vrtx=left/] at (9, 0) {9};
		\node   (25) [vrtx=left/] at (12, -2) {4};
		\node   (26) [vrtx=left/] at (9, -2) {3};
		\node   (27) [vrtx=left/] at (6, -2) {2};
		\node   (28) [vrtx=left/] at (14, 0) {6};
		\node   (29) [vrtx=left/] at (4, 0) {5};

        \path
        
		  (0) edge node[midway]{\textcolor{red}{2}}(1)
		  (0) edge node[midway]{\textcolor{red}{3}}(2)
		  (0) edge node[midway]{\textcolor{red}{4}}(3)
		(4) edge node[midway]{\textcolor{red}{5}}(0)
		  (0) edge node[midway]{\textcolor{red}{6}}(5)
		  (6) edge (7)
		  (6) edge (8)
		  (6) edge (9)
		(10) edge (6)
		  (6) edge (11)
		  (12) edge (13)
		  (12) edge (14)
		  (12) edge (15)
		  (16) edge (12)
		  (12) edge (17)
		  (18) edge (19)
	    (18) edge (20)
		(18) edge (21)
	    (22) edge (18)
	    (18) edge (23)
	    (0) edge node[midway]{\textcolor{red}{11}}(6)
	    (0) edge node[midway]{\textcolor{red}{12}}(12)
            (0) edge node[midway]{\textcolor{red}{13}}(18)
             (24) edge (25)
            (24) edge (26)
            (24) edge (27)
            (24) edge (28)
            (24) edge (29)
            (0) edge (24);
\path[loosely dotted] (4) edge (10)
		  (4) edge (16)
		(4) edge (22)
            (4) edge (27)
		  (1) edge (7)
		  (1) edge (13)
		  (1) edge (19)
		(2) edge (8)
		  (2) edge (14)
		(2) edge (20)
            (2) edge (26)
		(3) edge (9)
		  (3) edge (15)
		  (3) edge (21)
            (3) edge (25)
            (5) edge (28)
		(5) edge (11)
		(5) edge (17)
		(5) edge (23);
\end{tikzpicture}

\begin{tikzpicture}[
every edge/.style = {draw=black,very thick},
 vrtx/.style args = {#1/#2}{%
      circle, draw, thick, fill=white,
      minimum size=5mm, label=#1:#2}, scale=0.5
                    ]
		\node   (0) [vrtx=left/] at (0, 8) {1};
		\node   (1) [vrtx=left/] at (-2, 6) {};
		\node   (2) [vrtx=left/] at (0, 6) {};
		\node   (3) [vrtx=left/] at (2, 6) {};
		\node   (4) [vrtx=left/] at (-5, 8) {};
		\node   (5) [vrtx=left/] at (5, 8) {};
		\node   (6) [vrtx=left/] at (-9, 15) {7};
		\node   (7) [vrtx=left/] at (-13, 13) {2};
		\node   (8) [vrtx=left/] at (-9, 13) {3};
		\node   (9) [vrtx=left/] at (-5, 13) {4};
		\node   (10) [vrtx=left/] at (-14, 15) {5};
		\node   (11) [vrtx=left/] at (-4, 15) {6};
		\node   (12) [vrtx=left/] at (-9, 0) {8};
		\node   (13) [vrtx=left/] at (-12, -2) {2};
		\node   (14) [vrtx=left/] at (-9, -2) {3};
		\node   (15) [vrtx=left/] at (-6, -2) {4};
		\node   (16) [vrtx=left/] at (-14, 0) {5};
		\node   (17) [vrtx=left/] at (-4, 0) {6};
		\node   (18) [vrtx=left/] at (9, 15) {10};
		\node   (19) [vrtx=left/] at (5, 13) {2};
		\node   (20) [vrtx=left/] at (9, 13) {3};
		\node   (21) [vrtx=left/] at (13, 13) {4};
		\node   (22) [vrtx=left/] at (4, 15) {5};
		\node   (23) [vrtx=left/] at (14, 15) {6};
            \node   (24) [vrtx=left/] at (9, 0) {9};
		\node   (25) [vrtx=left/] at (12, -2) {4};
		\node   (26) [vrtx=left/] at (9, -2) {3};
		\node   (27) [vrtx=left/] at (6, -2) {2};
		\node   (28) [vrtx=left/] at (14, 0) {6};
		\node   (29) [vrtx=left/] at (4, 0) {5};

        \path
        
		  (0) edge node[midway]{\textcolor{red}{2}}(1)
		  (0) edge node[midway]{\textcolor{red}{3}}(2)
		  (0) edge node[midway]{\textcolor{red}{4}}(3)
		(4) edge node[midway]{\textcolor{red}{5}}(0)
		  (0) edge node[midway]{\textcolor{red}{6}}(5)
		  (6) edge node[midway]{\textcolor{red}{8}}(7)
		  (6) edge node[midway]{\textcolor{red}{9}}(8)
		  (6) edge node[midway]{\textcolor{red}{10}}(9)
		(10) edge node[midway]{\textcolor{red}{13}}(6)
		  (6) edge node[midway]{\textcolor{red}{12}}(11)
		  (12) edge node[midway]{\textcolor{red}{11}}(13)
		  (12) edge node[midway]{\textcolor{red}{13}}(14)
		  (12) edge node[midway]{\textcolor{red}{7}}(15)
		  (16) edge node[midway]{\textcolor{red}{9}}(12)
		  (12) edge node[midway]{\textcolor{red}{10}}(17)
		  (18) edge node[midway]{\textcolor{red}{7}}(19)
	    (18) edge node[midway]{\textcolor{red}{11}}(20)
		(18) edge node[midway]{\textcolor{red}{12}}(21)
	    (22) edge node[midway]{\textcolor{red}{8}}(18)
	    (18) edge node[midway]{\textcolor{red}{9}}(23)
	    (0) edge node[midway]{\textcolor{red}{11}}(6)
	    (0) edge node[midway]{\textcolor{red}{12}}(12)
            (0) edge node[midway]{\textcolor{red}{13}}(18)
             (24) edge (25)
            (24) edge (26)
            (24) edge (27)
            (24) edge (28)
            (24) edge (29)
            (0) edge (24);
\path[loosely dotted] (4) edge (10)
		  (4) edge (16)
		(4) edge (22)
            (4) edge (27)
		  (1) edge (7)
		  (1) edge (13)
		  (1) edge (19)
		(2) edge (8)
		  (2) edge (14)
		(2) edge (20)
            (2) edge (26)
		(3) edge (9)
		  (3) edge (15)
		  (3) edge (21)
            (3) edge (25)
            (5) edge (28)
		(5) edge (11)
		(5) edge (17)
		(5) edge (23);
\end{tikzpicture}

\begin{tikzpicture}[
every edge/.style = {draw=black,very thick},
 vrtx/.style args = {#1/#2}{%
      circle, draw, thick, fill=white,
      minimum size=5mm, label=#1:#2}, scale=0.5
                    ]
		\node   (0) [vrtx=left/] at (0, 8) {1};
		\node   (1) [vrtx=left/] at (-2, 6) {};
		\node   (2) [vrtx=left/] at (0, 6) {};
		\node   (3) [vrtx=left/] at (2, 6) {};
		\node   (4) [vrtx=left/] at (-5, 8) {};
		\node   (5) [vrtx=left/] at (5, 8) {};
		\node   (6) [vrtx=left/] at (-9, 15) {7};
		\node   (7) [vrtx=left/] at (-13, 13) {2};
		\node   (8) [vrtx=left/] at (-9, 13) {3};
		\node   (9) [vrtx=left/] at (-5, 13) {4};
		\node   (10) [vrtx=left/] at (-14, 15) {5};
		\node   (11) [vrtx=left/] at (-4, 15) {6};
		\node   (12) [vrtx=left/] at (-9, 0) {8};
		\node   (13) [vrtx=left/] at (-12, -2) {2};
		\node   (14) [vrtx=left/] at (-9, -2) {3};
		\node   (15) [vrtx=left/] at (-6, -2) {4};
		\node   (16) [vrtx=left/] at (-14, 0) {5};
		\node   (17) [vrtx=left/] at (-4, 0) {6};
		\node   (18) [vrtx=left/] at (9, 15) {10};
		\node   (19) [vrtx=left/] at (5, 13) {2};
		\node   (20) [vrtx=left/] at (9, 13) {3};
		\node   (21) [vrtx=left/] at (13, 13) {4};
		\node   (22) [vrtx=left/] at (4, 15) {5};
		\node   (23) [vrtx=left/] at (14, 15) {6};
            \node   (24) [vrtx=left/] at (9, 0) {9};
		\node   (25) [vrtx=left/] at (12, -2) {4};
		\node   (26) [vrtx=left/] at (9, -2) {3};
		\node   (27) [vrtx=left/] at (6, -2) {2};
		\node   (28) [vrtx=left/] at (14, 0) {6};
		\node   (29) [vrtx=left/] at (4, 0) {5};

        \path
        
		  (0) edge node[midway]{\textcolor{red}{2}}(1)
		  (0) edge node[midway]{\textcolor{red}{3}}(2)
		  (0) edge node[midway]{\textcolor{red}{4}}(3)
		(4) edge node[midway]{\textcolor{red}{5}}(0)
		  (0) edge node[midway]{\textcolor{red}{6}}(5)
		  (6) edge node[midway]{\textcolor{red}{8}}(7)
		  (6) edge node[midway]{\textcolor{red}{9}}(8)
		  (6) edge node[midway]{\textcolor{red}{10}}(9)
		(10) edge node[midway]{\textcolor{red}{13}}(6)
		  (6) edge node[midway]{\textcolor{red}{12}}(11)
		  (12) edge node[midway]{\textcolor{red}{11}}(13)
		  (12) edge node[midway]{\textcolor{red}{13}}(14)
		  (12) edge node[midway]{\textcolor{red}{7}}(15)
		  (16) edge node[midway]{\textcolor{red}{9}}(12)
		  (12) edge node[midway]{\textcolor{red}{10}}(17)
		  (18) edge node[midway]{\textcolor{red}{7}}(19)
	    (18) edge node[midway]{\textcolor{red}{11}}(20)
		(18) edge node[midway]{\textcolor{red}{12}}(21)
	    (22) edge node[midway]{\textcolor{red}{8}}(18)
	    (18) edge node[midway]{\textcolor{red}{9}}(23)
	    (0) edge node[midway]{\textcolor{red}{11}}(6)
	    (0) edge node[midway]{\textcolor{red}{12}}(12)
            (0) edge node[midway]{\textcolor{red}{13}}(18)
             (24) edge node[midway]{\textcolor{red}{11}}(25)
            (24) edge node[midway]{\textcolor{red}{13}}(26)
            (24) edge node[midway]{\textcolor{red}{8}}(27)
            (24) edge node[midway]{\textcolor{red}{10}}(28)
            (24) edge node[midway]{\textcolor{red}{12}}(29)
            (0) edge node[midway]{\textcolor{red}{7}}(24);
\path[loosely dotted] (4) edge (10)
		  (4) edge (16)
		(4) edge (22)
            (4) edge (27)
		  (1) edge (7)
		  (1) edge (13)
		  (1) edge (19)
		(2) edge (8)
		  (2) edge (14)
		(2) edge (20)
            (2) edge (26)
		(3) edge (9)
		  (3) edge (15)
		  (3) edge (21)
            (3) edge (25)
            (5) edge (28)
		(5) edge (11)
		(5) edge (17)
		(5) edge (23);
\end{tikzpicture}

In step 8, we color $m$ edges at a time based off $c((w_0)_{v_j})$, $j\neq0$. Repeat this process for each edge in $(S_n)_{v_0}$. The edges have no relation to each other, so we can use the same colors each iteration.

\begin{tikzpicture}[
every edge/.style = {draw=black,very thick},
 vrtx/.style args = {#1/#2}{%
      circle, draw, thick, fill=white,
      minimum size=5mm, label=#1:#2}, scale=0.5
                    ]
		\node   (0) [vrtx=left/] at (0, 8) {1};
		\node   (1) [vrtx=left/] at (-2, 6) {};
		\node   (2) [vrtx=left/] at (0, 6) {};
		\node   (3) [vrtx=left/] at (2, 6) {};
		\node   (4) [vrtx=left/] at (-5, 8) {};
		\node   (5) [vrtx=left/] at (5, 8) {};
		\node   (6) [vrtx=left/] at (-9, 15) {7};
		\node   (7) [vrtx=left/] at (-13, 13) {2};
		\node   (8) [vrtx=left/] at (-9, 13) {3};
		\node   (9) [vrtx=left/] at (-5, 13) {4};
		\node   (10) [vrtx=left/] at (-14, 15) {5};
		\node   (11) [vrtx=left/] at (-4, 15) {6};
		\node   (12) [vrtx=left/] at (-9, 0) {8};
		\node   (13) [vrtx=left/] at (-12, -2) {2};
		\node   (14) [vrtx=left/] at (-9, -2) {3};
		\node   (15) [vrtx=left/] at (-6, -2) {4};
		\node   (16) [vrtx=left/] at (-14, 0) {5};
		\node   (17) [vrtx=left/] at (-4, 0) {6};
		\node   (18) [vrtx=left/] at (9, 15) {10};
		\node   (19) [vrtx=left/] at (5, 13) {2};
		\node   (20) [vrtx=left/] at (9, 13) {3};
		\node   (21) [vrtx=left/] at (13, 13) {4};
		\node   (22) [vrtx=left/] at (4, 15) {5};
		\node   (23) [vrtx=left/] at (14, 15) {6};
            \node   (24) [vrtx=left/] at (9, 0) {9};
		\node   (25) [vrtx=left/] at (12, -2) {4};
		\node   (26) [vrtx=left/] at (9, -2) {3};
		\node   (27) [vrtx=left/] at (6, -2) {2};
		\node   (28) [vrtx=left/] at (14, 0) {6};
		\node   (29) [vrtx=left/] at (4, 0) {5};

        \path
        
		  (0) edge node[midway]{\textcolor{red}{2}}(1)
		  (0) edge node[midway]{\textcolor{red}{3}}(2)
		  (0) edge node[midway]{\textcolor{red}{4}}(3)
		(4) edge node[midway]{\textcolor{red}{5}}(0)
		  (0) edge node[midway]{\textcolor{red}{6}}(5)
            (5) edge node[midway]{\textcolor{red}{9}}(28)
		(5) edge node[midway]{\textcolor{red}{7}}(11)
		(5) edge node[midway]{\textcolor{red}{8}}(17)
		(5) edge node[midway]{\textcolor{red}{10}}(23);
		  \path[loosely dotted]
            (6) edge node[midway]{\textcolor{red}{8}}(7)
		  (6) edge node[midway]{\textcolor{red}{9}}(8)
		  (6) edge node[midway]{\textcolor{red}{10}}(9)
		(10) edge node[midway]{\textcolor{red}{13}}(6)
		  (6) edge node[midway]{\textcolor{red}{12}}(11)
		  (12) edge node[midway]{\textcolor{red}{11}}(13)
		  (12) edge node[midway]{\textcolor{red}{13}}(14)
		  (12) edge node[midway]{\textcolor{red}{7}}(15)
		  (16) edge node[midway]{\textcolor{red}{9}}(12)
		  (12) edge node[midway]{\textcolor{red}{10}}(17)
		  (18) edge node[midway]{\textcolor{red}{7}}(19)
	    (18) edge node[midway]{\textcolor{red}{11}}(20)
		(18) edge node[midway]{\textcolor{red}{12}}(21)
	    (22) edge node[midway]{\textcolor{red}{8}}(18)
	    (18) edge node[midway]{\textcolor{red}{9}}(23)
	    (0) edge node[midway]{\textcolor{red}{11}}(6)
	    (0) edge node[midway]{\textcolor{red}{12}}(12)
            (0) edge node[midway]{\textcolor{red}{13}}(18)
             (24) edge node[midway]{\textcolor{red}{11}}(25)
            (24) edge node[midway]{\textcolor{red}{13}}(26)
            (24) edge node[midway]{\textcolor{red}{8}}(27)
            (24) edge node[midway]{\textcolor{red}{10}}(28)
            (24) edge node[midway]{\textcolor{red}{12}}(29)
            (0) edge node[midway]{\textcolor{red}{7}}(24)
            (4) edge (10)
		  (4) edge (16)
		(4) edge (22)
            (4) edge (27)
		  (1) edge (7)
		  (1) edge (13)
		  (1) edge (19)
		(2) edge (8)
		  (2) edge (14)
		(2) edge (20)
            (2) edge (26)
		(3) edge (9)
		  (3) edge (15)
		  (3) edge (21)
            (3) edge (25);
\end{tikzpicture}
\end{document}